\newtheorem{theorem}{Theorem}[section]
\newtheorem{proposition}{Proposition}[section]
\newtheorem{lemma}[theorem]{Lemma}
\newtheorem{corollary}[theorem]{Corollary}
\theoremstyle{definition}
\newtheorem{definition}[theorem]{Definition}
\theoremstyle{remark}
\newtheorem{remark}[theorem]{Remark}
\numberwithin{equation}{section}
\newcommand{\abs}[1]{\lvert#1\rvert}
\definecolor{BlueFonse}{rgb}{0,0,1}
\definecolor{BlueFonse1}{cmyk}{1,0,0,0.7}
\begin{document}

\title[Poincar\'e-Sobolev type inequalities on intrinsic varifolds]{Poincar\'e and Sobolev type inequalities for intrinsic rectifiable varifolds}

\author{Julio Cesar Correa Hoyos}
\address{Instituto de Matem\'atica e estat\'istica, Universidade de S\~ao Paulo, S\~ao Paulo}
\email{jccorrea@ime.usp.br}

\date{January 24, 2020}

\keywords{Metric geometry, calculus of variations, geometric measure theory, analysis on manifolds, first variation of a varifold, Michael-Simon inequality}

\maketitle
\begin{abstract}
We prove a Poincar\'e, and a general Sobolev type inequalities for functions with compact support defined on a $k$-rectifiable varifold $V$ defined on a complete Riemannian manifold with positive injectivity radius and sectional curvature bounded above. Our techniques allow us to consider Riemannian manifolds $(M^n,g)$ with $g$ of class $C^2$ or more regular, avoiding the use of Nash's isometric embedding theorem. Our analysis permits to do some quite important fragments of geometric measure theory also for those Riemannian manifolds carrying a $C^2$ metric $g$, that is not $C^{k+\alpha}$ with $k+\alpha>2$. The class of varifolds we consider are those which first variation $\delta V$ lies in an appropriate Lebesgue space $L^p$ with respect to its weight measure $\|V\|$ with the exponent $p\in\R$ satisfying $p>k$.         
\end{abstract}


\maketitle

\section{introduction} 

The ordinary Sobolev inequality has been studied for many years in the Euclidean case, as well as in the Riemannian case, and its prominent role in the theory of partial differential equations is well known. In \cite{miranda1967diseguaglianze} Miranda obtained a Sobolev inequality for functions defined on minimal graphs, The Mirandas proof follows from the Isoperimetric inequality proved in \cite{FedererFleming} for integer currents and a procedure introduced by De Giorgi in \cite{BombieriMirandaDegiorgi}. Bombieri used a refined version of this new inequality, De Giorgi and Miranda (employing the isoperimetric inequality of \cite{FedererFleming}), to derive gradient bounds for solutions to the minimal surface equation (see \cite{bombieri1969maggiorazione}).\\

 In \cite{SimonMichael}, Michael and Simon prove a general Sobolev type inequality, which proof follows an argument which is, in some aspects, evocative of the potential theory. That inequality is obtained on what might be termed a generalized manifold and in particular, the classical Sobolev inequality, a Sobolev inequality on graphs of weak solutions to the mean curvature equation, and a Sobolev inequality on arbitrary $C^2$ submanifolds of $\mathbb{R}^n$ (of arbitrary co-dimension) are derived.\\

On the other hand, Allard in a pioneering work \cite{Allard} proved a Sobolev type inequality for non-negative, compact supported functions defined on a varifold $V$, whose first variation $\delta V$ lies in an appropriate Lebesgue space endowed with the measure $\|\delta V\|$, by generalization of the Isoperimetric inequality for integer currents in \cite{FedererFleming}. The proof of this inequality given by Allards follows from the monotonicity formula derived from the computation of the first variation of a varifold for a suitable perturbation of a radial vector field.\\

Following the ideas of Michael and Simon, Hoffman and Spruk proved in \cite{Hoffman1974sobolev} a general Sobolev inequality for submanifolds $N$ of a Riemannian manifold $M$, satisfying geometric restrictions involving the volume of $M$, the sectional curvatures and the injectivity radius of $M$. Their proof is inspired in the Michael and Simon work, therefore, is an extrinsic perspective.\\

Since this general Sobolev inequality has been largely studied in different contexts from an extrinsic point of view, a natural question is whether or not this kind of inequality remains valid from an intrinsic point o view. 
For functions with compact support on a varifold $V$ whose first variation $\delta V$ lies in an appropriate Lebesgue space with respect to $\|\delta V\|$.\\

In this paper, we show an intrinsic Riemannian analog to the Allard result, considering a $k$-dimensional varifold $V$ defined in an $n$-dimensional Riemannian manifold $(M^n,g)$ (with $1\le k\le n$) defined intrinsically. We achieve this goal by recovering a monotonicity inequality (instead of monotonicity equality) in this intrinsic Riemannian context, which takes into account the bounds on the geometry of $M$. Then we follow the ideas of Simon and Michael in \cite{SimonMichael} and \cite{Simon}, to get a local version of the desired inequality. Finally, The Sobolev type inequality is then obtained by a standard covering argument.

\subsection{A quick overview} 
We present a natural extension of Allard's work. In fact, instead of define a general varifold on a Riemannian manifold via an isometric embedding (i.e., as a Radon measure on $G_k(i(M))$ where $i:M^n\to U\subset\R^N$ is an isometric embedding), is defined as a nonnegative, real extended valued, Radon measure on $G_k(M^n)$, the Grassmannian manifold whose underlying set is the union of the sets of $k$-dimensional subspaces of $T_xM$ as $x$ varies on $M^n$. This point of view has a consequence: more freedom on the regularity of the metric (i.e., our theory holds even for the case of only $C^2$ metrics, when the Nash embedding does not exists). In fact, there is a gap in the theory of isometric embeddings in $\R^N$, precisely in the case when the metric is $C^2$ but not $C^{2+\alpha}$ with $\alpha>0$ it is not known whether an isometric immersion into Euclidean space exists. On the other hand, whenever the metric is $C^{k,\alpha}$ with $k+\alpha>2$ there exists isometric embeddings of class $C^{k+\alpha}$. If the metric is $C^{k, \alpha}$ with $k+\alpha<2$ then there are isometric embeddings $C^{1+\frac{k+\alpha}2}$. The first theorem is proved with the aid of the "hard implicit function theorem", à la Nash-Moser. The second is proved using the technics of the first paper by John Nash about isometric embeddings, compare \cite{NashIsometricEmbedding1954}. This freedom constitutes, as shown in \cite{NardulliBBMS} and \cite{NardulliOsorioIMRN}, a powerful tool to tackle problems in noncompact Riemannian manifolds of bounded geometry, or compact Riemannian manifolds with variable metric and also, possibly noncompact Riemannian manifolds of bounded geometry and variable metrics.\\

Our main goal is to reproduce Allard's Theorem $7.3$ in this new context along the lines of  the proofs of Theorems $18.5$ and $18.6$ of \cite{Simon}. To do so, as in \cite{Hoffman1974sobolev}, our ambient manifold must satisfy some geometric conditions, namely positive injectivity radius and sectional curvature bounded above, which from now on we refer as \emph{bounded geometry}, see Definition \ref{Def:GeometricConditions}. 
On the other hand, since we are interested in varifolds satisfying the conditions of the celebrated Theorem $8.1$ \cite{Allard} (from now on called \textit{Allard Conditions} \hyperlink{$(AC)$}{$(AC)$}, see Definition \ref{defallardcondition}) is necessary to study the properties of varifolds satisfying these conditions. Roughly speaking, we are interested in varifolds whose first variation has no boundary term, having generalized mean curvature belonging to $L^p$ with respect to $\|\delta V\|$ for some $p>k$, and whose density ratio is close to that of an Euclidean disk in a small ball, and density bounded below far from $0$, $\|V\|$-a.e.  

The main contributions of this paper are:
\begin{enumerate}[$(i)$]
\item to define rectifiable sets, exploiting the local structure of the ambient manifold, and so, define rectifiable varifolds ans in Chapter $4$ Definition... of \cite{Simon}.
\item to give an intrinsic $L^p$-monotonicity formula valid for manifolds with bounded geometry and not only in $\R^N$, without using Nash's isometric embedding Theorem. See Theorem \ref{FWMI}.\item to prove, intrinsically, Poincar\'e and Sobolev type inequalities for $C^1(S)$ non-negative functions defined over a $\Ha^k_g$-rectifiable set $S\subset M$. Compare Theorems \ref{poincare} and \ref{sobolev}.
\item to prove that the best constant $C>0$ in the extrinsic Sobolev inequality \eqref{Eq:IntrinsicSobolevInequality} depends only on the dimension $k$ of $V$, which is a remarkable fact, since one a priori expects that $C$ could depend also on the bounds of the geometry of the ambient manifold $n,k,inj_M,b$. For the meaning of the former constants see Definition \ref{Def:GeometricConditions}. 
\end{enumerate}
The rest of this introduction will describe in more detail the contributions of this paper.
\subsubsection{Monotonicity Inequality}
Given a general varifold $V$, the classic way to deduce a "monotonicity formula" is to compute the first variation of $V$ at a perturbations, by smooth functions, of (Euclidean) radial vector fields, then seems natural to consider vector fields of the form $\gamma(\frac u s)\lp u\nabla u\rp(x)$, where $\gamma(t)$  is a smooth real valued function which vanishes for large values of the variable $t$, and $u(x):=dist_{(M,g)}(x,\xi)$ for some $\xi\in M$.\\
The classic calculation uses the fact that the divergence of  a radial vector field over a $k$ dimensional plane is exactly $k$, which is not true in general Riemannian manifolds, so it is here where the bounded geometry plays a central role, since the Rauch comparison Theorem gives bounds on such quantity. The price to pay in making our monotonicity formula using this comparison geometry argument is that there is no equality anymore, instead we have the following inequality. 
\begin{Res}\label{FWMI}
Let $(M^n,g)$ be a complete Riemannian manifold with $g$ of class at least $C^2$ having bounded geometry  and Levi-Civita connection $\nabla_g$, such that $r_0\cot_b(r_0)>0$, and  let $V\in\V_k(M^n)$ satisfying \hyperlink{$(AC)$}{$(AC)$}, then for any $0<s<r_0$, and $h\in C^1(M)$ non-negative. There exists a constant $c=c(s,b)\in]0,1[$ such that, if we set $u(x)=r_{\xi}(x)=dist_{(M,g)}(x,\xi)$ we have for all $0<s<r_0$

\begin{equation}\label{eqch1.5}
\begin{split}
\frac{d}{ds}\lp\frac{1}{s^k}\int_{B_g(\xi,s)}h(y) d\|V\|(y)\rp & \geq \frac{d}{ds}\int_{G_k(B_g(\xi,s))}h(y)\frac{\left|\nabla^{S^{\bot}}u\right|^2_g}{r_{\xi}^k}dV(y,T)\\
& + \frac{1}{s^{k+1}}\lp\int_{G_k(B_g(\xi,s))}\Ll\nabla h(y),(u\nabla u)(y)\Rl_gdV(y,T)\rp\\
& + \frac{(c(s)-1)}{s}\frac{k}{s^k}\int_{B_g(\xi,s)}h(y)d\|V\|(y)\\
& +  \frac{1}{s^{k+1}}\int_{B_g(\xi,s)}\Ll H_g,h(y)(u\nabla u)(y)\Rl_gd\|V\|(y).
\end{split}
\end{equation}

Here $H_g$ is as in Proposition \ref{firstvariationrepresentation}.
\end{Res}
\subsubsection{Rectifiable varifolds}
In this subsection we define the main object of our investigation, namely rectifiable varifolds. We follow here the approach of Chapter $4$ of \cite{Simon} without treating the rectifiability of general varifolds under conditions of the first variation and the density of their weights. Before to give the following definition it is worth to recall here the classical Rademacher Theorem.
\begin{theorem}[Rademacher]
 If \(f\) is Lipschitz on \(\mathbb{R}^{n},\) then \(f\) is differentiable \(\mathcal{L}^{n}\) -almost everywhere; that is, the gradient \(\nabla f(x)\)
 exists and
\[
\lim _{y \rightarrow x} \frac{f(y)-f(x)-\nabla f(x) \cdot(y-x)}{|y-x|}=0,
\]
for  $\mathcal{L}^{n}$-a . e .$x \in \mathbb{R}^{n}$.
\end{theorem}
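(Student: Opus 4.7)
The plan is to follow the classical three-step argument: show that partial derivatives exist almost everywhere, verify that the resulting candidate gradient is the actual gradient via an integration-by-parts identity, and finally upgrade almost-everywhere Gateaux differentiability to genuine (Fréchet) differentiability using the Lipschitz hypothesis.

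First I would fix a unit vector $v\in S^{n-1}$ and consider the directional derivative $D_v f(x):=\lim_{t\to 0}t^{-1}(f(x+tv)-f(x))$. Restricting $f$ to any line parallel to $v$ yields a Lipschitz function of one variable, which is therefore absolutely continuous and hence differentiable $\mathcal{L}^1$-a.e. on that line. By Fubini's theorem the set $N_v=\{x:D_vf(x)\text{ does not exist}\}$ is $\mathcal{L}^n$-negligible. In particular the partial derivatives $\partial_1 f,\ldots,\partial_n f$ exist almost everywhere and define measurable functions; set $\nabla f(x)=(\partial_1f(x),\ldots,\partial_nf(x))$ on the full-measure set where they exist.

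Next I would prove that $D_v f(x)=\langle\nabla f(x),v\rangle$ for $\mathcal{L}^n$-a.e.\ $x$, for each fixed $v$. The trick is to test against $\varphi\in C_c^\infty(\mathbb{R}^n)$: since $f$ is Lipschitz, the difference quotients $t^{-1}(f(x+tv)-f(x))$ are uniformly bounded and converge pointwise a.e.\ to $D_vf(x)$, so by dominated convergence
\begin{equation*}
\int_{\mathbb{R}^n} D_v f(x)\,\varphi(x)\,dx=-\int_{\mathbb{R}^n} f(x)\,D_v\varphi(x)\,dx=-\sum_{i=1}^n v_i\int_{\mathbb{R}^n} f(x)\,\partial_i\varphi(x)\,dx.
\end{equation*}
Applying the same dominated-convergence argument coordinate-wise rewrites the right-hand side as $\int\langle\nabla f(x),v\rangle\varphi(x)\,dx$. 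Since $\varphi$ is arbitrary, $D_vf(x)=\langle\nabla f(x),v\rangle$ a.e. Choosing a countable dense set $\{v_j\}\subset S^{n-1}$ and intersecting the corresponding full-measure sets, one obtains a single full-measure set $E$ on which the identity holds for every $v_j$ simultaneously.

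The main obstacle, and the step to be careful about, is upgrading this Gateaux-type statement to genuine Fréchet differentiability: we need the error $R(x,y):=f(y)-f(x)-\langle\nabla f(x),y-x\rangle$ to be $o(|y-x|)$ as $y\to x$, uniformly in direction. Here the Lipschitz hypothesis is essential. For $x\in E$ and $y\neq x$ write $v=(y-x)/|y-x|$, pick $v_j$ with $|v-v_j|<\varepsilon$, and estimate
\begin{equation*}
|R(x,y)|\leq |f(x+|y-x|v)-f(x+|y-x|v_j)|+|f(x+|y-x|v_j)-f(x)-|y-x|\langle\nabla f(x),v_j\rangle|+|y-x|\,|\langle\nabla f(x),v-v_j\rangle|.
\end{equation*}
The first and third terms are controlled by $\mathrm{Lip}(f)\varepsilon|y-x|$, while the middle term is $o(|y-x|)$ thanks to Gateaux differentiability along $v_j$. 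A covering/equicontinuity argument based on the fact that $S^{n-1}$ is compact and the family $\{v\mapsto t^{-1}(f(x+tv)-f(x))\}_{t>0}$ is uniformly Lipschitz in $v$ then yields uniformity in direction, proving Fréchet differentiability at every point of $E$. That equicontinuity-in-direction step is where some care is required; everything else is a reduction to one-dimensional Lebesgue differentiation plus Fubini.
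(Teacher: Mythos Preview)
The paper does not actually prove Rademacher's theorem: it merely recalls the statement as a classical result before using it to set up the definition of countably $\mathcal{H}^k_g$-rectifiable sets. So there is no ``paper's own proof'' to compare against.

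Your proposal is the standard, correct argument. The three steps are exactly the classical ones: (i) existence of directional derivatives a.e.\ via one-dimensional absolute continuity plus Fubini; (ii) the identity $D_vf=\langle\nabla f,v\rangle$ a.e.\ via the weak-derivative/integration-by-parts trick and dominated convergence; (iii) the upgrade from Gateaux to Fr\'echet differentiability using equicontinuity in the directional variable. In step (iii) the key observation you correctly isolate is that the Lipschitz bound makes $v\mapsto t^{-1}(f(x+tv)-f(x))$ uniformly Lipschitz in $v$ (with constant $\mathrm{Lip}(f)$, independently of $t$), so pointwise convergence on a countable dense subset of $S^{n-1}$ forces uniform convergence on all of $S^{n-1}$; this is exactly what yields the $o(|y-x|)$ uniformly in direction. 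Your write-up is fine as a sketch; in a full proof you would simply spell out that equicontinuity argument rather than flagging it as a place ``where some care is required''.
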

\begin{definition}[cf. \cite{AmbrosioK}, Definition 5.3, pg. 536]\label{ambrosiorectifiable}
Let $(M^n,g)$ be a complete Riemannian manifold. We say that a Borel set $S\subset M$ is \emph{countable $\Ha^k_g$-rectifiable} if there exists countable many Lipschitz functions $f_j:\R^k\to M^n$ such that
\[
\Ha^k_g\lp S\setminus\bigcup_{j\in\N}f_j(\R^k)\rp=0.
\]
For any $x\in\bigcup_{j\in\N}f_j(\R^k)$ such that $f_j$ is differentiable at $y=f_j^{-1}(x)$, we define the approximate tangent space $\Tan{k}{S}{x}$ as $df_y(\R^k)\le T_yM^n$.
\end{definition}
\begin{remark} Observe that the preceding definition is well posed, since, if $x\in f_j(\R^k)\cap f_i(\R^k)$ and $y_l\in f_l^{-1}(x)$ for $l\in\{i,j\}$, then  $df_{y_i}(\R^k)=df_{y_j}(\R^k)$. The reader could consult \cite{AmbrosioK}, Definition 5.5, pg. 536.
\end{remark}

\begin{definition}[c.f. \cite{Simon} Chapter $4$, pg. 77] Let $S$ be a countably $\Ha^k_g$ -rectifiable,  subset of $\lp M^n,g\rp$ and let $\theta\in L_{loc}^1(S,\Ha^k_g)$. Corresponding to such a pair $(S, \theta)$ we define the rectifiable $k$-varifold $\rv{S}{\theta}$ to be,  the equivalence class of all
pairs $(\tilde{S}, \tilde{\theta}),$ where $\tilde{S}$ is countably $\Ha^k_g$-rectifiable with $\mathcal{H}^{k}_g((S \Delta \widetilde{S}) )=0$ and where $\tilde{\theta}=\theta$, $\mathcal{H}^{k}_g-$ a.e. on $S \cap \tilde{S}$.
\end{definition}

So, we can naturally induce a (general) varifold from a rectifiable one as follows.
\begin{definition}
Given an $\Ha^k_g$-rectifiable varifold $\rv{S}{\theta}$ on $M^n$ there is a corresponding (general) $k$-varifold $V\in\V_k(M^n)$ (also denoted by $\rv{S}{\theta}$),  defined by
$$
V(A):=\rv{S}{\theta}(A)=\Ha_g^k\llcorner\theta(\pi(\left\{\left(x, T_{x} M\right): x \in S_{*}\right\}\cap A)), \quad A \subset G_{k}(M^n),
$$
where $S_{*}$ is the set of $x \in S$ such that $S$ has an approximate tangent space $T_{x} S$ with respect to $\theta$ at $x$. Evidently $\rv{S}{\theta}$, so defined, has weight measure $\|\rv{S}{\theta}\|=\mathcal{H}_g^{k} \llcorner\theta$.
\end{definition}
\subsubsection{Poincar\'e and Sobolev inequalities}
In the special case in which, $V=\rv{S}{\theta}$ is a rectifiable varifold such that the mean vector field belongs to a certain Lebesgue space $L^p$ with $p>k$, the Theorem \eqref{FWMI} implies that, for $h\in C^1(S)$, $h\ge0$,
\[
h(\xi)\le e^{\lp\Lambda+c^*k\rp\rho}\lp\frac1{\omega_k}\rho^k\int_{\B{\xi}{\rho}}hd\|V\|+\int_{B_g(\xi,\rho)}\frac{\lmo\nabla^S_g h\rmo}{r^{k-1}_{\xi}}d\|V\|\rp,
\]
for all $\xi\in\spt\|V\|$ and for all $0<\rho<r_0$. From this, together with an approximation argument and Fubini's Theorem we deduce the following Poincar\'e inequality.
\begin{Res}\label{poincare}
Let $(M^n,g)$ be a complete Riemannian manifold with bounded geometry, let $V:=\rv{S}{\theta}$ a rectifiable varifold satisfying \hyperlink{$(AC)$}{$(AC)$}. Suppose: $h\in C^1(M)$, $h\ge 0$, $B_g(\xi,2\rho)\subset B_g(\xi, r_0)$ for $\xi\in S$ fixed, $\lmo H_g\rmo_g\le\Lambda$ for some $\Lambda>0$, $\theta>1$ $\|V\|$-a.e. in $B_g(\xi,r_0)$ and for some $0<\alpha<1$
\begin{equation}
\|V\|\lp\left\{ x\in B_g\lp\xi,\rho\rp:h(x) > 0\right\}\rp\le\omega_k\lp1-\alpha\rp\rho^k\quad and,\quad e^{\lp\Lambda+c^*k\rp\rho}\le 1+\alpha.
\end{equation} 
Suppose also that, for some constant $\Gamma>0$
\begin{equation}
\|V\|\lp B_g(\xi,2\rho)\rp\le\Gamma\rho^k.
\end{equation}
Then there are constants $\beta:=\beta\lp k,\alpha,r_0,b\rp\in]0,\frac12[$ and $C:=C\lp k,\alpha,r_0,b\rp>0$ such that
\[
\int_{\B{\xi}{2\rho}}h\dv\le C\rho\int_{\B{\xi}{\rho}}\lmo \nabla^S_g h\rmo_g\dv.
\]
Here $H_g$ is as in Proposition \ref{firstvariationrepresentation}.
\end{Res}
Finally, the Sobolev inequality follows from Theorem \eqref{FWMI} again in the special case in which $V=\rv{S}{\theta}$ is a rectifiable varifold such that the mean vector field belongs to a certain Lebesgue space and a standard covering argument (c.f. \cite{Simon} Theorem 3.3 pg. 11).
\begin{Res}\label{sobolev}
Let $(M^n,g)$ be a complete Riemannian manifold with bounded geometry, let $V=\rv{S}{\theta}$ a $k$-rectifiable varifold satisfying (AC). Suppose $h\in C^1_0(M)$ non negative, and $\theta\ge1$ $\|V\|$-a.e. in $S$. Then there exists $C:=C(k)>0$ such that
\begin{equation}\label{Eq:IntrinsicSobolevInequality}
\lp\int_S h^{\frac k{k-1}}\dv\rp^{\frac{k-1}k}\le C\int_S\lp\lmo\nabla^S_g h\rmo_g+h\lp\lmo H_g\rmo_g-c^*k\rp\rp\dv.
\end{equation}
Here $H_g$ is as in Proposition \ref{firstvariationrepresentation}.
\end{Res}


\subsection{Structure of the paper}
 In Section $2$ the definition of general varifold of \cite{Allard} is extended to a complete Riemannian manifold as well as the first variation of them. In Section $3$ Theorem \ref{FWMI} is proved, by testing the first variation of a general varifold with a suitable radial deformation field. Here the bounded geometry assumptions of the ambient manifold, play an essential role. In Section $3$ are also proved several results concerning the monotonicity of the density ratio under $L^p$-type assumptions on the weak mean curvature field $H_g$ of the varifold. Finally, in Section 4 Theorem \ref{poincare} and Theorem \ref{sobolev} are proved, as a consequence of  Theorem \ref{FWMI} and a standard covering argument.

\section{Intrinsic Riemannian Theory of General Varifolds}
\subsection{General Varifolds} 
 
 Now we introduce the notations and concepts relative to varifolds that we need to make a Riemannian intrinsic theory of varifolds. In this respect we closely follow \cite{Allard}, \cite{Simon}, \cite{DeLellis}, \cite{NardulliOsorioIMRN}. In what follows $V$ will always denote a varifold and $dv_g$ the Riemannian measure of $(M^n,g)$.

\begin{definition}\label{defgeneralvarifold}
For any $k,n\in\N$, $n\ge2$, $1\le k\le n-1$, let $M^n$ a $n$-dimensional  manifold. We say that $V$ is a \emph{$k$-dimensional varifold} in $M$, if $V$ is a nonnegative, real extended valued, Radon measure on $G_k(M^n)$ the Grassmannian manifold whose underlying set is the union of the sets $\GRKX$, where $\GRKX$ denotes the set of $k$-dimensional subspaces of $T_xM^n$, as $x$ varies on $M^n$  (compare with section $2.6$ of \cite{Allard}). For every $k\in\{1,...,n-1\}$, we define $\V_k(M^n)$ to be the space of all $k$-dimensional varifolds on $M^n$ endowed with the weak topology induced by $C^0_c(G_k(M^n))$, say the space of continuous compactly supported functions on $G_k(M^n)$ endowed with the compact open topology.  
\end{definition}

\begin{definition}
Let $V\in\mathbf{V}_k(M^n)$, $g$ is a Riemannian metric on $M^n$, we say that the nonnegative Radon measure on $M^n$, $||V||$ is the \textit{weight} of $V$, if $||V||=\pi_{\#}(V)$, here $\pi$ indicates the natural fiber bundle projection $\pi:G_k(M^n)\rightarrow M^n$, $\pi:(x, S)\mapsto x$, for every $(x,S)\in G_k(M^n)$, $x\in M^n$, $S\in\GRKX$,
 $$||V||(A):=V(\pi^{-1}(A)).$$ 
\end{definition} 

\begin{remark} 
Recall that $\pi$ is a proper map because the fibers of the fiber bundle $G_k(M^n)\stackrel{\pi}{\to}M^n$ are compact.
\end{remark}

As the reader has noticed, an abstract varifold can be a quite strange object, because it is hard to work with Borel sets on $G_k(M^n)$ in an operative way (here operative way has to be understate in a sense to be specified later in this section); but in the sequel, we also define the \emph{weight of $V$} which is a Radon measure on $\Sigma$ obtained from $V$ by ignoring the fiber variable. The next theorem illustrates how to "simplify" a varifold in an operative way. This result is a direct application of a well known \textit{disintegration Theorem}, which can be found in \cite{Ambrosio} Theorem $2.28$. However the following is an adaptation of it into the context of our intrinsic varifolds.\\

\begin{theorem}[Disintegration Theorem for Varifolds]\label{DVThm}
Let $(M^n,g)$ a $n$-dimensional Riemannian manifold, let $V\in\mathbf{V}_k(M^n)$, and $\pi:G_k(M^n)\to M$ be the canonical projection onto $M^n$. Then there exists a family of Radon measures $\{\pi_x\}_{x\in M^n}$ such that, the map $x\mapsto\pi_x$ is $||V||$-measurable and the following relations are satisfied
\begin{align}
&\pi_x(B):=\lim_{r\downarrow 0}\frac{V(B_g(x,r)\times B)}{||V||(B(x,r))},\ \quad for\ all\ B\in \mathcal{B}(Gr(k, T_xM^n)),\label{Eq.1DThm}\\
& \pi_x\left(Gr(k,T_xM^n)\setminus\{S:S\subset Tan(M^n,x)\}\right)=0, and\ \pi_x(Gr(k,T_xM^n))=1,\label{Eq.2DThm}\\
&f(x,\cdot)\in L^1(Gr(k,T_xM^n),\pi_x),\ for\ ||V||-a.e.\ x\in M^n,\label{Eq.3DThm}\\
&x\mapsto\int_{Gr(k,T_xM^n)}f(x,S)d\pi_x(S)\in L^1(M^n,||V||),\text{is}\ ||V||-\text{measurable},\label{Eq.4DThm}\\
&\int_{G_k(M^n)}f(x,S)dV(x,T)=\int_{M^n}\left(\int_{Gr(k,T_xM^n)}f(x,S)d\pi_x(S)\right)d||V||(x)\label{Eq.5DThm},
\end{align}
for any $f\in L^1(G_k(M^n),V)$. Moreover, if $\pi_x'$ is any other $||V||$-measurable map satisfying \eqref{Eq.4DThm} and \eqref{Eq.5DThm} for every bounded Borel function with compact support and such that $x\mapsto\pi_x'(Gr(k,T_xM^n))\in L^1(M^n,||V||)$, then $\pi_x=\pi_x'$ for $||V||$-a.e. $x\in M^n$.
\end{theorem}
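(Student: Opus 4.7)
The plan is to deduce the statement directly from the abstract disintegration theorem (Theorem 2.28 of \cite{Ambrosio}) applied to the proper projection $\pi : G_k(M^n) \to M^n$, and then to upgrade the conclusion by exploiting the fiber-bundle structure of $G_k(M^n)$ together with a differentiation theorem on $(M^n,g)$. The first step is to verify the hypotheses of the abstract theorem: $M^n$ is a locally compact separable metric space, $G_k(M^n)$ is a fiber bundle over $M^n$ with compact Grassmannian fibers and therefore is itself locally compact and separable, $\pi$ is continuous and proper, and $\|V\| = \pi_{\#} V$ by definition. The abstract theorem then produces a $\|V\|$-measurable family of probability measures $\{\pi_x\}_{x \in M^n}$ on $G_k(M^n)$ for which \eqref{Eq.5DThm} holds for every bounded Borel function with compact support; the extension to arbitrary $f \in L^1(G_k(M^n),V)$, and hence \eqref{Eq.3DThm} and \eqref{Eq.4DThm}, follows by the usual monotone-class and approximation argument.

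The second step is the support statement \eqref{Eq.2DThm}: the abstract disintegration already guarantees that each $\pi_x$ is concentrated on the fiber $\pi^{-1}(x) = Gr(k, T_x M^n)$ and is a probability measure there, which is the full content of \eqref{Eq.2DThm}. The third step is the Radon-Nikodym-type identity \eqref{Eq.1DThm}. Fixing a local trivialization of the Grassmann bundle over a small geodesic ball (so that the ``cylinder'' $B_g(x,r) \times B$ is unambiguously defined for $r$ small enough), \eqref{Eq.5DThm} applied to the characteristic function of this cylinder yields
\begin{equation*}
V(B_g(x,r) \times B) = \int_{B_g(x,r)} \pi_y(B) \, d\|V\|(y),
\end{equation*}
so that $y \mapsto \pi_y(B)$ is a Radon-Nikodym density with respect to $\|V\|$ of the Radon measure $A \mapsto V(A \times B)$ on $M^n$. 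Dividing by $\|V\|(B_g(x,r))$ and letting $r \downarrow 0$, the claim follows from a standard Lebesgue-Besicovitch differentiation theorem on $(M^n,g)$, which is available thanks to the bounded-geometry hypothesis.

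Finally, the uniqueness clause follows directly from the uniqueness part of Theorem 2.28 of \cite{Ambrosio}: any other $\|V\|$-measurable family $\{\pi_x'\}$ satisfying the stated integrability and the integral identity on bounded Borel functions with compact support must agree with $\{\pi_x\}$ for $\|V\|$-a.e.\ $x \in M^n$. The main technical obstacle I anticipate is the Besicovitch-type differentiation needed for \eqref{Eq.1DThm}: on a general Riemannian manifold such a theorem need not hold for arbitrary Radon measures, but the bounded-geometry assumption (through the uniform local doubling property for small geodesic balls it provides) is precisely what allows the differentiation step to go through uniformly in $x$.
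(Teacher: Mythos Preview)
Your proposal is correct and matches the paper's approach: the paper does not spell out a proof but simply declares the result to be ``a direct application of a well known disintegration Theorem, which can be found in \cite{Ambrosio} Theorem 2.28,'' adapted to the intrinsic setting; your write-up is precisely that adaptation, with the differentiation argument for \eqref{Eq.1DThm} made explicit. One small remark: the theorem as stated does not assume bounded geometry, and you do not actually need it for the differentiation step---the Vitali symmetry property for geodesic balls (which the paper itself invokes later in the proof of Proposition~\ref{firstvariationrepresentation}) already suffices on any Riemannian manifold, so your final paragraph's reliance on bounded geometry is overcautious rather than incorrect.
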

\begin{corollary}
Let $V\in\mathbf{V}_k(M^n)$, with the same notation of the Theorem \ref{DVThm}, the equality
\[
V=||V||\otimes\pi_x,
\]
holds.
\end{corollary}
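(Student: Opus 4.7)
The plan is to reduce the corollary to a direct consequence of the disintegration identity \eqref{Eq.5DThm} together with the uniqueness part of the Riesz representation theorem for Radon measures on the locally compact Hausdorff space $G_k(M^n)$.

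First I would recall the construction of the product measure. Given the $\|V\|$-measurable family $\{\pi_x\}_{x\in M^n}$ produced by Theorem \ref{DVThm}, the symbol $\|V\|\otimes\pi_x$ denotes the Radon measure on $G_k(M^n)$ characterized by the formula
\[
(\|V\|\otimes\pi_x)(f)=\int_{M^n}\left(\int_{Gr(k,T_xM^n)}f(x,S)\,d\pi_x(S)\right)d\|V\|(x),
\]
for every $f\in C^0_c(G_k(M^n))$. The measurability of the inner integral as a function of $x$, required for the outer integral to make sense, is exactly \eqref{Eq.4DThm}, and the integrability in \eqref{Eq.3DThm} guarantees that the inner integral is finite $\|V\|$-a.e. for such $f$ (note that continuous compactly supported functions on $G_k(M^n)$ are bounded and supported over a compact base, and $\pi_x$ is a probability measure by \eqref{Eq.2DThm}).

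Next, applying \eqref{Eq.5DThm} to any $f\in C^0_c(G_k(M^n))\subset L^1(G_k(M^n),V)$ gives
\[
\int_{G_k(M^n)}f(x,S)\,dV(x,S)=\int_{M^n}\left(\int_{Gr(k,T_xM^n)}f(x,S)\,d\pi_x(S)\right)d\|V\|(x)=(\|V\|\otimes\pi_x)(f).
\]
Hence the two Radon measures $V$ and $\|V\|\otimes\pi_x$ induce the same positive linear functional on $C^0_c(G_k(M^n))$. By the uniqueness clause of the Riesz representation theorem on the locally compact Hausdorff space $G_k(M^n)$, they coincide as Radon measures, which yields the claimed identity $V=\|V\|\otimes\pi_x$.

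I do not expect any serious obstacle here: the corollary is essentially a repackaging of \eqref{Eq.5DThm}. The only minor subtlety is making sure that the formula extends from $L^1(V)$ to the class of functions needed to determine a Radon measure, which is handled by restricting attention to $C^0_c(G_k(M^n))$ and invoking Riesz. The uniqueness assertion at the end of Theorem \ref{DVThm} guarantees that the disintegration is well-defined, so that the symbol $\|V\|\otimes\pi_x$ is unambiguous.
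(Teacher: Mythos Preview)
Your argument is correct and is exactly the intended one: the paper states the corollary without proof, treating it as an immediate reformulation of the disintegration identity \eqref{Eq.5DThm}. Your use of Riesz representation to pass from agreement on $C^0_c(G_k(M^n))$ to equality of Radon measures is the standard way to make this precise.
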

\subsection{The First Variation of a Varifold}
According to \cite{Allard}, we associate to each varifold $V$ a vector valued distribution, which depends on the metric $g$ of the ambient space, called the \textit{first variation of $V$}. If this first variation is (as Radon measure measure) absolute continuous with respect to weight measure $\|V\|$ in analogy with the smooth case we define the generalized mean curvature vector $H_g$.
\begin{definition}\label{firstvariation}
Let $(M^n,g )$ a $n$-dimensional Riemannian manifold with Levi-Civita connection $\nabla$, $\X^1_c(M)$ the set of differentiable vector fields on $M$ and $V\in\V_k(M)$ a $k$-dimensional varifold ($k\le n$). We define the first variation of $V$ along the vector field $X\in\X_c^1(M)$ as
\[
\delta V(X):=\int_{G_k(M)}\Ll\nabla X(x)\circ\pr{T},\pr{T}\Rl_g dV(x,T),
\]
where the inner product in the integrand is the one defined in $\operatorname{Hom}\lp T_xM,T_xM\rp$, and $\nabla X:\X(M)\to \X(M)$ such that $\nabla X(Y):=\nabla_YX$. 
\end{definition}
Let $\{\tau_1,\dots,\tau_k\}$ an orthonormal basis of $
T\in\GRKX$ for $x$ given, and $\{\tau_1,\dots,\tau_k,\tau_{k+1},\dots\tau_n\}$ the completion to an orthonormal basis for $T_xM$, then for given $X\in\X^1_c(M)$,
\begin{align*}
\delta V(X)
&=\int_{G_k(M)}\trace\lp\lp\nabla X(x)\circ\pr{T}\rp^*\circ\pr{T}\rp dV(x,T)\notag\\
&=\int_{G_k(M)}\sum_{i=1}^k\Ll\tau_i,\nabla_{\tau_i}X(x)\Rl_gdV(x,T),
\end{align*}
so, we have that, for all $X\in\X^1_c(M)$, the definition \eqref{firstvariation} is equivalent to say,

\begin{equation}\label{generalfirstvariation}
\delta V(X)=\int_{G_k(M)}\divergence_TX(x)dV(x,T),
\end{equation}

where, for given $x$, and $\left\{\tau_1,\dots,\tau_k\right\}$ an orthonormal basis of a fixed $T\in\GRKX$,
\[
\divergence_T X(x)=\sum_{i=1}^k\Ll\tau_i,\nabla_{\tau_i}X(x)\Rl_g.
\]
Then \eqref{generalfirstvariation} give us formula with more geometric meaning than the merely definition, on the other hand, by analogy with the smooth case, we desire to related in some way the first variation of a varifold with a "mean curvature" vector, to do so, let us first analyze the total variation of the first variation.\\
\begin{definition}\label{boundedfirstvariation}
Let $V\in\V_k(M)$ with $(M,g)$ a $n$-dimensional Riemannian manifold, we say that $V$ has locally bounded first variation in $U\subset M$ open set, if for all $W\subset\subset U$ open set, there exists a constant $C:=C(W)$ such that
\[
\lmo \delta V(X)\rmo\le C\|X\|_{L^{\infty}\lp W,V\rp},
\]
for all $X\in\X_c^0(W)$.
\end{definition}
\begin{remark}\label{extensionoffirstvariation}
Notice that the definition of bounded first variation is valid for each $X\in\X_c^0(M)$ while the first variation, $\delta V$, is defined on $\X_c^1(M)$. However, defining an extension $\widetilde{\delta V}:\X_c^0(M)\to\R$ of $\delta V$ as
\[
\widetilde{\delta V}(X):=\lim_{\varepsilon\downarrow 0}\delta V(X_{\varepsilon}),
\]
where $(X^1,\dots,X^n)=X\in\X^0_c(M)$, and $X_{\varepsilon}\in\X^1_c(M)$ is a "$C^1$ approximation" of $X$ defined as follows:\\
Let $\{(\Phi_i,U_i)\}_{i\in\Lambda}$ an atlas of $M$, since we are interested in compactly vector fields we can choose $\Lambda=\{1,\dots, N\}$ such that $\{U_i\}_{i=1}^N$ is an open covering for $\spt X$, and consider $\{\psi\}_{i=1}^{\ell}$ a partition of unity subordinate to $\{U_i\}_{i=1}^N$, then
\[
X_{\varepsilon}(p):=\sum_{i=1}^{\ell}\lp\sum_{j=1}^N\lp\psi_j \left.X^i\rmo_{U_i}\star\varphi^{\varepsilon}\rp(p)\rp\frac{\partial}{\partial x_i},
\]
where $\varphi^{\varepsilon}$ is an standard approximation of the identity in $\R^n$, and $\star$ denotes the usual convolution.\\
Clearly $X_{\varepsilon}$ is independent of the choice of charts and defined on $\spt X$. Furthermore by standard theory of convolutions, we know that,
\[
\psi_j \left.X^i\rmo_{U_i} \star\varphi^{\varepsilon}\to\psi_j\left.X^i\rmo_{U_j}
\]
uniformly in compacts, when $\varepsilon\to 0$. Hence
\[
\|X-X_{\varepsilon}\|_{L^{\infty}}\to0,\quad when\ \varepsilon\to 0,
\]
and $X_{\varepsilon}\in\X_c^1(M)$.\\
On the other hand, given $X^1_{\varepsilon},X^2_{\varepsilon}\in\X_c^1(M)$ as above such that $X^1_{\varepsilon}\to X$ and $X^1_{\varepsilon}\to X$, assume that $V$ has bounded first variation on $M$, then
\[
\lmo\delta V(X^1_{\varepsilon}-X^2_{\varepsilon})\rmo\le C\|X^1_{\varepsilon}-X^1_{\varepsilon}\|_{L^{\infty}}\to 0\quad when\ \varepsilon\to 0,
\]
which implies that 
\[
\widetilde{\delta V}:\X^0_c(M)\to\R
\]
is well defined and $\widetilde{\delta V}\equiv\delta V$ on $\X^1_c(M)$. By abuse os notation we will identify $\widetilde{\delta V}$ with $\delta V$.
\end{remark}
\begin{proposition}\label{firstvariationrepresentation}
Let $V\in\V_k(M^n)$ with locally bounded first variation in $(M^n,g)$. Then the total variation $\|\delta V\|$ is a Radon measure. Furthermore, there exist a $\|V\|$-measurable function $H_g:M\to TM$, and $Z\subset M$ with $\|V\|(Z)=0$, such that
\begin{equation*}
\delta V(X)=-\int_{M^n}\Ll X,H_g\Rl_gd\|V\|+\int_{M^n}\Ll X,\nu\Rl_gd\|\delta V\|_{sing},
\end{equation*}
where $\nu$ is a $\|\delta V\|$-measurable function with $\abs{\nu(x)}_g=1$, and $\|\delta V\|_{sing}=\|\delta V\|\llcorner Z$. We call $H_g$ the generalized mean curvature vector of $V$.  
\end{proposition}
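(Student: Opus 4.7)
The plan is to proceed in three steps: extend $\delta V$ and show that its total variation is a Radon measure; apply a Riesz-type representation to produce the unit direction $\nu$; use the Lebesgue decomposition to split off the mean curvature vector $H_g$ from the singular part.

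First, by Remark \ref{extensionoffirstvariation} the functional $\delta V$ extends to $\X_c^0(M)$ with the local bound $\abs{\delta V(X)}\le C(W)\|X\|_{L^{\infty}(W)}$ for $X$ supported in $W\subset\subset U$. I would define, for each open $U\subset M^n$,
\[
\|\delta V\|(U) := \sup\bigl\{ \delta V(X) : X\in\X_c^0(U),\ \abs{X}_g\le 1\bigr\},
\]
and extend to Borel sets by the Carath\'eodory construction. Local boundedness yields finiteness on relatively compact sets; countable subadditivity follows from a partition of unity argument on the test fields; outer regularity is built into the definition. This gives $\|\delta V\|$ as a Radon measure on $M^n$.

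Second, I would apply the Riesz representation theorem in local charts. Inside a single chart the restriction of $\delta V$ to vector fields supported there is an $\R^n$-valued linear functional of order zero, so the vector-valued Riesz theorem represents it by an $\R^n$-valued Radon measure, equivalently $\|\delta V\|$ together with a Borel unit vector field. By the uniqueness part of Riesz, the local representations agree $\|\delta V\|$-a.e.\ on chart overlaps, so a partition of unity glues them into a single $\|\delta V\|$-measurable section $\nu$ of $TM$ with $\abs{\nu(x)}_g=1$ satisfying
\[
\delta V(X) = \int_{M^n} \Ll X,\nu\Rl_g\, d\|\delta V\|,\qquad X\in\X_c^0(M).
\]

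Finally, I would apply the Lebesgue decomposition of $\|\delta V\|$ with respect to $\|V\|$: write $\|\delta V\| = \sigma\,\|V\| + \|\delta V\|_{sing}$ with $\sigma\in L^1_{loc}(M,\|V\|)$ the Radon-Nikodym density and $\|\delta V\|_{sing}$ concentrated on a Borel set $Z$ with $\|V\|(Z)=0$. Setting $H_g := -\sigma\nu$ on $M^n\setminus Z$ and $H_g := 0$ on $Z$ (the sign chosen so that $H_g$ agrees with the classical inward mean curvature on smooth submanifolds), $H_g$ is $\|V\|$-measurable, and substituting into the formula of step two yields the desired decomposition. The step I expect to be most delicate is the second one: checking that the local Riesz representations genuinely glue to a section of $TM$ rather than merely to a coordinate-dependent collection of vectors. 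The first and third steps are routine consequences of the scalar Riesz representation theorem and the Radon-Nikodym theorem, and use nothing about $V$ beyond the locally bounded first variation hypothesis.
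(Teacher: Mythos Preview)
Your proposal is correct and follows essentially the same three-step approach as the paper: extension of $\delta V$, Riesz representation to produce the unit field $\nu$, then decomposition with respect to $\|V\|$ to define $H_g$. The paper differs only in minor technical choices---it invokes the vector-valued Riesz theorem globally on $\X_c^0(M)$ rather than chart-by-chart, and obtains the density via the Lebesgue differentiation theorem on geodesic balls (appealing to the Vitali symmetry property of metric balls in $(M^n,g)$) rather than the abstract Lebesgue decomposition you use.
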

\begin{proof}
Since by hypothesis $V$ has locally bounded first variation, for all open set $W\ssubset M$
\[
\|\delta V\|(W)=\sup\{\ |\delta V(X)|:X\in\X^0_c(W), |X|\le 1\}\le C<\infty.
\]
Then by the Riesz Representation Theorem  for $\X_c^0(M)$ (where we are identifying $\widetilde{\delta V}$ with $\delta V$) there exists $\nu: M\to TM$ $\|\delta V\|$-measurable function with $|\nu(x)|_g=1$ $\|\delta V\|$-a.e. and for all $X\in\X_c^0(M)$
\[
\delta V(X)=\int_M\Ll X,\nu\Rl_gd\|\delta V\|,
\]
and $\|\delta V\|$ is a Radon measure. This last assertion prove the first part of the theorem as in the Euclidean case. To prove the second part of the theorem we need to make use of the fact that the Vitali symmetry property holds for every metric balls of $(M^n,g)$. This guarantees that we can apply the  Lebesgue Differentiation Theorem to space $B_g(p,r_p)$. Thus, by the  Lebesgue Differentiation Theorem we get 
\[
D_{\|V\|}\|\delta V\|=\lim_{\rho\downarrow 0}\frac{\|\delta V\|\lp B_g(x,\rho)\rp}{\|V\|\lp B_g(x,\rho)\rp}
\]
exists $\|V\|$-a.e and (writing $-H_g(x):=D_{\|V\|}\|\delta V\|(x)\nu(x)$) we are lead to the following formula
\[
\delta V(X)=-\int_M\Ll X(x),H_g(x)\Rl_gd\|V\|(x)+\int\Ll X(x),\nu(x)\Rl_gd\|\delta V\|_{sing},
\]
where $\|\delta V\|_{sing}=\|\delta V\|\llcorner \left\{x:D_{\|V\|}\|\delta V\|=+\infty\right\}$.
\end{proof}

\begin{remark}\label{allardconditionequivalence}
Let $(M^n,g)$ be an $n$-dimensional Riemannian manifold, let $V\in\V_k(M)$ such that for some $C>0$, $p>k$, $0<\rho<inj_{\xi}(M,g)$ for fixed $\xi\in M$ 
\begin{equation}\label{allardcondition}
\delta V(X)\le C\lp\int_{B_g(\xi,\rho)}|X|^{\frac{q}{q-1}}d\|V\|\rp^{\frac{q-1}{q}}.
\end{equation}
Then, by H\"older inequality we have that 
\[
\lmo\delta V(X)\rmo\le C_1\|X\|_{L^{\infty}(\|V\|,\spt X\cap B_g(\xi,\rho))},
\]
where $C_1(\xi,\rho):=C\lp\|V\|\lp \spt\ X\cap B_g(\xi,\rho)\rp\rp^{\frac{q-1}{q}}$. Then $V$ has locally bounded first variation, furthermore,  $\|\delta V\|$ is a Radon measure on $M$.
Roughly speaking, if we  assume that $\spt X\subset A$, for $A$ a given $\|V\|$-negligible set, and take the supremum over all such $X$'s with $\|X\|_{L^{\infty}(\|V\|)}=1$ then by \eqref{allardcondition} we get that 
\[
\|\delta V\|(A)\le C\lp\|V\|(A)\rp^{\frac{q-1}{q}}=0,
\]
which implies that $\|\delta V\|\ll\|V\|$. In fact, technically to prove the preceding equation we need to be more carefull, i.e., we need to take a sequence of open sets $A\subset U_j\subseteq B_g(\xi,\rho),\forall j$ such that $U_j\to A$, then using \eqref{allardcondition} we have that 
$$\delta V(X)\le C\lp\int_{U_j}|X|^{\frac{q}{q-1}}d\|V\|\rp^{\frac{q-1}{q}},\forall X\in\X_c^1(U_j).$$ Then taking limits, it easy to check that 
 \[
\|\delta V\|(A)=\lim_{j\to\infty}\|\delta V\|(U_j)\le\lim_{j\to\infty}C\lp\|V\|(U_j)\rp^{\frac{q-1}{q}}=0.
\]
Hence, the Radon-Nikodym Theorem gives the existence of a $L^1(M^n,\|V\|)$ function $H_g$ such that 
\begin{equation}\label{interior1stvar}
\delta V(X)=\int_{G_k(M)} div_{S}X(y)dV(y,T)=-\int_M\Ll H_g(y),X(y)\Rl_g d\|V\|(y).
\end{equation}
On the other hand, \eqref{allardcondition} means that $\delta V(X)$ is a bounded linear functional on the space $L^q_{loc}(\Gamma(TM), \|V\|)$ of the $L^q_{loc}$ vector fields on $M$ with respect to $\|V\|$. So by well known theorems of measure theory (compare Theorem $6.16$ of \cite{RudinRealAndComplex}) stating that the topological dual of $L^q$ is $L^p$, i.e., $(L^q)^*=L^p$ we have that there exists a function $f\in L^p$ such that $\delta V(X)=\int_{B_g(\xi,\rho)}\left\langle f,X\right\rangle_g$ and the best constant $C>0$ in \eqref{allardcondition} is given by $||f||_{L^p}$.
Combining \eqref{allardcondition} and \eqref{interior1stvar} we get that $f=H_g$ and so that $H_g\in L^p_{loc}(\Gamma(TM), \|V\|)$. Finally, notice that, for all $X\in\X^1_c(M)$ (again using the H\"older inequality)
\[
\delta V(X)\leq\|H_g\|_{L^p(\|V\|,M)}\|X\|_{L^{\frac{q}{q-1}}(\|V\|,M)}.
\]
 
At this stage we have proved that \eqref{allardcondition} implies that $V$ has locally bounded first variation, no singular part and the generalized mean curvature vector field $H_g$, satisfies an $L^p$ condition. 

\end{remark}
\begin{definition}[\hypertarget{$(AC)$}{Allard's Conditions}]\label{defallardcondition}
Given $V$ a $k$-dimensional varifold, we say that $V$ satisfies an \textit{Allard's type condition} for the generalized mean curvature, if $V$ satisfies \eqref{allardcondition}. From now on, denoted as \hyperlink{$(AC)$}{$(AC)$}.
\end{definition}
This kind of conditions will play a key role in the section \eqref{monotonicity} when we study the monotonicity behaviour of the density ratio. 


 \section{Monotonicity and Consequences}
 
The aim of this section is to obtain information about $V$ from their first variation $\delta V(X)$, for an appropriate choice of $X\in\mathfrak{X}^1_0(U)$.  This kind of result is known as \textbf{"Monotonicity Formula"}, and is the step zero in any interior regularity theory.\\


\subsection{Weighted Monotonicity Formulae for Abstrac Varifolds }\label{monotonicity}
 Let $(M,g)$ a Riemannian manifold with Levi-Civita connection $\nabla$  and $V\in\mathbf{V}_k(M)$ satisfying \hyperlink{$(AC)$}{$(AC)$}, fix $\xi\in M$ and assume $inj_{\xi}(M,g)>0$. Choose $r_0>0$ such that $r_0<inj_\xi(M,g)$, and, for fixed $\varepsilon>0$  let $\gamma_{\varepsilon}\in C^1_c(]-\infty,1[)$, such that
\[
\gamma_{\varepsilon}(y):=
\begin{cases}
1\quad if\ y\leq\varepsilon,\\
0\quad if\ y>1,
\end{cases}
\quad
and
\quad
 \gamma'_{\varepsilon}(y)<0\ if\ \varepsilon <y<1,
\]

Then we can consider the radial perturbation 
\[
\tilde{X}_{s,\varepsilon}(x)=\left(\gamma_{\varepsilon}\left(\frac{u(x)}{s}\right)(u\nabla u)\right)(x),\quad for\ 0<|s|<r_0,
\]
where $u(x)=r_\xi(x)=dist_{(M,g)}(x,\xi)$.  Now, let $y\in M$ given, and $T\in\GRKY$. Let $\{e^T_1,\dots,e^T_k\}$ an orthonormal basis for $S$, then
\begin{equation}\label{eqch1.1}
\begin{split}
\divergence_{T}\tilde{X}_{s,\varepsilon}
&=\sum_{i=1}^{k}\Ll e_i^T,\nabla_{e_i^T}\gamma_{\epsilon}\left(\frac{u}{s}\right)(u\nabla u)\Rl_g\notag\\
&=\gamma'_{\varepsilon}\left(\frac{u}{s}\right)\frac{u}{s}\sum_{i=1}^k\Ll e_i^T,\nabla u\Rl_g^2+\gamma_{\epsilon}\left(\frac{u}{s}\right)\sum_{i=1}^k\Ll e_i^T,\nabla_{e_i^T}\left(u\nabla u\right)\Rl_g\notag\\
&=\gamma_{\epsilon}\left(\frac{u}{s}\right)\divergence_{T}(u\nabla u)+\gamma'_{\epsilon}\left(\frac{u}{s}\right)\frac{u}{s}\left|\nabla^{\top} u\right|_g^2,
\end{split}
\end{equation}
where $\nabla^{T} u$ is the orthogonal projection of $\nabla u$ onto $S$. Let $\nabla^{T^{\bot}} u$ denote the orthogonal projection of $\nabla u$ onto $T^{\bot}$, then,
\[
|\nabla^{T} u|_g^2+|\nabla^{T^{\bot}} u|_g^2=|\nabla u|_g^2=1,
\]
since $u$ is a distance function.
Therefore 
\[
\divergence_{T}\tilde X_{s,\varepsilon}=\gamma_{\epsilon}\left(\frac{u}{s}\right)
\divergence_{T}(u\nabla u)+\gamma'_{\epsilon}\left(\frac{u}{s}\right)\frac{u}{s}-\gamma'_{\epsilon}\left(\frac{u}{s}\right)\frac{u}{s}\left|\nabla^{S^{\bot}}u\right|^2_g.
\]
Although this choice is enough to get many useful information, let us consider a general case, which will be used in the sequel. Let $h\in C^1(U)$ a non-negative function, and consider
\begin{equation}\label{radialperturbation}
X_{s,\varepsilon}(y):=h(y)\tilde{X}_s(y)=h(y)\left(\gamma_{\varepsilon}\left(\frac{u(y)}{s}\right)(u\nabla u)\right)(y),\quad for\ 0<|s|<r_0.
\end{equation}
Then, if $\{e_1^S,\dots,e^T_k\}$ is as above,
\begin{equation}\label{eqch1.2}
\begin{split}
\divergence_TX_{s,\varepsilon}(y)
&=\sum_{i=1}^k\Ll e^T_i,\nabla_{e^T_i}h(y)\tilde{X}_s(y)\Rl_g\\
&=h(y)\sum_{i=1}^k\Ll e^T_i,\nabla_{e^T_i}\tilde{X}_s(y)\Rl_g+\sum_{i=1}^k\Ll \lp \nabla_{e^T_i}h(y)\rp e^T_i,\tilde{X}_s(y)\Rl_g\\
&=h(y) \divergence_T\tilde{X}_{s,\varepsilon}+\Ll\nabla^Th(y),\tilde{X}_s(y)\Rl_g.
\end{split}
\end{equation}
By the definition of the first variation, we know that

\begin{align*}
\delta V(X_{s,\varepsilon}):&=\int_{G_k(U)}\divergence_TX_{s,\varepsilon}(y)dV(y,T)\\
&=\int_{G_k(U)}h(y)\divergence_T\tilde{X}_s(y)dV(y,T)+\int_{G_k(U)}\Ll\nabla^T h(y),\tilde{X}_s(y)\Rl_gdV(y,T).
\end{align*}

Then, replacing \eqref{eqch1.1} in \eqref{eqch1.2} and the information above, we have

\begin{equation}\label{eqch1.3}
\begin{split}
\delta V(X_{s,\varepsilon})
& =  \int_{G_k(U)}h(y)\gamma_{\varepsilon}\left(\frac{u(y)}{s}\right)\divergence_{T}(u\nabla u)(y)dV(y,T) \\
& + \int_{G_k(U)}\frac{h(y) u(y)}{s}\gamma_{\varepsilon}'\left(\frac{u(y)}{s}\right)dV(y,T)\\
& -  \int_{G_k(U)}\frac{h(y) u(y)}{s}\gamma_{\varepsilon}'\left(\frac{u(y)}{s}\right)\left|\nabla^{T^{\bot}}u\right|^2_g dV(y,T)\\
& + \int_{G_k(U)}\gamma_{\varepsilon}\left(\frac{u(y)}{s}\right)\Ll\nabla^T h(y),\lp u\nabla u\rp(y)\Rl_gdV(y,T)
\end{split}
\end{equation}

If we compare \eqref{eqch1.3} with the Euclidean case (see Cf. \cite{Simon} (4.14) and (4.24)) we note the lack of a dimensional term, then, in order to have an intrinsic result, we need to compare it in some way with the Euclidean case. To do this we use the Rauch's comparison theorem, applied as in Lemma 3.6 of \cite{Hoffman1974sobolev}, which states:
\begin{lemma}\label{HSLemma}
Let $(M,g)$ be a complete Riemannian manifold, with Levi-Civita connection $\nabla$, let $b\in\R$ such that $Sec_g\leq b$,  assume  $br_0<\pi$. Then
\[
\divergence_T(u\nabla u)(x)\geq ku(x)\cot_b(u(x)),
\]
for all $x\in B_g(\xi,r_0)$.
\end{lemma}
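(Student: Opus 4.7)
The plan is to combine an explicit covariant computation of $\divergence_T(u\nabla u)$ with the Hessian comparison theorem, a standard consequence of Rauch's theorem. The hypotheses $r_0<\operatorname{inj}_\xi(M,g)$ and $br_0<\pi$ guarantee respectively that $u=r_\xi$ is smooth on $B_g(\xi,r_0)\setminus\{\xi\}$ and that we remain strictly inside the first conjugate locus of the constant-curvature-$b$ model space, so the comparison applies on all of $B_g(\xi,r_0)\setminus\{\xi\}$.

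First I would fix $x\in B_g(\xi,r_0)\setminus\{\xi\}$, a $k$-plane $T\subset T_xM$, and an orthonormal basis $\{e_1,\dots,e_k\}$ of $T$. A direct covariant computation using
\[
\nabla_{e_i}(u\nabla u)=\Ll e_i,\nabla u\Rl_g\nabla u+u\,\nabla_{e_i}\nabla u,
\]
together with $\Ll e_i,\nabla_{e_i}\nabla u\Rl_g=\mathrm{Hess}(u)(e_i,e_i)$, yields
\[
\divergence_T(u\nabla u)=\lmo\nabla^T u\rmo_g^2+u\sum_{i=1}^k \mathrm{Hess}(u)(e_i,e_i),
\]
where $\nabla^T u$ is the orthogonal projection of $\nabla u$ onto $T$.

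Next I would invoke the Hessian comparison theorem for the distance function: under $\mathrm{Sec}_g\leq b$ and within the admissible range,
\[
\mathrm{Hess}(u)(v,v)\geq \cot_b(u)\bigl(\lmo v\rmo_g^2-\Ll v,\nabla u\Rl_g^2\bigr),\qquad \forall\,v\in T_xM.
\]
Tracing over $\{e_1,\dots,e_k\}$ gives
\[
\sum_{i=1}^k \mathrm{Hess}(u)(e_i,e_i)\geq \cot_b(u)\bigl(k-\lmo\nabla^T u\rmo_g^2\bigr),
\]
and substituting this back into the previous identity and rearranging produces
\[
\divergence_T(u\nabla u)\geq ku\cot_b(u)+\bigl(1-u\cot_b(u)\bigr)\lmo\nabla^T u\rmo_g^2.
\]

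To conclude I would use the elementary bound $t\cot_b(t)\leq 1$ on $(0,r_0)$, valid whenever $br_0<\pi$: for $b=0$ this is an equality, while for $b>0$ it reduces to the classical inequality $x\cot x<1$ on $(0,\pi)$ applied to $x=\sqrt{b}\,t$. Consequently the correction term $(1-u\cot_b(u))\lmo\nabla^T u\rmo_g^2$ is non-negative and the desired lower bound follows. The only delicate point is invoking the Hessian comparison theorem in the precise intrinsic form above (without an ambient isometric embedding), which is where the bounded-geometry hypothesis does the real work; the rest of the argument is pointwise algebra.
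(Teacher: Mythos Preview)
Your approach via the Hessian comparison theorem is exactly the route the paper invokes (it defers to Lemma~3.6 of Hoffman--Spruck, which is proved in just this way), and your computation is correct for $b\ge 0$.

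There is, however, a genuine gap in the case $b<0$, which you do not address. For $b<0$ one has $t\cot_b(t)=\sqrt{|b|}\,t\,\coth\!\big(\sqrt{|b|}\,t\big)>1$, so the correction term $(1-u\cot_b(u))\lmo\nabla^T u\rmo_g^2$ is \emph{negative} and cannot simply be dropped. In fact the inequality in the lemma is false as stated when $b<0$: in the model space of constant curvature $b<0$, taking $T$ to contain the radial direction $\nabla u$ gives
\[
\divergence_T(u\nabla u)=1+(k-1)\,u\cot_b(u)<k\,u\cot_b(u),
\]
since $u\cot_b(u)>1$. The paper silently sidesteps this by redefining its auxiliary function $c(s,b)$ separately for $b\le 0$ (see the paragraph following the lemma). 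The clean repair within your argument is to note that $\mathrm{Sec}_g\le b\le 0$ implies $\mathrm{Sec}_g\le 0$, rerun the identical computation with $b$ replaced by $0$, and conclude $\divergence_T(u\nabla u)\ge k\,u\cot_0(u)=k$, which is the bound actually used downstream.
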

\begin{remark}\label{comentsHSlemma}
Before to continue, let us give some comments on this Lemma. First to all, recall that $\cot_b(s)$ is defined as 
\[
\cot_b(s)=\frac{cs_b(s)}{sn_b(s)},
\]
where $sn_b(s)$ is the unique solution to 
\begin{equation*}
\begin{cases}
x''(s)+bx(s)&=0\\
x(0)&=0\\
x'(0)&=1,
\end{cases}
\end{equation*}
and $cs_b(s)=sn_b'(s)$.\\

Then, we have three cases
\begin{itemize}
\item[Case 1:] assume $b=0$, then \\\[
s\cot_b(s)=1.
\]
\item[Case 2:] Assume $b>0$, then \\\[
s\cot_b(s)=s\sqrt{b}\cdot\frac{\cos\lp\sqrt{b}s\rp}{\sin\lp\sqrt{b}s\rp}.
\]
An asymptotic analysis when $s\to 0$ gives
\begin{equation}\label{asymptoticcot1}
s\cot_b(s)
\sim1-O(s^2)
\end{equation}
\item[Case 3:] Assume $b<0$, then \\\[
s\cot_b(s)=\sqrt{|b|}s\cdot\frac{\cosh\lp\sqrt{|b|}s\rp}{\sinh\lp\sqrt{|b|}s\rp}.
\]
As in the previous case, an asymptotic analysis when $s\to 0$ gives
\begin{equation}\label{asymptoticcot1}
s\cot_b(s)
\sim1+O(s^2).
\end{equation}

\end{itemize}
\end{remark}

\begin{remark}\label{geometric conditions}
Notice that under the conditions of  Lemma \ref{HSLemma} we can compare the divergence of the radial field $u\nabla u$ in the ambient manifold $M^n$ with the same quantity calculated in a space-form, so, is natural to guess that this is the framework to get some "monotonicity" behaviour (inspired in the results from the euclidean case). Furthermore, in view of Remark \ref{comentsHSlemma} above, we expect to recovery in some sense the euclidean case, this is why from now we will be in the setting of the Lemma \ref{HSLemma}, i.e.
\end{remark}
\begin{definition}[ \hypertarget{$(BG)$}{Bounded Geometry}]\label{Def:GeometricConditions}
We say that  a complete Riemannian manifold $(M^n,g)$ has \textit{bounded geometry} (or satisfy \hyperlink{$(BG)$}{$(BG)$}), if $Sec_g<b$ for some constant $b\in\R$ and for every $\xi\in M$ there is $r_0$  such that $0<r_0<inj_{\xi}(M,g)$ and $r_0b<\pi$. 
\end{definition}

In view of the Remarks \eqref{geometric conditions} and \eqref{comentsHSlemma} we can define $c(s):=c(s,b)$ as:
\[
c(s)=
\begin{cases}
s\sqrt{b}\cot\lp\sqrt{b}s\rp &b>0\\
0&b\le0,
\end{cases}
\]
where $b\in\R$. Then, by the Lemma \eqref{HSLemma},
\[
\divergence_T\lp(u\nabla u)(x)\rp\geq kc(u(x),b),
\]
for all $x\in B_g(\xi,r_0)$, furthermore we can assume $r_0\cot_b(r_0)>0$, since we are interested in small geodesic balls around $\xi$.\\
 Then, substituting in \eqref{eqch1.3} we have
\begin{align*}
\delta V(X_{s,\varepsilon}) & \geq  \int_{G_k(M)}kch(y)\gamma_{\varepsilon}\left(\frac{u(y)}{s}\right)dV(y,T)\notag\\
& + \int_{G_k(M)}\gamma_{\varepsilon}\left(\frac{u(y)}{s}\right)\Ll\nabla^T h(y),\lp u\nabla u\rp(y)\Rl_gdV(y,T)\notag\\
& + \int_{G_k(M)}\frac{h(y) u(y)}{s}\gamma_{\varepsilon}'\left(\frac{u(y)}{s}\right)dV(y,T)\notag\\
& -  \int_{G_k(M)}\frac{h(y) u(y)}{s}\gamma_{\varepsilon}'\left(\frac{u(y)}{s}\right)\left|\nabla^{T^{\bot}}u\right|^2_gdV(y,T).
\end{align*}

On the other hand, since$c(s)$ is decrrasing,  $c(u(x))\ge c(s)\ge c(r_0)$ whenever $0\le u(x)\le s\le r_0=r_0(b)$. Hence, \\

\begin{align*}
-\int_{G_k(M)}\left[hk\gamma_{\varepsilon}\left(\frac{u}{s}\right)+\frac{h u}{s}\gamma_{\varepsilon}'\left(\frac{u}{s}\right)\right]dV
&\geq   \int_{G_k(M)}-\frac{h u}{s}\gamma_{\varepsilon}'\left(\frac{u}{s}\right)\left|\nabla^{T^\bot}u\right|^2_gdV\\
& +  \int_{G_k(M)}\gamma_{\varepsilon}\lp\frac{u}{s}\rp\Ll\nabla^T h,(u\nabla u)\Rl_gdV\\
& +  (c(s)-1)\int_{G_k(M)}hk\gamma_{\varepsilon}\left(\frac{u}{s}\right)dV\\
& - \delta V(X_{s,\varepsilon}),
\end{align*}

now, dividing by $s^{k+1}$,

\begin{align*}
-\int_{G_k(M)}h(y)I(s)dV(y,T)
& \geq  \int_{G_k(M)}\frac{h(y)}{s^{k}}\frac{\partial}{\partial s}\lp\gamma_{\varepsilon}\left(\frac{u(y)}{s}\rp\right)\left|\nabla^{S^{\bot}}u\right|^2_gdV(y,T)\\
& + \frac{1}{s^{k+1}}\lp\int_{G_k(M)}\gamma_{\varepsilon}\lp\frac{u(y)}{s}\rp\Ll\nabla^T h(y),(u\nabla u)(y)\Rl_gdV(y,T)\rp\\
& + \frac{(c(s)-1)}{s}\frac{k}{s^k}\int_{G_k(M)}h(y)\gamma_{\varepsilon}\left(\frac{u(y)}{s}\right)dV(y,T)\\
& -  \frac{\delta V(X_{s,\varepsilon})}{s^{k+1}},
\end{align*}

where
\begin{align*}
I(s):&=\frac{k}{s^{k+1}}\gamma_{\varepsilon}\left(\frac{u(y)}{s}\right)+\frac{ u(y)}{s^{k+2}}\gamma_{\varepsilon}'\left(\frac{u(y)}{s}\right)\\
&=-\frac d{ds}\lp\frac 1{s^k}\rp\gamma_{\varepsilon}\lp\frac {u(y)}s\rp-\frac1{s^k}\frac d{ds}\lp\gamma_{\varepsilon}'\lp\frac{u(y)}{s}\rp\rp\\
&=-\frac d{ds}\lp \frac1{s^k}\gamma_{\varepsilon}\lp\frac{u(y)}s\rp\rp.
\end{align*}

Differentiating under the sign of integral we have

\begin{align*}
\frac{d}{ds}\lp\frac{1}{s^k}\int_{G_k(M)}h\gamma_{\varepsilon}\lp\frac{u}{s}\rp dV\rp & \geq  \int_{G_k(M)}\frac{h}{s^{k}}\frac{\partial}{\partial s}\lp\gamma_{\varepsilon}\left(\frac{u}{s}\rp\right)\left|\nabla^{T^{\bot}}u\right|^2_gdV\\
& + \frac{1}{s^{k+1}}\lp\int_{G_k(M)}\gamma_{\varepsilon}\lp\frac{u}{s}\rp\Ll\nabla^T h,(u\nabla u)\Rl_gdV\rp\\
& + \frac{(c(s)-1)}{s}\frac{k}{s^k}\int_{G_k(M)}h\gamma_{\varepsilon}\left(\frac{u}{s}\right)dV\\
& -  \frac{\delta V(X_{s,\varepsilon})}{s^{k+1}}.
\end{align*}

Now, by Theorem \eqref{DVThm}, 
\[
\int_{G_k(M)}h(y)\gamma_{\varepsilon}\lp\frac{u(y)}{s}\rp dV(y,T)=\int_{M}h(y)\gamma_{\varepsilon}\lp\frac{u(y)}{s}\rp d\|V\|(y)
\]
and
\[
\int_{G_k(M)}h(y)k\gamma_{\varepsilon}\left(\frac{u(y)}{s}\right)dV(y,T)=\int_{M}h(y)k\gamma_{\varepsilon}\left(\frac{u(y)}{s}\right)d\|V\|(y).
\]
Thus, 
\begin{equation}\label{eqch1.4}
\begin{split}
\frac{d}{ds}\lp\frac{1}{s^k}\int_{M}h\gamma_{\varepsilon}\lp\frac{u}{s}\rp d\|V\|\rp 
& \geq  \int_{G_k(M)}\frac{h}{s^{k}}\frac{\partial}{\partial s}\lp\gamma_{\varepsilon}\left(\frac{u}{s}\rp\right)\left|\nabla^{T^{\bot}}u\right|^2_gdV\\
& + \frac{1}{s^{k+1}}\lp\int_{G_k(M)}\gamma_{\varepsilon}\lp\frac{u}{s}\rp\Ll\nabla^T h,(u\nabla u)\Rl_gdV\rp\\
& + \frac{(c(s)-1)}{s}\frac{k}{s^k}\int_{G_k(M)}h\gamma_{\varepsilon}\left(\frac{u}{s}\right)d\|V\|\\
& -  \frac{\delta V(X_{s,\varepsilon})}{s^{k+1}}.
\end{split}
\end{equation}
\normalsize
\begin{remark}\label{behaivourc}
Notice that, by the Remark \eqref{comentsHSlemma} the behavior of $c(s):=c(s,b)$, we have that
\begin{equation*}\label{eqch1c*}
\frac{(c(s)-1)}{s}=-O(s)=O(s),\quad as\ s\to 0^+
\end{equation*}
furthermore, this is decreasing, and
\[
\frac{(c(s)-1)}{s}\geq\frac{(c(r_0)-1)}{r_0}:=c^*_1
\]
\end{remark}
\begin{theorem}[Fundamental Weighted Monotonicity Inequality]\label{wightedmonotonicitytheorem}\label{fundamentalinequality}
Let $(M^n,g)$ be a complete Riemannian Manifold with Levi-Civita connection $\nabla$, satisfying \hyperlink{$(BG)$}{$(BG)$}

such that $r_0\cot_b(r_0)>0$, and  let $V\in\V_k(M^n)$ satisfying \hyperlink{$(AC)$}{$(AC)$},

then for any $0<s<r_0$. There exists a constant $c=c(s,b)\in]0,1[$ such that, if we set $u(x)=r_{\xi}(x)=dist_{(M,g)}(x,\xi)$ we have for all $0<s<r_0$

\begin{equation}\label{eqch1.5}
\begin{split}
\frac{d}{ds}\lp\frac{1}{s^k}\int_{B_g(\xi,s)}h d\|V\|\rp 
&  \geq \frac{1}{s^{k+1}}\lp\int_{G_k(B_g(\xi,s))}\Ll\nabla^T h,(u\nabla u)\Rl_gdV\rp\\
& + \frac{(c(s)-1)}{s}\frac{k}{s^k}\int_{B_g(\xi,s)}hd\|V\|\\
& +  \frac{1}{s^{k+1}}\int_{B_g(\xi,s)}\Ll H,h(u\nabla u)\Rl_gd\|V\|\\
& + \frac{d}{ds}J(s),
\end{split}
\end{equation}
where 
\[
J(s):=\int_{G_k(B_g(\xi,s))}h(y)\frac{\left|\nabla^{T^\bot}u\right|^2_g}{r_{\xi}^k}dV(y,T).
\]

\end{theorem}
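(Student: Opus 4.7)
The plan is to pass to the limit $\varepsilon\downarrow 0$ in the intermediate inequality \eqref{eqch1.4}, which has already been derived from the first-variation identity applied to the radial cutoff field $X_{s,\varepsilon}=h\gamma_\varepsilon(u/s)\,u\nabla u$ combined with the Hoffman--Spruck comparison bound of Lemma \ref{HSLemma}. What remains is to identify, after taking that limit, the four terms on the right-hand side of \eqref{eqch1.5}. First I invoke \hyperlink{$(AC)$}{$(AC)$} and Remark \ref{allardconditionequivalence}, which provide a generalized mean curvature $H_g \in L^p_{loc}(\|V\|)$ with $p>k$ and the representation $\delta V(X)=-\int_M\Ll X,H_g\Rl_g\,d\|V\|$ for all $X \in \X^0_c(M)$. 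Substituting $X=X_{s,\varepsilon}$ rewrites the $\delta V$-term of \eqref{eqch1.4} as
\[
-\frac{\delta V(X_{s,\varepsilon})}{s^{k+1}} = \frac{1}{s^{k+1}}\int_M h(y)\gamma_\varepsilon(u(y)/s)\Ll H_g(y),(u\nabla u)(y)\Rl_g\,d\|V\|(y),
\]
which in the limit matches the third term on the right of \eqref{eqch1.5}.

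For the remaining terms of \eqref{eqch1.4} weighted only by $\gamma_\varepsilon(u/s)$ (and not by $\gamma'_\varepsilon$), dominated convergence applies: the integrands are uniformly controlled on $B_g(\xi,r_0)$ by fixed $\|V\|$-integrable envelopes thanks to $|\nabla u|_g=1$, the $C^1$ regularity of $h$, the bound $|c(s)-1|\le 1$, and $H_g\in L^p_{loc}(\|V\|)$. The monotone convergence $\gamma_\varepsilon(u/s)\uparrow \chi_{\{u<s\}}$ likewise allows the $\frac{d}{ds}$ on the left of \eqref{eqch1.4} to commute with the $\varepsilon$-limit, producing the left-hand side of \eqref{eqch1.5} together with the $(c(s)-1)/s$ and $\nabla h$ contributions on its right.

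The delicate point is the first term on the right of \eqref{eqch1.4},
\[
A_\varepsilon(s) := \int_{G_k(M)} \frac{h}{s^k}\,\partial_s(\gamma_\varepsilon(u/s))|\nabla^{T^\bot}u|_g^2\,dV.
\]
I would compare $A_\varepsilon(s)$ to $\frac{d}{ds}J_\varepsilon(s)$, where
\[
J_\varepsilon(s) := \int_{G_k(M)} h(y)\gamma_\varepsilon(u(y)/s)\frac{|\nabla^{T^\bot}u(y)|_g^2}{u(y)^k}\,dV(y,T)
\]
converges to $J(s)$ by dominated convergence. A direct computation shows that the integrand of $A_\varepsilon(s)$ equals $(u/s)^k$ times the integrand of $J_\varepsilon'(s)$, and since the support of $\gamma'_\varepsilon(u/s)$ lies in $\{\varepsilon<u/s<1\}$, the factor $(u/s)^k\to 1$ uniformly there as $\varepsilon\downarrow 0$. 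Integrating the inequality in $s$ on any $[\sigma_1,\sigma_2]\subset(0,r_0)$ and sending $\varepsilon\downarrow 0$, the coarea formula converts the $\gamma'_\varepsilon$-concentration into a limit equal to $J(\sigma_2)-J(\sigma_1)$; differentiating back in $s$ then recovers the $\frac{d}{ds}J(s)$ contribution in \eqref{eqch1.5}.

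The main obstacle is precisely this last step: $A_\varepsilon(s)$ does not converge termwise but must first be recognized as a regularization of $\frac{d}{ds}J(s)$, and the commutation of the $\varepsilon\downarrow 0$ limit with the $s$-differentiation has to be justified through the integrated form and the coarea formula. Once this is in place, assembling the four limits together yields \eqref{eqch1.5}.
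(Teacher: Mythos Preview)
Your overall route is the paper's: take the limit in \eqref{eqch1.4}, use dominated convergence for the $\gamma_\varepsilon$-weighted terms, invoke \hyperlink{$(AC)$}{$(AC)$} and Remark \ref{allardconditionequivalence} to convert $-\delta V(X_{s,\varepsilon})/s^{k+1}$ into the $H_g$-integral, and identify the $\gamma'_\varepsilon$-term with $\frac{d}{ds}J(s)$. Two points need correction.

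\textbf{The limit direction.} In the paper's parametrization (\,$\gamma_\varepsilon\equiv 1$ on $(-\infty,\varepsilon]$, $\gamma_\varepsilon\equiv 0$ on $[1,\infty)$, $\gamma'_\varepsilon<0$ on $(\varepsilon,1)$\,) one must take $\varepsilon_j\uparrow 1$, not $\varepsilon\downarrow 0$, so that $\gamma_{\varepsilon_j}\uparrow\chi_{(-\infty,1)}$ and the support $\{\varepsilon_j<u/s<1\}$ of $\gamma'_{\varepsilon_j}$ collapses toward $\{u/s=1\}$. With $\varepsilon\downarrow 0$ that support \emph{expands}, and your claim ``$(u/s)^k\to 1$ uniformly there'' is false. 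Every subsequent limit identification in your argument depends on this.

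\textbf{The term $A_\varepsilon$.} Once the direction is fixed, your coarea-then-redifferentiate detour is unnecessary; the paper uses a direct pointwise squeeze. On the support of $\gamma'_{\varepsilon_j}(u/s)$ one has $\varepsilon_j s<u<s$, hence
\[
\frac{\varepsilon_j^k}{s^k}\ \le\ \frac{\varepsilon_j^k}{u^k}\ \le\ \frac{1}{s^k}.
\]
Multiplying by the nonnegative quantity $h\,\partial_s\bigl(\gamma_{\varepsilon_j}(u/s)\bigr)\,|\nabla^{T^\bot}u|_g^2$ and integrating gives, in your notation,
\[
\varepsilon_j^k\,J'_{\varepsilon_j}(s)\ \le\ A_{\varepsilon_j}(s)\ \le\ J'_{\varepsilon_j}(s).
\]
Since $J_{\varepsilon_j}\to J$ by dominated convergence, $J'_{\varepsilon_j}\to \frac{d}{ds}J(s)$ in the sense of distributions in $s$, and the sandwich (with $\varepsilon_j^k\to 1$) forces $A_{\varepsilon_j}\to \frac{d}{ds}J(s)$ as well. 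This is exactly how the paper closes the argument; no coarea formula and no integrate-then-differentiate step are needed.
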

\begin{proof}
Let $\{\varepsilon_j\}_{j\in\N}$ a sequence, such that $\varepsilon_j\uparrow 1$ when $j\to\infty$ and 
$\{\gamma_{\varepsilon_j}\}\subset C^1_0(]-\infty,1[)$ a sequence of mollifiers, such that $\gamma_{\varepsilon_j}\to\chi_{]-\infty,1[}$, pointwise from below. Considering $X_{s,\varepsilon_j}$ as in \eqref{radialperturbation} by the previous reasoning (see \eqref{eqch1.4}) we have for every $j\in\N$
\small
\begin{align*}
\frac{d}{ds}\lp\frac{1}{s^k}\int_{M}h\gamma_{\varepsilon_j}\lp\frac{u}{s}\rp d\|V\|\rp &
 \geq  \int_{G_k(M)}\frac{h}{s^{k}}\frac{\partial}{\partial s}\lp\gamma_{\varepsilon_j}\left(\frac{u}{s}\rp\right)\left|\nabla^{T^{\bot}}u\right|^2_gdV\\
& + \frac{1}{s^{k+1}}\lp\int_{G_k(M)}\gamma_{\varepsilon_j}\lp\frac{u}{s}\rp\Ll\nabla h,(u\nabla u)\Rl_gdV\rp\\
& + \frac{(c(s)-1)}{s}\frac{k}{s^k}\int_{G_k(M)}h\gamma_{\varepsilon_j}\left(\frac{u}{s}\right)d\|V\|\\
& -  \frac{\delta V(X_{s,\varepsilon})}{s^{k+1}}.
\end{align*}
\normalsize
Letting $j\to\infty$, in virtue of Lebesgue dominated convergence theorem, we have for all $0<s<r_0$,
\small
\begin{equation}\label{eqch1.6}
\begin{split}
\lim_{j\to\infty} \int_Mh(y)\gamma_{\varepsilon_j}\left(\frac{u(y)}{s}\right)d||V||(y)&=\int_Mh(y) \chi_{B_g(\xi,s)}(y)d||V||(x)\\
&=\int_{B_g(\xi,s)}h(y)d||V||(y).
\end{split}
\end{equation}
\normalsize
On the other hand, since $\left|\gamma'_{\varepsilon_j}\left(\frac{u(x)}{s}\right)\right|<0$, for all $j\in\N$ and for all $x\in M$ such that $\varepsilon_j\leq u(x)/s\leq1$,  and $0<s<r_0$. Then
\small
\[
\frac{\partial}{\partial s}\left(\gamma_{\varepsilon_j}\left(\frac{u(x)}{s}\right)\right)=\gamma'_{\varepsilon_j}\left(\frac{u(x)}{s}\right)\left(\frac{-u(x)}{s^2}\right)>0.
\]
\normalsize
Therefore, since $h\geq 0$
\small
\begin{align*}
h(y)\frac{\varepsilon_j^k}{s^k}\frac{\partial}{\partial s}\left(\gamma_{\varepsilon_j}\left(\frac{u(y)}{s}\right)\right)\left|\nabla^{T^{\bot}}u(y)\right|^2_g
&\leq h(y)\frac{\varepsilon_j^k}{u(y)^k}\frac{\partial}{\partial s}\left(\gamma_{\varepsilon_j}\left(\frac{u(y)}{s}\right)\right)\left|\nabla^{T^{\bot}}u(y)\right|^2_g\\
&\leq h(y)\frac{1}{s^k}\frac{\partial}{\partial s}\left(\gamma_{\varepsilon_j}\left(\frac{u(y)}{s}\right)\right)\left|\nabla^{T^{\bot}}u(y)\right|^2_g,
\end{align*}
\normalsize
for all $n\in\N$. Integrating over $G_k(M)$,
\small
\begin{align*}
\int_{G_k(M)}h\frac{\varepsilon_j^k}{s^k}\frac{\partial}{\partial s}\left(\gamma_{\varepsilon_j}\left(\frac{u}{s}\right)\right)\left|\nabla^{T^{\bot}}u\right|^2_gdV
&\leq\int_{G_k(M)}h\frac{\varepsilon_j^k}{u^k}\frac{\partial}{\partial s}\left(\gamma_{\varepsilon_j}\left(\frac{u}{s}\right)\right)\left|\nabla^{T^{\bot}}u(x)\right|^2_gdV\\
&\leq\int_{G_k(M)}h\frac{1}{s^k}\frac{\partial}{\partial s}\left(\gamma_{\varepsilon_j}\left(\frac{u}{s}\right)\right)\left|\nabla^{T^{\bot}}u\right|^2_gdV,
\end{align*}
\normalsize
and making $j\to\infty$, we have in distributional sense

\begin{equation}\label{eqch1.7}
\lim_{j\to\infty}\int_Mh(y)\frac{1}{s^k}\frac{\partial}{\partial s}\left(\gamma_{\varepsilon_j}\left(\frac{u(y)}{s}\right)\right)\left|\nabla^{S^{\bot}}u\right|^2_gdV(y,T)=\frac{d}{ds} J(s)
\end{equation}

Finally, since $V$ satisfies \hyperlink{$(AC)$}{$(AC)$}, we have (see Remark \ref{allardconditionequivalence})
\begin{equation*}
\delta V(X_{s,\varepsilon_j})
=-\int_M h(y)\gamma_{\varepsilon_j}\lp\frac{u(y)}{s}\rp\Ll H_g (y),(u\nabla u)(y)\Rl_gd\|V\|(y),
\end{equation*}
then, taking $X_{s}(y)=h(y)\chi_{B_g(\xi,s)}(y)(u\nabla u)(y)$,
\begin{equation*}
|\delta V(X_{s,\varepsilon_j}-X_s)|
\le\int_M |h(y)| \left|\gamma_{\varepsilon_j}\lp\frac{u(y)}{s}\rp-\chi_{]-\infty,1[}\lp\frac{u(y)}{s}\rp\right|\abs{H_g}_{g}d\|V\|(y).\\
\end{equation*}
By the Lebesgue Dominated Donvergence Theorem, we have $\gamma_{\varepsilon_j}\to\chi_{]-1,1[}$ in $L^1$ sense, then, letting  $1\leq p'<+\infty$ be the conjugate exponent to $1<p\leq+\infty$, by H\"older inequality we have that 
\[
\left\|\gamma_{\varepsilon_j}-\chi_{]-1,1[}\right\|_{L^{p'}(\|V\|,B_g(\xi,s))}\to 0,
\]
for all $s\in]0,r_0[$ and $1\le p'\le+\infty$. Therefore, 

\[
\int_M\|H_g\||h| \left|\lp\gamma_{\varepsilon_j}-\chi_{]-1,1[}\rp\lp\frac{u}{s}\rp\right|d\|V\|\leq\|h\|_{L^{\infty}(\|V\|)}\|H_g\|_{L^p}\|\gamma_{\varepsilon_j}-\chi_{]-1,1[}\|_{L^{p'}}.
\]
Hence,
\small
\begin{equation}\label{eqch1.9}
\lim_{j\to\infty}\delta V(X_{s,\varepsilon_j})=\frac{1}{s^{k+1}}\int_{B_g(\xi,s)}\Ll  H ,h(y)(u\nabla u)(y)\Rl_gd\|V\|(y),
\end{equation}
\normalsize
and then the result follows from \eqref{eqch1.6}, \eqref{eqch1.7},  and \eqref{eqch1.9}
\end{proof}
\begin{corollary}[Weighted Monotonicity Formula]\label{weigthedmonotonicitycorollary1}
Under the hypothesis of the theorem above, if $0<\sigma<\rho<r_0$. Then
\begin{align*}
\frac{1}{\rho^k}\int_{B_g(\xi,\rho)}hd\|V\|-\frac{1}{\sigma^k}\int_{B_g(\xi,\sigma)}hd\|V\|
&\geq -\int_{\sigma}^{\rho}\frac1{s^k}\lp\int_{B_g(\xi,s)}|\nabla^Th|dV\rp\\
&-\int_{\sigma}^{\rho}\frac1{s^k}\lp\int_{B_g(\xi,s)}h\lp|H_g|_g-kc_1^*\rp d\|V\|\rp ds\\
&+\lp J(\rho)-J(\sigma)\rp,
\end{align*}
\normalsize
where $J(s)$ is defined as in Theorem \ref{wightedmonotonicitytheorem}.
\end{corollary}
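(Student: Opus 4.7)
The plan is to integrate the pointwise-in-$s$ inequality \eqref{eqch1.5} of Theorem \ref{fundamentalinequality} over $[\sigma,\rho]$ and bound each of the four terms on its right-hand side from below. The left-hand side and the $\frac{d}{ds}J(s)$ term collapse by the fundamental theorem of calculus to the density ratio difference $\rho^{-k}\int_{B_g(\xi,\rho)}h\,d\|V\|-\sigma^{-k}\int_{B_g(\xi,\sigma)}h\,d\|V\|$ and to $J(\rho)-J(\sigma)$, respectively.

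For the two inner-product terms, I will exploit that on $B_g(\xi,s)$ one has $u=r_\xi\le s$ and $|\nabla u|_g=1$ (since $u$ is a distance function). Cauchy--Schwarz then yields
\[
|\langle \nabla^T h,(u\nabla u)\rangle_g|\le s\,|\nabla^T h|_g,\qquad |\langle H_g,h(u\nabla u)\rangle_g|\le s\,h\,|H_g|_g,
\]
so after dividing by $s^{k+1}$ and integrating over $G_k(B_g(\xi,s))$ (respectively $B_g(\xi,s)$) these contribute the $-s^{-k}\int |\nabla^T h|_g\,dV$ and the $-s^{-k}\int h|H_g|_g\,d\|V\|$ parts of the claim. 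For the curvature-comparison term I will invoke Remark \ref{behaivourc}, according to which $s\mapsto (c(s)-1)/s$ is non-positive and decreasing, hence bounded below on $(0,r_0)$ by $c_1^{*}=(c(r_0)-1)/r_0$. Because $h\ge 0$, this gives
\[
\frac{c(s)-1}{s}\cdot\frac{k}{s^k}\int_{B_g(\xi,s)}h\,d\|V\|\ \ge\ \frac{1}{s^k}\int_{B_g(\xi,s)} h\,kc_1^{*}\,d\|V\|,
\]
which combines with the $H_g$-bound to produce exactly $-s^{-k}\int h(|H_g|_g-kc_1^{*})\,d\|V\|$.

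Assembling these estimates and integrating in $s$ over $[\sigma,\rho]$ delivers the corollary. The one point that requires some care is that $s\mapsto s^{-k}\int_{B_g(\xi,s)}h\,d\|V\|$ need not be $C^1$; but since Theorem \ref{fundamentalinequality} was obtained by mollifying $\chi_{]-\infty,1[}$ through the $\gamma_{\varepsilon_j}$ and passing to the limit, inequality \eqref{eqch1.5} holds in the sense of distributional derivatives on $(0,r_0)$. The standard BV-calculus principle---that $F'\ge g$ as measures implies $F(\rho)-F(\sigma)\ge\int_\sigma^\rho g\,ds$ for integrable $g$---then legitimates the integration. This is the only mildly subtle step; the remainder is bookkeeping.
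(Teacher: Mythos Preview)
Your proposal is correct and follows essentially the same route as the paper's proof: integrate \eqref{eqch1.5} over $[\sigma,\rho]$, collapse the left-hand side and the $\frac{d}{ds}J(s)$ term via the fundamental theorem of calculus, bound the two inner-product terms using $|u\nabla u|_g\le s$ on $B_g(\xi,s)$, and invoke the lower bound $(c(s)-1)/s\ge c_1^*$ from Remark~\ref{behaivourc} to merge the curvature-comparison term with the $H_g$ term. Your remark on the distributional sense of \eqref{eqch1.5} and the BV-type justification for the integration is a welcome addition that the paper leaves implicit.
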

\begin{proof}
Integrating over $[\sigma,\rho]\subset[0,r_0]$, from Theorem \ref{wightedmonotonicitytheorem} we have,
\begin{align*}
\int_{\sigma}^{\rho}\lp\frac{d}{ds}\lp\frac{1}{s^k}\int_{B_g(\xi,s)}h d\|V\|\rp\rp ds
& \geq \int_{\sigma}^{\rho}\lp\frac{1}{s^{k+1}}\lp\int_{G_k(B_g(\xi,s))}\Ll\nabla^T h,(u\nabla u)\Rl_gdV\rp\rp ds\\
& + \int_{\sigma}^{\rho}\lp\frac{(c(s)-1)}{s}\frac{k}{s^k}\int_{B_g(\xi,s)}hd\|V\|\rp ds\\
& +  \int_{\sigma}^{\rho}\lp\frac{1}{s^{k+1}}\int_{B_g(\xi,s)}\Ll H_g,h(u\nabla u)\Rl_gd\|V\|\rp ds\\
&+\int_{\sigma}^{\rho}\frac d{ds}J(s)ds.
\end{align*}

Applying the Fundamental Theorem of Calculus to the left-hand term, we have

\begin{equation}\label{eqch1.10}
\int_{\sigma}^{\rho}\lp\frac{d}{ds}\lp\frac{1}{s^k}\int_{B_g(\xi,s)}hd\|V\|\rp\rp=\frac{1}{\rho^k}\int_{B_g(\xi,\rho)}hd\|V\|-\frac{1}{\sigma^k}\int_{B_g(\xi,\sigma)}hd\|V\|.
\end{equation}

To estimate the right-hand term, we precede in four steps. For the last term, again from the Fundamental Theorem of Calculus we have,

\begin{equation}\label{eqch1.11}
\int_{\sigma}^{\rho}\frac d{ds}J(s)ds=J(\rho)-J(\rho).
\end{equation}

For the first term, we have

\small
\begin{equation}\label{eqch1.12}
\int_{\sigma}^{\rho}\lp\frac{1}{s^{k+1}}\lp\int_{G_k(B_g(\xi,s))}\Ll\nabla^T h,(u\nabla u)\Rl_gdV\rp\rp ds\\
\geq-\int_{\sigma}^{\rho}\frac1{s^k}\lp\int_{G_k(B_g(\xi,s))}\abs{\nabla^Th}_gdV\rp ds
\end{equation}
\normalsize
Now, estimating the third and  fourth term, (see Remark \ref{behaivourc}) we have 
\small
\begin{equation}\label{eqch1.13}
\begin{split}
\int_{\sigma}^{\rho}\frac{1}{s^{k+1}}
&\lp\int_{B_g(\xi,s)}\Ll H_g,h(u\nabla u)\Rl_gd\|V\|\rp ds+\int_{\sigma}^{\rho}\frac{c(s)-1}{s^{k+1}}\lp\int_{B_g(\xi,s)}hd\|V\|\rp ds\\
&\geq-\int_{\sigma}^{\rho}\frac{1}{s^k}\lp\int_{B_g(\xi,s)}|H_g|_ghd\|V\|\rp ds+\int_{\sigma}^{\rho}\frac{kc_1^*}{s^k}\lp\int_{B_g(\xi,s)}hd\|V\|\rp ds\\
&\geq-\int_{\sigma}^{\rho}\frac1{s^k}\lp\int_{B_g(\xi,s)}h(|H_g(y)|_g-kc_1^*)d\|V\|\rp ds.
\end{split}
\end{equation}
\normalsize
Hence, the result follows from \eqref{eqch1.10}, \eqref{eqch1.11}, \eqref{eqch1.12}, and \eqref{eqch1.13} 
\end{proof}
\begin{remark}\label{monotonicityspecialcase}
Let $h:\spt \|V\|\subset M\to \R$ such that, $h\equiv1$, and $J(s)$ as in Theorem \eqref{wightedmonotonicitytheorem} then, under the hypothesis of  Theorem \ref{wightedmonotonicitytheorem} as a particular case we have that there exists $c\in]0,1[$ such that, fort all $0<s<r_0$

\begin{equation}\label{monotonicityinequality}
\begin{split}
\frac{d}{ds}\lp\frac{\|V\|\lp B_g(\xi,s)\rp}{s^k}\rp & 
\geq \frac{d}{ds}J(s)
 + \frac{(c(s)-1)}{s}\frac{k}{s^k}\int_{B_g(\xi,s)}d\|V\| \\
& +  \frac{1}{s^{k+1}}\int_{B_g(\xi,s)}\Ll H_g,h(u\nabla u)\Rl_gd\|V\|.
\end{split}
\end{equation}
Now, assume that $V$ has locally bounded first variation in $\B{\xi}{2\rho_0}$ for some $\rho_0<r_0$, and let $\Lambda\ge0$ such that
\[
\|\delta V\|(B_g(\xi,\rho))\le\Lambda\|V\|(B_g(\xi,\rho)),\quad \text{for } 0<\rho<\rho_0,
\]
notice that the hypothesis above are much stronger than \hyperlink{$(AC)$}{$(AC)$}, and those implies that for all $X\in\X^0_c(M)$ with $\spt X\subset B_g(\xi,s\rho_0)$,
\[
\delta V(X)=\int_{G_k(M)} div_{S}X(y)dV(y,T)=-\int_M\Ll H_g(y),X(y)\Rl_g d\|V\|(y).
\]
Then is not so hard to see that \eqref{monotonicityinequality} remains valid and $\abs{H_g(y)}_g\le\Lambda$ for $\|V\|$-a.e. $y\in B_g(\xi,\rho_0)$. Then, setting $c_1^*$ as in the Remark \ref{behaivourc},

\begin{align*}
\frac{d}{ds}\lp\frac{\|V\|\lp B_g(\xi,s)\rp}{s^k}\rp 
&\geq \frac d{ds}J(s)+\frac{kc_1^*}{s^k}\int_{B_g(\xi,s)}d\|V\|-\frac{1}{s^k}\int_{B_g(\xi,s)}\frac{|u\nabla u|_g}{s}|H_g|_gd\|V\|\\
&\geq \frac{d}{ds}J(s)-\frac{1}{s^k}\int_{B_g(\xi,s)}\lp|H_g(y)|_g-kc_1^*\rp d\|V\|(y)\\
&\ge \frac{d}{ds}J(s)-\lp\Lambda-kc_1^*\rp\frac{\|V\|\lp B_g(\xi,s)\rp}{s^k}.
\end{align*}
Hence,

\[
\frac{d}{ds}\lp\frac{\|V\|\lp B_g(\xi,s)\rp}{s^k}\rp+\lp\Lambda-kc_1^*\rp\frac{\|V\|\lp B_g(\xi,s)\rp}{s^k}\ge \frac{d}{ds}J(s)>0.
\]

Finally, multiplying by  $e^{(\Lambda-kc_1^*)s}$ we have

\[
e^{(\Lambda-kc_1^*)s}\frac{d}{ds}\lp\frac{\|V\|\lp B_g(\xi,s)\rp}{s^k}\rp+\lp\Lambda-kc_1^*\rp e^{(\Lambda-kc_1^*)s}\frac{\|V\|\lp B_g(\xi,s)\rp}{s^k}\ge0,
\]
then
\begin{equation}\label{eqch1.monotonicity}
\frac{d}{ds}\lp e^{(\Lambda-kc_1^*)s}\frac{\|V\|\lp B_g(\xi,s)\rp}{s^k}\rp\ge0.
\end{equation}
Therfore, the function
\begin{align*}
f:]0,\rho_0[&\to[0,+\infty[\\
s&\mapsto e^{(\Lambda-kc_1^*)s}\frac{\|V\|\lp B_g(\xi,s)\rp}{s^k}
\end{align*}
in non-decreasing.\\
In the sequel we exploit the $L^p$ condition over the generalized mean curvature $H$  to get similar results, concerning to the monotonicity of this ratio.
\end{remark}


\subsection{$L^p$ Monotonicity Formula}
In this section we prove the monotonicity behaviour of the density ratio, as in Remark \eqref{monotonicityspecialcase}, but instead of the hypothesis of such we assume $L^p$ boundedness for $H$, i.e. under the hypothesis \hyperlink{$(AC)$}{$(AC)$}.\\

First to all, from now on we assume  $(M,g)$ to be an $n$-dimensional complete Riemannian manifold satisfying \hyperlink{$(BG)$}{$(BG)$}, and let
\[
c^*=c^*(b,r_0)=\frac{c(b,r_0)-1}{r_0},
\]
where $c(b,s)$ is as in Remark \eqref{comentsHSlemma}.\\

Notice that, if $V\in\V_k(M^n)$ satisfy \hyperlink{$(AC)$}{$(AC)$}, and considering the special case on the Remark \eqref{monotonicityspecialcase} we have in distributional sense that,

\begin{equation}\label{monotonicityinequality1}
\frac d{ds}\lp\frac{\|V\|\lp B_g(\xi,s)\rp}{s^k}\rp\ge\frac d{ds}J(s)+\frac1{s^{k+1}}\int_{B_g(\xi,s)}\lp kc^*s+\Ll H_g,u\nabla u\Rl_g\rp\dv,
\end{equation}
where $J(s)$ is as in Theorem \ref{wightedmonotonicitytheorem}.
The equation above is the Intrinsic Riemannian version of the \textit{fundamental monotonicity identity} of \cite{Simon}, and from now on we refer to this as \textit{fundamental monotonicity inequality} , also notice that, since 
\[
J(s):=\int_{G_k(B_g(\xi,s))}\frac{\lmo\nabla^{T^{\bot}}u\rmo_g^2}{r^k_{\xi}}dV\quad \text{and}\quad \|V\|(B_g(\xi,s)),
\]
are increasing in $s$, the inequality \eqref{monotonicityinequality1} holds also in the classical sense. Furthermore, from the analysis of this inequality, naturally follows a monotone behaviour of the \textit{density ratio}.
\begin{theorem}\label{L1monotonicity}
Given $\xi\in M$, let $0<\alpha\leq 1$ and $\Lambda$ be a positive constant. Let $V\in\V_k(M)$ satisfying \hyperlink{$(AC)$}{$(AC)$} and for all $0<s<r_0$
\[
\frac{1}{\alpha}\int_{B_g(\xi,s)}|H_g(x)|_gd\|V\|(x)\leq \Lambda\left(\frac{s}{r_0}\right)^{\alpha-1}\|V\|\left(B_g(\xi,s)\right).
\] 
Then the function
\[
f(s):=e^{\lambda(s)}\frac{\|V\|\left(B_g(\xi,s)\right)}{s^k},
\]
 is a non-decreasing function, where
 \[
 \lambda(s):=\lp\Lambda\lp\frac s{r_0}\rp^{\alpha-1}-c^*k\rp s,
 \]
 and in fact, for $0<\sigma<\rho\leq r_0$,

\begin{equation}\label{Eq:4.19}
e^{\lambda({\rho})}\frac{\|V\|\left(B_g(\xi,\rho)\right)}{\rho^k} - e^{\lambda({\sigma})}\frac{\|V\|\left(B_g(\xi,\sigma)\right)}{\sigma^k}\geq
J(\rho)-J(\sigma).
\end{equation}

\end{theorem}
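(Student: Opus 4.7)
\medskip

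The plan is to start from the fundamental monotonicity inequality \eqref{monotonicityinequality1} and turn the hypothesis on the $L^1$ mass of $H_g$ into a first-order linear differential inequality for $s \mapsto \|V\|(B_g(\xi,s))/s^k$, then solve it with an explicit integrating factor.

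First, I would estimate the $H_g$-term in \eqref{monotonicityinequality1}. Since $u(y) = \mathrm{dist}_g(y,\xi) \le s$ on $B_g(\xi,s)$ and $|\nabla u|_g = 1$, Cauchy--Schwarz gives
\begin{equation*}
\langle H_g, u\nabla u\rangle_g \ge -u\,|H_g|_g \ge -s\,|H_g|_g,
\end{equation*}
so
\begin{equation*}
\frac{1}{s^{k+1}}\int_{B_g(\xi,s)}\langle H_g, u\nabla u\rangle_g\,d\|V\|
 \ge -\frac{1}{s^k}\int_{B_g(\xi,s)}|H_g|_g\,d\|V\|
 \ge -\alpha\Lambda\Bigl(\frac{s}{r_0}\Bigr)^{\alpha-1}\frac{\|V\|(B_g(\xi,s))}{s^k},
\end{equation*}
where in the last step I apply the standing hypothesis. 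Combining this with the $kc^* s$-term and plugging into \eqref{monotonicityinequality1} yields
\begin{equation*}
\frac{d}{ds}\Bigl(\frac{\|V\|(B_g(\xi,s))}{s^k}\Bigr)
 + \Bigl(\alpha\Lambda\bigl(\tfrac{s}{r_0}\bigr)^{\alpha-1} - c^* k\Bigr)\frac{\|V\|(B_g(\xi,s))}{s^k}
 \ge \frac{d}{ds}J(s).
\end{equation*}

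The second step is to recognize that the coefficient on the left is exactly $\lambda'(s)$. Writing $\lambda(s) = \Lambda r_0^{1-\alpha} s^{\alpha} - c^* k s$, a direct differentiation gives $\lambda'(s) = \alpha\Lambda(s/r_0)^{\alpha-1} - c^* k$. Multiplying the inequality by the integrating factor $e^{\lambda(s)}$ collapses the left-hand side to a total derivative:
\begin{equation*}
\frac{d}{ds}\,f(s) = \frac{d}{ds}\Bigl(e^{\lambda(s)}\,\frac{\|V\|(B_g(\xi,s))}{s^k}\Bigr) \ge e^{\lambda(s)}\,\frac{d}{ds}J(s).
\end{equation*}
Since $J$ is monotone non-decreasing, its distributional derivative is a non-negative Radon measure, so $f' \ge 0$ in the same sense, proving that $f$ is non-decreasing.

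For the sharper integrated statement \eqref{Eq:4.19}, I would note that from Remark \ref{behaivourc} we have $c^* \le 0$, and since $0 < \alpha \le 1$, both summands of $\lambda(s) = \Lambda r_0^{1-\alpha} s^\alpha + |c^*|\,k\,s$ are non-negative, so $e^{\lambda(s)} \ge 1$ on $(0,r_0]$. Integrating the distributional inequality above on $[\sigma,\rho]$ and using monotonicity of $J$ gives
\begin{equation*}
f(\rho) - f(\sigma) \ge \int_\sigma^\rho e^{\lambda(s)}\,dJ(s) \ge \int_\sigma^\rho dJ(s) = J(\rho) - J(\sigma),
\end{equation*}
which is \eqref{Eq:4.19}. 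The main subtlety I expect is the low regularity of $s \mapsto \|V\|(B_g(\xi,s))$: it is only BV (monotone), so the manipulations have to be read distributionally, exactly as in Remark \ref{monotonicityspecialcase}. Once that is handled, both assertions follow by the same integrating-factor trick, and the sign of $c^*$ supplied by \hyperlink{$(BG)$}{$(BG)$} is precisely what guarantees $e^{\lambda}\ge 1$.
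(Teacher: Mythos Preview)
Your proof is correct and follows essentially the same route as the paper: estimate the curvature term in \eqref{monotonicityinequality1} via Cauchy--Schwarz and the hypothesis, recognize the resulting coefficient as $\lambda'(s)$, and use $e^{\lambda(s)}$ as an integrating factor. The only cosmetic difference is that the paper invokes $e^{\lambda(s)}\ge 1$ to drop this factor from $\tfrac{d}{ds}J(s)$ \emph{before} forming the total derivative, whereas you retain it and use $e^{\lambda}\ge 1$ only at the integration step; both orderings yield \eqref{Eq:4.19}.
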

\begin{proof}
Notice that it is enough to prove \eqref{Eq:4.19} to guarantee that $f$ is non-decreasing.\\
To prove \eqref{Eq:4.19}, we multiply the \textit{fundamental monotonicity inequality} \eqref{monotonicityinequality1} by  $e^{\lambda(s)}\ge 1$, we have
\begin{align*}
e^{\lambda(s)}\frac d{ds}\lp\frac{\|V\|\lp B_g(\xi,s)\rp}{s^k}\rp
&\ge e^{\lambda(s)}\frac d{ds}J(s)+\frac{e^{\lambda(s)}}{s^{k+1}}\int_{B_g(\xi,s)}\lp kc^*s+\Ll H_g,u\nabla u\Rl_g\rp\dv\\
&\ge \frac d{ds}J(s)-\frac{e^{\lambda(s)}}{s^{k+1}}\int_{B_g(\xi,s)}\lp\abs{H_g}_g\abs{u\nabla u}_g-kc^*s\rp\dv\\
&\ge \frac d{ds}J(s)-\frac{e^{\lambda(s)}}{s^k}\int_{B_g(\xi,s)}\lp\abs{H_g}_g\frac{\abs{u\nabla u}_g}{s}-kc^*\rp\dv\\
&\ge \frac d{ds}J(s)-\frac{e^{\lambda(s)}}{s^k}\int_{B_g(\xi,s)}\lp\abs{H_g}_g-kc^*\rp\dv.
\end{align*}

On the other hand, by hypothesis
\[
-\int_{B_g(\xi,s)}\lp\abs{H}_g-kc^*\rp\dv \ge -\lp\alpha\Lambda\lp \frac{s}{r_0}\rp^{\alpha-1}-kc^*\rp\|V\|(B_g(\xi,s)),
\]
therefore
\begin{equation*}
e^{\lambda(s)}\frac d{ds}\lp\frac{\|V\|\lp B_g(\xi,s)\rp}{s^k}\rp+\lp\alpha\Lambda\lp \frac{s}{r_0}\rp^{\alpha-1}-kc^*\rp e^{\lambda(s)}\frac{\|V\|(B_g(\xi,s))}{s^k}\ge \frac d{ds}J(s).
\end{equation*}
Finally, since
\[
\frac d{ds}\lambda(s)=\alpha\Lambda\lp \frac{s}{r_0}\rp^{\alpha-1}-kc^*,
\]
hence
\[
\frac d{ds}\lp e^{\lambda(s)}\frac{\|V\|(B_g(\xi,s))}{s^k}\rp\ge J(s)\ge0,
\]
and \eqref{Eq:4.19} follows readily by integrating over the interval $[\sigma,\rho]$. 
\end{proof}
\begin{theorem}\label{Thm:LpMonotonicityFormula}
Given $\xi\in M$, let $p>k\ge1$ and $\Gamma$ be a possitive constant. Let $V\in\V_k(M)$ satisfying \hyperlink{$(AC)$}{$(AC)$} such that
\begin{equation*}\label{Eq:LpMonotonicityFormulaStatement}
\left(\int_{B_g(\xi,r_0)}|H_g|_g^pd\|V\|\right)^{1/p}\leq\Gamma.
\end{equation*}
Then there exists a positive constant $c^*_2=c_2^*(p, k, b, r_0)$ such that
\small
\begin{equation}\label{Eq:LpMonotonicityInequality}
\left(\frac{\left(\|V\|(B_g(\xi,\sigma))\right)}{\sigma^k}\right)^{\frac1p}-\left(\frac{\left(\|V\|(B_g(\xi,\rho))\right)}{\rho^k}\right)^{\frac1p}  \leq  \frac{\Gamma+c_2^*}{p-k}\left(\rho^{1-k/p}-\sigma^{1-k/p}\right),
\end{equation}
\normalsize
for every $0<\sigma<\rho\leq r_0$. \\
Furthermore, the functions $f:]0, R[\to]0,+\infty[$ defined as 
$$f(s):=\left(\frac{\left(\|V\|(B_g(\xi,s))\right)}{s^k}\right)^{\frac1p}+\frac{\Gamma+c_1^*}{p-k}s^{1-k/p},$$
 is monotone non-decreasing. 
\end{theorem}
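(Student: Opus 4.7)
The plan is to reduce the theorem to a scalar differential inequality for $\phi(s) := \bigl(\|V\|(B_g(\xi,s))/s^k\bigr)^{1/p}$ and then integrate. To start, I would take the fundamental monotonicity inequality \eqref{monotonicityinequality1} with $h\equiv 1$ (the special case singled out in Remark \ref{monotonicityspecialcase}), drop the non-negative term $\frac{d}{ds}J(s)$, and estimate the mean-curvature contribution. Writing $I(s) := \|V\|(B_g(\xi,s))$ and using Cauchy--Schwarz together with $|u\nabla u|_g = u \leq s$ on $B_g(\xi,s)$, I would bound the inner-product term below by $-(1/s^k)\int_{B_g(\xi,s)} |H_g|_g\,d\|V\|$. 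H\"older's inequality with conjugate exponents $p$ and $p/(p-1)$ applied to the $L^p$ hypothesis yields $\int_{B_g(\xi,s)} |H_g|_g\, d\|V\| \leq \Gamma\, I(s)^{(p-1)/p}$, so that
\begin{equation*}
\frac{d}{ds}\!\left(\frac{I(s)}{s^k}\right) \;\geq\; \frac{kc^*}{s^k}\,I(s) \;-\; \frac{\Gamma\, I(s)^{(p-1)/p}}{s^k}.
\end{equation*}

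Next I would perform the substitution $\phi = (I/s^k)^{1/p}$. The chain rule gives $\frac{d}{ds}(I/s^k) = p\phi^{p-1}\phi'$, and the identities $I/s^k = \phi^p$ and $I^{(p-1)/p}/s^k = \phi^{p-1}/s^{k/p}$ let me divide through by $p\phi^{p-1}$ to obtain the scalar differential inequality
\begin{equation*}
\phi'(s) \;\geq\; \frac{kc^*}{p}\,\phi(s) \;-\; \frac{\Gamma}{p\, s^{k/p}}.
\end{equation*}
Division by $\phi^{p-1}$ is legitimate wherever $\phi>0$; points where $\phi$ vanishes I would treat by replacing $\phi$ with $\phi+\varepsilon$ and sending $\varepsilon\downarrow 0$.

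The third step is the decisive one: control the curvature correction $\tfrac{kc^*}{p}\phi(s)$. Since $c^*\leq 0$ by Remark \ref{behaivourc}, this term is non-positive, while the trivial bound $\phi(s)\,s^{k/p} = I(s)^{1/p} \leq I(r_0)^{1/p}$, valid for $s \leq r_0$, lets me estimate $\tfrac{kc^*}{p}\phi(s) \geq -|kc^*|\, I(r_0)^{1/p}/(p\, s^{k/p})$. Absorbing the resulting constant as $c_2^* := |kc^*|\, I(r_0)^{1/p}$ turns the inequality into $\phi'(s) \geq -(\Gamma+c_2^*)/(p\, s^{k/p})$. Integrating over $[\sigma,\rho] \subset (0,r_0]$ yields
\begin{equation*}
\phi(\sigma) - \phi(\rho) \;\leq\; \frac{\Gamma + c_2^*}{p-k}\,\bigl(\rho^{1-k/p} - \sigma^{1-k/p}\bigr),
\end{equation*}
which is exactly \eqref{Eq:LpMonotonicityInequality}. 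The non-decreasing character of $f(s) = \phi(s) + \tfrac{\Gamma+c_2^*}{p-k}\, s^{1-k/p}$ is then an immediate rearrangement: $f(\sigma)\leq f(\rho)$ iff the displayed inequality holds.

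The principal obstacle is precisely the curvature term $\tfrac{kc^*}{p}\phi(s)$: because it depends linearly on the unknown $\phi(s)$, no integrating factor produces a clean additive bound by $\Gamma+c_2^*$ unless $\phi$ is a priori controlled. The crude estimate $I(s)\leq I(r_0)$ circumvents this, at the cost of introducing a tacit dependence of $c_2^*$ on the global weight $\|V\|(B_g(\xi,r_0))$ beyond the $(p,k,b,r_0)$ dependence announced in the statement; arguably this dependence should be made explicit when defining $c_2^*$.
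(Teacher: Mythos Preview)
Your argument is correct and follows essentially the same route as the paper: start from the fundamental monotonicity inequality \eqref{monotonicityinequality1}, drop the non-negative $J'$-term, apply H\"older to the mean-curvature integral, absorb the curvature correction via the crude bound $I(s)^{1/p}\le I(r_0)^{1/p}$, and integrate the resulting scalar inequality for $\phi=(I/s^k)^{1/p}$. Your closing remark that $c_2^*$ tacitly depends on $\|V\|(B_g(\xi,r_0))$ is also on point---the paper makes the identical choice (``such constant always exists because $\|V\|$ is a Radon measure''), so the stated $(p,k,b,r_0)$-dependence is indeed incomplete in both treatments.
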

\begin{proof}
From the H\"older inequality in the \textit{fundamental monotonicity inequality} \eqref{monotonicityinequality1} we have for all $0<s<r_0$ that

\begin{align*}
\frac d{ds}\lp\frac{\|V\|(B_g(\xi,s))}{s^k}\rp
&\ge \frac d{ds}J(s)+\frac1{s^{k+1}}\int_{B_g(\xi,s)}\lp kc^*+\Ll H_g,u\nabla u\Rl_g\rp\dv\\
&\ge \frac d{ds}J(s)-\frac1{s^k}\int_{B_g(\xi,s)}\lp\abs{H_g}_g-kc^*\rp\dv\\
&\ge \frac d{ds}J(s)-\frac1{s^k}\int_{B_g(\xi,s)}\abs{H_g}_g\dv+kc^*\frac{\|V\|(B_g(\xi,s))}{s^k}\\
&\ge \frac d{ds}J(s)-\frac1{s^k}\lp \|V\|(B_g(\xi,s)) \rp^{\frac1{p'}}\|H_g\|_{L^p(B_g(\xi,s))}+kc^*\frac{\|V\|(B_g(\xi,s))}{s^k},
\end{align*}

where $p'$ is the conjugate exponent to $p$. Now, by hypothesis we know that 

\[
\|H_g\|_{L^p(\|V\|,B_g(\xi,s))}\le\|H_g\|_{L^p(\|V\|,B_g(\xi,r_0))}\le\Gamma,
\]
and, we also have that $J(s)\ge0$ for all $0<s<r_0$, then
\[
\frac d{ds}\lp\frac{\|V\|(B_g(\xi,s))}{s^k}\rp\ge-\frac{\lp\|V\|(B_g(\xi,s))\rp^{\frac1{p'}}}{s^k}\lp\Gamma-kc^*\lp\|V\|(B_g(\xi,s)).\rp^{\frac1{p}}\rp
\]
Now, by Remarks \ref{behaivourc} and \ref{comentsHSlemma}, $c^*\le0$, then for all $0<s<r_0$
\[
\Gamma-kc^*\lp\|V\|(B_g(\xi,s))\rp^{\frac1{p}}\le\Gamma-kc^*\lp\|V\|(B_g(\xi,r_0))\rp^{\frac1{p}}.
\]
Hence
\begin{equation*}
\frac d{ds}\lp\frac{\|V\|(B_g(\xi,s))}{s^k}\rp\ge-\frac{\lp\|V\|(B_g(\xi,s))\rp^{\frac1{p'}}}{s^k}\lp\Gamma+c^*_2\rp,
\end{equation*}
where $c_2^*=c_2^*(k,p,b,r_0)$ is a positive constant such that
\[
-kc^*\lp\|V\|(B_g(\xi,r_0))\rp^{\frac1{p}}\le c_2^*,
\]
such constant always exists because $\|V\|$ is a Radon measure.\\

On the other hand, since
\[
\frac d{ds}\lp\frac{\|V\|(B_g(\xi,s))}{s^k}\rp^{\frac1p}=\frac1p\lp\frac{\|V\|(B_g(\xi,s))}{s^k}\rp^{\frac1{p'}}\frac d{ds}\lp\frac{\|V\|(B_g(\xi,s))}{s^k}\rp,
\]
we have that
\[
\frac d{ds}\lp\frac{\|V\|(B_g(\xi,s))}{s^k}\rp^{\frac1p}\ge\frac{\Gamma+c^*_2}{ps^{\frac kp}},
\]
and the result follows integrating the inequality above over $[\sigma,\rho]\subset]0,r_0[$.
\end{proof}
\begin{corollary}
If $H\in L_{loc}^p(M,TM,\mathcal{H}_g^k)$ for some $p>k$, then the density
\[
\Theta^k(\|V\|,x):=\lim_{\rho\downarrow 0}\frac{\|V\|(B_g(x,\rho))}{\omega_k\rho^k},
\]
does exists at every $x\in B_g(\xi,r_0)$. Furthermore, $\Theta^k(\|V\|,\cdot)$ is an upper-semi-continuous function in $M^n$, i.e.
\[
\Theta^k(\|V\|,x)\geq\limsup_{y\to x}\Theta^k(\|V\|,x)\quad\forall x\in B_g(\xi,r_0).
\]
\end{corollary}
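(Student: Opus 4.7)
The plan is to use Theorem \ref{Thm:LpMonotonicityFormula} as the main tool, applied pointwise. Fix $x_0\in B_g(\xi,r_0)$. Since $H_g\in L^p_{loc}$ with $p>k$ there is a neighborhood $U$ of $x_0$ and a radius $r_1>0$ so that $B_g(x,r_1)\subset B_g(\xi,r_0)$ for every $x\in U$ and so that the bound $\Gamma:=\|H_g\|_{L^p(B_g(\xi,r_0),\|V\|)}<\infty$ controls $\|H_g\|_{L^p(B_g(x,r_1),\|V\|)}$ uniformly in $x\in U$. Applying Theorem \ref{Thm:LpMonotonicityFormula} based at $x\in U$ then yields that
\[
F_x(s):=\left(\frac{\|V\|(B_g(x,s))}{s^k}\right)^{1/p}+\frac{\Gamma+c_2^*}{p-k}\,s^{1-k/p}
\]
is monotone non-decreasing on $]0,r_1[$. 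Because $p>k$ we have $s^{1-k/p}\to 0$ as $s\downarrow 0$; together with $F_x\ge 0$ and the monotonicity this forces $\lim_{s\downarrow 0}(\|V\|(B_g(x,s))/s^k)^{1/p}$ to exist in $[0,+\infty[$. Hence $\Theta^k(\|V\|,x)$ exists at every $x\in B_g(\xi,r_0)$.

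For the upper semicontinuity at $x_0$, I let $\sigma\downarrow 0$ in \eqref{Eq:LpMonotonicityInequality} based at a point $y\in U$ to obtain
\[
(\omega_k\Theta^k(\|V\|,y))^{1/p}\le\left(\frac{\|V\|(B_g(y,\rho))}{\rho^k}\right)^{1/p}+\frac{\Gamma+c_2^*}{p-k}\,\rho^{1-k/p}
\]
for every $0<\rho<r_1$. The triangle inequality $B_g(y,\rho)\subset B_g(x_0,\rho+d_g(y,x_0))$ bounds the first summand from above by $(\|V\|(B_g(x_0,\rho+d_g(y,x_0)))/\rho^k)^{1/p}$, and outer regularity of the Radon measure $\|V\|$ gives $\|V\|(B_g(x_0,\rho+\delta))\downarrow\|V\|(\overline{B_g(x_0,\rho)})$ as $\delta\downarrow 0$. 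Taking $\limsup_{y\to x_0}$ therefore produces
\[
\limsup_{y\to x_0}(\omega_k\Theta^k(\|V\|,y))^{1/p}\le\left(\frac{\|V\|(\overline{B_g(x_0,\rho)})}{\rho^k}\right)^{1/p}+\frac{\Gamma+c_2^*}{p-k}\,\rho^{1-k/p}.
\]

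To conclude, I let $\rho\downarrow 0$ along a sequence $\rho_n$ of \emph{good} radii for which $\|V\|(\partial B_g(x_0,\rho_n))=0$; such a sequence exists because the set of radii with positive boundary measure is at most countable. Along $\rho_n$ the closed and open ball have the same $\|V\|$-measure, and the right-hand side converges to $(\omega_k\Theta^k(\|V\|,x_0))^{1/p}$ by the existence step of the first paragraph. Raising to the $p$-th power and dividing by $\omega_k$ yields $\limsup_{y\to x_0}\Theta^k(\|V\|,y)\le\Theta^k(\|V\|,x_0)$, as asserted. The only delicate point is the open-versus-closed ball discrepancy, handled by the good-radii device; all the analytic content is packaged in Theorem \ref{Thm:LpMonotonicityFormula}.
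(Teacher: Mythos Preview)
Your proof is correct and follows essentially the same route as the paper: existence of the density from the monotonicity of $F_x$ in Theorem~\ref{Thm:LpMonotonicityFormula}, and upper-semi-continuity by letting $\sigma\downarrow 0$ in \eqref{Eq:LpMonotonicityInequality}, enlarging $B_g(y,\rho)$ to a ball about $x_0$, and then sending $\rho\downarrow 0$. The only cosmetic difference is in the last step: the paper observes directly that once $\lim_{s\downarrow 0}\|V\|(B_g(x_0,s))/s^k$ exists it automatically equals $\lim_{s\downarrow 0}\|V\|(\overline{B_g(x_0,s)})/s^k$ (sandwich $\overline{B_g(x_0,s)}\subset B_g(x_0,(1+\delta)s)$), and then runs an $\varepsilon$--$\delta$ argument, whereas you pass through a sequence of radii with $\|V\|(\partial B_g(x_0,\rho_n))=0$; both devices do the same job.
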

\begin{proof}
Let
\[
f(s):=\left(\frac{\|V\|(B_g(\xi,s))}{s^k}\right)^{1/p}+\frac{\Gamma+c_1^*}{p-k}s^{1-k/p}.
\]
By the Theorem \ref{Thm:LpMonotonicityFormula}, for $0<\sigma<\rho\leq r_0$
\begin{align*}
f(\rho)-f(\sigma)=&-\left(\left(\frac{\|V\|(B_g(\xi,\sigma))}{\sigma^k}\right)^{1/p}-\left(\frac{\|V\|(B_g(\xi,\rho))}{\rho^k}\right)^{1/p}\right)\\
&+\frac{\Gamma+ c_1^*}{p-k}\left(\rho^{1-1/p}-\sigma^{1-1/p}\right)\geq 0.
\end{align*}
Then $f$ is a non-decreasing function, therefore, the limit $f(s)\to l$ when $s\downarrow 0^+$, exists with $l\in[0,+\infty[$. Hence the limit
\[
\lim_{s\downarrow 0}\frac{\|V\|(B_g(\xi,s))}{s^k},
\] 
also exists, furthermore, is equal to
\[
\lim_{s\downarrow 0}\frac{\|V\|(\overline{B_g(\xi,s)})}{s^k}.
\] 
On the other hand, let $\varepsilon>0$ fixed, such that $0<\sigma<\rho$, $B_g(x,\rho+\varepsilon)\subset  B_g(\xi,r_0)$ and $d_g(x,y)<\varepsilon$, then, we also deduce from Theorem \ref{Thm:LpMonotonicityFormula} and the monotonicity of $\|V\|$ that
\begin{align*}
\left(\frac{\|V\|(B_g(y,\sigma))}{\sigma^k}\right)^{1/p}&\leq\left(\frac{\|V\|(B_g(y,\rho))}{\rho^k}\right)^{1/p}+\frac{\Gamma+c_1^*}{p-k}\rho^{1-k/p}\\
&\leq\left(\frac{\|V\|(B_g(x,\rho+\varepsilon))}{\rho^k}\right)^{1/p}+\frac{\Gamma+c_1^*}{p-k}\rho^{1-k/p}.
\end{align*}
Letting $\sigma\downarrow 0$, we thus have
\[
\left(\Theta^k(\|V\|,y)\right)^{1/p}\leq\left(\frac{\|V\|(B_g(x,\rho+\varepsilon))}{\omega_k(\rho+\varepsilon)^{k/p}}\right)^{1/p}+\frac{\Gamma+c_1^*}{\omega_k^{1/p}(p-k)}\rho^{1-k/p}.
\]
Now, let $\delta>0$ be given and choose $\varepsilon\ll\rho<\delta$ so that
\[
\left(\frac{\|V\|(B_g(x,\rho+\varepsilon))}{\omega_k(\rho+\varepsilon)^{k/p}}\right)<\left(\Theta^k(\|V\|,x)\right)^{1/p}+\delta.
\] 
The inequality above gives
\[
\left(\Theta^k(\|V\|,y)\right)^{1/p}\leq\left(\Theta^k(\|V\|,x)\right)^{1/p}+\frac{\Gamma+c_1^*}{\omega_k^{1/p}(p-k)}\rho^{1-k/p},
\]
provided $d_g(y,x)<\varepsilon$ and  the desired upper-semi-continuity follows straightforward.
\end{proof}
\begin{proposition}\label{prop1}
Let $V\in\V_k(M)$, $\xi\in M$ fixed, and assume that
\[
\|V\|(B_g(\xi,\sigma))\leq\beta\sigma^n\quad for\ 0<\sigma<r_0,
\]
then
\[
\int_{B_g(\xi,\rho)}d_g(x,\xi)^{\alpha-k}d\|V\|(x)\leq \frac{k\beta\rho^{\alpha}}{\alpha},
\]
for any $\rho\in]0,r_0[$ and $0<\alpha<k$.
\end{proposition}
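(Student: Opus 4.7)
The plan is to use a layer--cake style decomposition of the integrand together with Fubini's theorem, reducing the estimate to the hypothesized volume growth bound $\|V\|(B_g(\xi,\sigma))\le\beta\sigma^k$.

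First I would rewrite the integrand on $B_g(\xi,\rho)$ in integral form. Since $\alpha-k<0$, the function $s\mapsto s^{\alpha-k}$ is decreasing and satisfies
\[
d_g(x,\xi)^{\alpha-k}=\rho^{\alpha-k}+(k-\alpha)\int_{d_g(x,\xi)}^{\rho}s^{\alpha-k-1}\,ds,
\]
for every $x\in B_g(\xi,\rho)$. Integrating this identity against $d\|V\|$ over $B_g(\xi,\rho)$ gives
\[
\int_{B_g(\xi,\rho)}d_g(x,\xi)^{\alpha-k}d\|V\|(x)=\rho^{\alpha-k}\|V\|(B_g(\xi,\rho))+(k-\alpha)\int_{B_g(\xi,\rho)}\int_{d_g(x,\xi)}^{\rho}s^{\alpha-k-1}ds\,d\|V\|(x).
\]

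Next I would swap the order of integration in the double integral. Setting $E_s:=\{x\in B_g(\xi,\rho):d_g(x,\xi)<s\}=B_g(\xi,s)$ for $0<s\le\rho$, Fubini yields
\[
\int_{B_g(\xi,\rho)}\int_{d_g(x,\xi)}^{\rho}s^{\alpha-k-1}ds\,d\|V\|(x)=\int_{0}^{\rho}s^{\alpha-k-1}\|V\|(B_g(\xi,s))\,ds.
\]
At this point the growth hypothesis $\|V\|(B_g(\xi,s))\le\beta s^{k}$ enters directly: the inner factor $s^{\alpha-k-1}\|V\|(B_g(\xi,s))$ is dominated by $\beta s^{\alpha-1}$, an integrable function on $]0,\rho[$ precisely because $\alpha>0$. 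Consequently the double integral is bounded by $\beta\rho^{\alpha}/\alpha$, while the boundary term is bounded by $\beta\rho^{\alpha}$.

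Combining these two bounds and simplifying gives
\[
\int_{B_g(\xi,\rho)}d_g(x,\xi)^{\alpha-k}d\|V\|(x)\le\beta\rho^{\alpha}+\frac{(k-\alpha)\beta\rho^{\alpha}}{\alpha}=\frac{k\beta\rho^{\alpha}}{\alpha},
\]
which is the desired estimate. No real obstacle arises: the only point that needs to be kept in mind is that $\alpha<k$ ensures the layer--cake identity makes sense and $\alpha>0$ ensures integrability near $0$, so the hypothesis $0<\alpha<k$ is used in both places. (I am reading the hypothesis $\|V\|(B_g(\xi,\sigma))\le\beta\sigma^{n}$ as a typo for $\beta\sigma^{k}$, which is the natural growth assumption for a $k$--varifold and the only one compatible with the conclusion.)
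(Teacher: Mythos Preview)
Your proof is correct and is essentially the same argument as the paper's. The paper phrases the layer--cake step via Lemma~\ref{riem4.3.5} applied to $f(x)=d_g(x,\xi)^{-1}$ with $\gamma=k-\alpha$ and $t_0=1/\rho$ (so the integral runs over $t\in[1/\rho,\infty[$), while you write the identity $d_g(x,\xi)^{\alpha-k}=\rho^{\alpha-k}+(k-\alpha)\int_{d_g(x,\xi)}^{\rho}s^{\alpha-k-1}ds$ directly and Fubini over $s\in]0,\rho[$; these are the same computation under the substitution $t=1/s$, and the final arithmetic $\beta\rho^{\alpha}+(k-\alpha)\beta\rho^{\alpha}/\alpha=k\beta\rho^{\alpha}/\alpha$ is identical. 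Your reading of the hypothesis as $\|V\|(B_g(\xi,\sigma))\le\beta\sigma^{k}$ is also what the paper actually uses.
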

The proof of this is based in the following technical lemma.
\begin{lemma}\label{riem4.3.5}
Let $(X,\mu)$ a measure space, $\gamma>0$, $f\in L^1(\mu)$,  $f\geq 0$, then
\[
\int_0^{\infty}t^{\gamma-1}\mu(E_t)dt=\frac{1}{\gamma}\int_{E_0}f^{\gamma}(x)d\mu(x),
\]
where $E_t:=\{x:f(x)> t\}$. More generally
\[
\int_{t_0}^{\infty}t^{\gamma-1}\mu(E_t)dt=\frac{1}{\gamma}\int_{E_{t_0}}(f^{\gamma}(x)-t_0^{\gamma})d\mu(x),
\]
for each $t_0\geq 0$.
\end{lemma}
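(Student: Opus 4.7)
The plan is to prove the identity by rewriting $\mu(E_t)$ as an integral of the indicator function $\mathbf{1}_{\{f>t\}}$ over $X$ and then swapping the order of integration via Tonelli's theorem. Since the integrand $t^{\gamma-1}\mathbf{1}_{\{f(x)>t\}}$ is nonnegative and (jointly) measurable on $]0,\infty[ \times X$, Tonelli applies with no integrability hypothesis required beyond what is stated.

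Concretely, I would write
\[
\int_0^{\infty} t^{\gamma-1}\mu(E_t)\,dt
= \int_0^{\infty}\int_X t^{\gamma-1}\mathbf{1}_{\{f(x)>t\}}\,d\mu(x)\,dt
= \int_X \int_0^{f(x)} t^{\gamma-1}\,dt\,d\mu(x),
\]
and then evaluate the inner integral explicitly as $f(x)^{\gamma}/\gamma$, noting that the contribution from the set $\{f=0\}$ vanishes so the outer integral may be restricted to $E_0=\{f>0\}$. This gives the first identity.

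For the generalized version with $t_0 \geq 0$, the same Fubini/Tonelli argument yields
\[
\int_{t_0}^{\infty} t^{\gamma-1}\mu(E_t)\,dt
= \int_X \int_{t_0}^{\infty} t^{\gamma-1}\mathbf{1}_{\{f(x)>t\}}\,dt\,d\mu(x).
\]
The inner integral is nonzero only when $f(x)>t_0$ (i.e., $x\in E_{t_0}$), and on that set it equals $\int_{t_0}^{f(x)} t^{\gamma-1}\,dt = (f(x)^{\gamma}-t_0^{\gamma})/\gamma$. Substituting back produces the stated formula.

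There is no real obstacle here: the only point requiring minor care is measurability of $(t,x)\mapsto \mathbf{1}_{\{f(x)>t\}}$, which follows immediately from the measurability of $f$ since $\{(t,x): f(x)>t\}$ is the preimage of the open set $\{(t,s)\in\mathbb{R}^2 : s>t\}$ under the measurable map $(t,x)\mapsto(t,f(x))$. Everything else is a routine application of Tonelli and the fundamental theorem of calculus.
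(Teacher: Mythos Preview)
Your proof is correct and is exactly the standard layer-cake/Tonelli argument; the paper does not prove this lemma itself but simply cites Mattila, where the same argument appears. One minor remark: the hypothesis $f\in L^1(\mu)$ is what guarantees $E_0=\{f>0\}$ is $\sigma$-finite (via Chebyshev), which is needed for Tonelli to apply on a general measure space---you implicitly use this but it is worth making explicit.
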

This is a clasical Lemma from measure theory, and its proof can be found in \cite{Mattila} pg. 15.
\begin{proof}[Proof of Proposition \ref{prop1}]
Let in the previous Lemma, $f(x)=|x-\xi|^{-1}$, $\gamma=k-\alpha$ and $t_0=1/\rho$, then 
\begin{equation}\label{lemma}
\int_{E_{1/ \rho}}\left(|x-\xi|^{\alpha-k}_g-\rho^{\alpha-k}\right)d\|V\|(x)=(\alpha-k)\int_{1/\rho}^{\infty}t^{(k-\alpha)-1}\|V\|(E_t)d\|V\| (x).
\end{equation}
Notice that
$$E_t:=\{x:f(x)>t\}=\left\{x:\frac{1}{|x-\xi|_g}>t\right\}=\{x:|x-\xi|_g<1/t\}:=B_g(\xi,1/t).$$
On the other hand, by hypothesis, we have
\[
\int_{1/\rho}^{\infty}t^{(k-\alpha)-1}\|V\|(B_g(\xi,1/t))d\mu(x)\leq\int_{1/\rho}^{\infty}\frac{t^{(k-\alpha)-1}\beta}{t^k}d\|V\|(x)=\frac{\beta\rho^{\alpha}}{\alpha},
\]
for any $0<1/t<r_0$.
Putting together the information above in \eqref{lemma}, we have
\small
\begin{align*}
\int_{B_g(\xi,\rho)}\left(|x-\xi|^{\alpha-k}_g-\rho^{\alpha-k}\right)d\|V\|&=\int_{B_g(\xi,\rho)}\left(|x-\xi|^{\alpha-k}_g\right)d\|V\|-\int_{B_g(\xi,\rho)}\rho^{\alpha-k}d\|V\|\\
&=\int_{B_g(\xi,\rho)}\left(|x-\xi|^{\alpha-k}_g\right)d\|V\|-\rho^{\alpha-k}\|V\|(B_g(\xi,\rho))\\
&\leq\frac{(k-\alpha)\beta\rho^{\alpha}}{\alpha},\\
\intertext{then,}
\int_{B_g(\xi,\rho)}\left(|x-\xi|^{\alpha-k}_g\right)d\|V\|&\leq\frac{(k-\alpha)\beta\rho^{\alpha}}{\alpha}+\rho^{\alpha-k}\|V\|(B_g(\xi,\rho))\\
&\leq\frac{(k-\alpha)\beta\rho^{\alpha}}{\alpha}+\beta\rho^{k}\\
&=\frac{k\beta\rho^{\alpha}}{\alpha},
\end{align*}
\normalsize
provided $\rho\in]0,r_0[$.
\end{proof}

\section{Poincar\'e and Sobolev Type Inequalities for Intrinsic Varifolds }
In this final section, a Poincar\'e and Sobolev-type inequalities are proved for non-negative functions defined on $S\subset M^n$, $ \Ha_g^k$-rectifiable sets as a consequence of the \textit{Fundamental Weighted Monotonicity Inequality} \ref{fundamentalinequality}  in the particular case in which $V:=\rv{S}{\theta}$, with $\theta\in L^1_{loc}(\Ha^k_g\llcorner S)$ and mean curvature vector $H_g\in L^1_{loc}(\|V\|)$.\\
Therefore along this section we are under the assumptions of Theorem \ref{wightedmonotonicitytheorem}, i.e. 
\begin{itemize}
\item[\hyperlink{$(BG)$}{$(BG)$}:]Let $(M^n,g)$ be a complete Riemannian Manifold with Levi-Civita connection $\nabla$. We say that $(M^n,g)$ satisfies \hyperlink{$(BG)$}{$(BG)$}, if $Sec_g\leq b$ for some constant $b\in\R$, in case $b>0$ we assume furthermore $br_0<\pi$.
 \item[\hyperlink{$(AC)$}{$(AC)$}:]We say that $V\in\V_k(M)$ satisfies the \hyperlink{$(AC)$}{$(AC)$} condition, whenever
\begin{equation*}
\delta V(X)\le C(V)\lp\int_{B_g(\xi,r_0)}|X|^{\frac{p}{p-1}}d\|V\|\rp^{\frac{p-1}{p}},\;\forall X\in\X^1_c(B_g(\xi,r_0)),
\end{equation*}
for some $p>k$.
\end{itemize}
Furthermore, we need to explain some notation: in what follows $\xi\in M$ is a fixed point and $r_0<inj_{\xi}(M,g)$ such that $r_0\cot_b(r_0)>0$, according to Definition \ref{Def:GeometricConditions}.

\subsection{Poincar\'e type inequality}
The Poincar\'e type inequality which we proof is a consequence of the following lemma, which is a direct application of the formula \eqref{fundamentalinequality}.
\begin{lemma}\label{lemmacondition1ps}
Assume $(M^n,g)$ satisfying \hyperlink{$(BG)$}{$(BG)$} and $V\in\V_k(M^n)$  satisfying \hyperlink{$(AC)$}{$(AC)$}. Let $\Lambda>0$ such that $|H_g|_g\le\Lambda$, and $h\in C^1(M)$, then for $\|V\|$-a.e. $\xi\in\spt \|V\|$, and for all $0<\rho<r_0$
\begin{equation}\label{condition1ps}
h(\xi)\le e^{\lp\Lambda+c^*k\rp\rho}\lp\frac1{\omega_k}\rho^k\int_{B_g(\xi,\rho)}hd\|V\|+\int_{B_g(\xi,\rho)}\frac{\lmo\nabla_g^Sh\rmo_g}{r^{k-1}_{\xi}}d\|V\|\rp,
\end{equation}
where $c^*$ is as in Remark \ref{behaivourc}.
\end{lemma}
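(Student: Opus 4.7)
The plan is to specialize the Fundamental Weighted Monotonicity Inequality (Theorem~\ref{fundamentalinequality}) to obtain a scalar differential inequality for the density ratio $\Phi(s) := s^{-k}\int_{B_g(\xi,s)} h\, d\|V\|$, convert it into a monotonicity statement via an integrating factor, integrate from $\sigma$ to $\rho$, and pass to the limit $\sigma \to 0^+$ at $\|V\|$-a.e.~$\xi \in \spt\|V\|$ using Lebesgue differentiation.

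Starting from Theorem~\ref{fundamentalinequality}, I bound the right-hand side from below term by term. Cauchy--Schwarz gives $\langle \nabla^T h,\, u\nabla u\rangle_g \geq -|\nabla^T h|_g\, u$; the estimate $|u\nabla u|_g = u \leq s$ on $B_g(\xi,s)$ together with the pointwise bound $|H_g|_g \leq \Lambda$ gives $s^{-(k+1)}\int \langle H_g,\, h(u\nabla u)\rangle_g d\|V\| \geq -\Lambda \Phi(s)$; by Remark~\ref{behaivourc}, $\frac{c(s)-1}{s} \geq c^*$, so the curvature term is at least $c^* k \Phi(s)$; and the non-negative $\frac{dJ}{ds}$ can be dropped. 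This yields
\[
\Phi'(s) + (\Lambda - c^*k)\Phi(s) \geq -\frac{1}{s^{k+1}}\int_{G_k(B_g(\xi,s))} |\nabla^T h|_g\, u\, dV.
\]
Multiplying by the integrating factor $e^{(\Lambda - c^*k)s}$, integrating over $[\sigma, \rho]$, and bounding $e^{(\Lambda - c^*k)s} \leq e^{(\Lambda - c^*k)\rho}$ on the right, one obtains
\[
e^{(\Lambda - c^*k)\rho}\Phi(\rho) - e^{(\Lambda - c^*k)\sigma}\Phi(\sigma) \geq -e^{(\Lambda - c^*k)\rho}\int_\sigma^\rho \frac{1}{s^{k+1}}\int |\nabla^T h|_g\, u\, dV\, ds.
\]

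Next I apply Fubini to the double integral on the right: the inner $s$-integral evaluates to $\tfrac{1}{k}\bigl(r_\xi(x)^{-k} - \rho^{-k}\bigr)$, which when combined with $u = r_\xi$ yields an integrand bounded above by $\tfrac{1}{k} r_\xi^{1-k}$. The disintegration Theorem~\ref{DVThm} then rewrites the Grassmannian integral as an integral against $d\|V\|$, producing the desired term $\tfrac{1}{k}\int_{B_g(\xi,\rho)} |\nabla^S h|_g\, r_\xi^{1-k}\, d\|V\|$. On the left, for $\|V\|$-a.e.~$\xi$, continuity of $h$ combined with the Lebesgue differentiation theorem on $(M,g)$ --- available because Vitali's covering property holds on small geodesic balls, as already exploited in Proposition~\ref{firstvariationrepresentation} --- gives $\lim_{\sigma \to 0}\Phi(\sigma) = \omega_k h(\xi)\Theta^k(\|V\|,\xi) \geq \omega_k h(\xi)$ under the density lower bound $\Theta^k(\|V\|,\xi) \geq 1$. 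Rearranging delivers the stated pointwise inequality (with the exponent $\Lambda - c^*k$ that matches the integrating factor of Remark~\ref{monotonicityspecialcase}).

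The principal obstacle is the limit $\sigma \to 0^+$: the conclusion has to hold at $\|V\|$-a.e.~$\xi \in \spt\|V\|$, so one must select $\xi$ simultaneously as (i) a Lebesgue point of $h$ with respect to $\|V\|$, and (ii) a point where the $k$-density $\Theta^k(\|V\|,\xi)$ exists and is bounded below by $1$. Both properties hold $\|V\|$-almost everywhere in the rectifiable-varifold setting where this Lemma is applied in Section~4 (note that Theorem~\ref{poincare} explicitly imposes $\theta \geq 1$ $\|V\|$-a.e.), so the conclusion follows on the intersection of these two full-measure sets.
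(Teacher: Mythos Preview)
Your proposal is correct and follows essentially the same route as the paper: bound the right-hand side of the Fundamental Weighted Monotonicity Inequality, absorb the $\Lambda$ and $c^*k$ terms via an integrating factor, integrate from $\sigma$ to $\rho$, evaluate the resulting double integral (you use Fubini directly whereas the paper invokes the layer-cake Lemma~\ref{riem4.3.5}, which amounts to the same computation), and pass to the limit $\sigma\to 0$ at density/Lebesgue points. Your exponent $\Lambda - c^{*}k$ agrees with the paper's own Remark~\ref{monotonicityspecialcase}; the $\Lambda + c^{*}k$ appearing in the Lemma's statement and proof is a sign slip in the paper (recall $c^{*}\le 0$, so $|H_g|_g - c^{*}k \le \Lambda - c^{*}k$, not $\Lambda + c^{*}k$), and your handling of the $\sigma\to 0$ step---separating the Lebesgue-point property from the density lower bound $\Theta^k\ge 1$---is if anything more careful than the paper's.
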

\begin{proof}
By Hypothesis \hyperlink{$(BG)$}{$(BG)$} and \hyperlink{$(AC)$}{$(AC)$}  we are in the conditions of Theorem \ref{wightedmonotonicitytheorem}, then, for $\xi\in\spt \|V\|$ and for all $0<s<r_0$ we have

\begin{align*}
\frac{d}{ds}\lp\frac1{s^k}\int_{B_g(\xi,s)}h\g{u} d\|V\|\rp
&\ge\int_{B_g(\xi,s)}\frac h{s^{k}}\frac{\partial}{\partial s}\lp\g{u}\rp\lmo \nabla^{\perp}_gu\rmo^2_g\dv\\
&+\frac1{s^k}\int_{\B{\xi}{s}}\g{u}\Ll\nabla_g^S h,u\nabla_g u\Rl_g\dv\\
&+\frac{kc^*}{s^k}\int_{\B{\xi}{s}}h\g{u}\dv\\
&+\frac1{s^{k+1}}\int_{\B{\xi}{s}}\g{u}\Ll hH_g,u\nabla_g u\Rl_g\dv,
\end{align*}

where $\gamma_{\varepsilon}\in C^1(\R)$ is, such that, given $0<\varepsilon<1$,
\[
\gamma_{\varepsilon}(s):=
\begin{cases}
1,\quad if\quad s<\varepsilon\\
0,\quad if\quad s>1,
\end{cases}
\]
and
\[
\gamma'_{\varepsilon}(s)<0\quad if\quad \varepsilon<s<1.
\]
Then

\begin{align*}
\frac d{ds}\lp\frac1{s^k}\int_{\B{\xi}{s}}h\g{u}\dv\rp&\ge\frac1{s^{k+1}}\int_{\B{\xi}{s}}\g{u}\Ll H_gh+\nabla_g^Sh,u\nabla_g u\Rl_g\dv\\
&+\frac{c^*k}{s^k}\int_{\B{\xi}{s}}h\g{u}\dv\\
&\ge-\frac{1}{s^{k+1}}\g{u}\lmo hH_g+\nabla_g^Sh\rmo_gu\dv\\
&+\frac{c^*k}{s^k}\int_{\B{\xi}{s}}h\g{u}\dv,
\end{align*}

therefore

\begin{align*}
\frac d{ds}\lp\frac1{s^k}\int_{\B{\xi}{s}}\g{u}h\dv\rp&\ge-\frac1{s^{k+1}}\int_{\B{\xi}{s}}\g{u}u\lmo\nabla_g^Sh\rmo_g\dv\\
&-\frac1{s^k}\int_{\B{\xi}{s}}\g{u}h\lp\lmo H_g\rmo_g-c^*k\rp\dv.
\end{align*}

Since, by hypothesis we have that $\lmo H_g\rmo_g\le\Lambda$, we have
\begin{align*}
\frac d{ds}\lp\frac1{s^k}\int_{\B{\xi}{s}}\gamma_{\varepsilon}\lp\frac us\rp h d\|V\|\rp&\ge-\frac1{s^{k+1}}\int_{\B{\xi}{s}}\gamma_{\varepsilon}\lp\frac us\rp u\lmo\nabla_g^S h\rmo_gd\|V\|\\
&-\frac{\Lambda+c^*k}{s^k}\int_{\B{\xi}{s}}\gamma_{\varepsilon}\lp \frac us\rp hd\|V\|.
\end{align*}
Now, let $\{\varepsilon_j\}_{j\in\N}$ a sequence of real numbers such that $\varepsilon\uparrow 1$ as $j\to\infty$, and $\{\gamma_{\varepsilon_j}\}_{j\in\N}\subset C^1(\R)$ such that
\[
\gamma_{\varepsilon_j}(s):=
\begin{cases}
1,\quad if\quad s<\varepsilon_j\\
0,\quad if\quad s>1,
\end{cases}
\]
and
\[
\gamma'_{\varepsilon_j}(s)<0,\quad if\quad \varepsilon_j<s<1,
\]
and $\varepsilon_j\to\chi_{]-\infty,1[}$ pointwise from below. Then, reasoning as above we have  (as in the proof of Theorem \ref{wightedmonotonicitytheorem}) that for all $j\in\N$, and for all $0<s<r_0$

\begin{align*}
\frac d{ds}\lp\frac1{s^k}\int_{\B{\xi}{s}}\gamma_{\varepsilon_j}\lp\frac us\rp h d\|V\|\rp&\ge-\frac1{s^{k+1}}\int_{\B{\xi}{s}}\gamma_{\varepsilon_j}\lp\frac us\rp u\lmo\nabla_g^S h\rmo_gd\|V\|\\
&-\frac{\Lambda+c^*k}{s^k}\int_{\B{\xi}{s}}\gamma_{\varepsilon_j}\lp \frac us\rp hd\|V\|.
\end{align*}

Multiply the above inequality by $e^{\tilde{\lambda}(s)}$ and letting $j\to\infty$ we have in distributional sense  in $s$,
\small
\begin{equation}
\begin{split}
e^{\tilde{\lambda}(s)}\frac d{ds}\lp\frac1{s^k}\int_{B_g(\xi,s)} h d\|V\|\rp
&+e^{\tilde{\lambda}(s)}\frac{\Lambda+c^*k}{s^k}\int_{B_g(\xi,s)} hd\|V\|\\
&\ge -e^{\tilde{\lambda}(s)}\frac1{s^{k+1}}\int_{B_g(\xi,s)}u\lmo\nabla_g^S h\rmo_gd\|V\|,
\end{split}
\end{equation}
\normalsize
where
\begin{equation}\label{tildelambda(s)}
\tilde{\lambda}(s):=\lp\Lambda+c^*k\rp s,
\end{equation}
therefore, since $\tilde{\lambda}'(s)=\Lambda+c^*k$, we have 

\begin{equation*}
\frac d{ds}\lp\frac{e^{\tilde{\lambda}(s)}}{s^k}\int_{B_g(\xi,s)} hd\|V\|\rp\ge-\frac{e^{\tilde{\lambda} (s)}}{s^{k+1}}\int_{b_g(\xi,s)} u\lmo\nabla^S_g h\rmo_gd\|V\|.
\end{equation*}

Hence, integrating over $[\sigma,\rho]\subset]0,r_0[$,
\small
\begin{align*}
\frac{e^{\tilde{\lambda}(\rho) }}{\rho^k}\int_{B_g(\xi,\rho)}hd\|V\|&-\frac{e^{\tilde{\lambda}(\sigma)}}{\sigma^k}\int_{B_g(\xi,\sigma)}hd\|V\|=\int_{\sigma}^{\rho}\frac d{ds}\lp\frac{e^{\tilde{\lambda}(s)}}{s^k}\int_{B_g(\xi,s)} hd\|V\|\rp\\
&\ge-\int_{\sigma}^{\rho}\frac{e^{\tilde{\lambda} (s)}}{s^{k+1}}\int_{B_g(\xi,s)}u\lmo\nabla^S_g h\rmo_gd\|V\|\\
&\ge-e^{\tilde{\lambda} (\rho)}\int_{\sigma}^{\rho}\frac{1}{s^{k+1}}\lp\int_{B_g(\xi,s)}u\lmo\nabla^S_g h\rmo_gd\|V\|\rp ds,
\end{align*}
\normalsize
then, rearranging terms, 
\small
\begin{multline}
\frac{1}{\omega_k\sigma_k}\int_{B_g(\xi,\sigma)}h d\|V\|\\
\le e^{\tilde{\lambda} (\rho)}\lp\frac{1}{\omega_k\rho_k}\int_{B_g(\xi,\rho)}h d\|V\|+\frac{1}{\omega_k}\int_{\sigma}^{\rho}\frac{1}{s^{k+1}}\lp\int_{B_g(\xi,s)}u\lmo\nabla^S_g h\rmo_gd\|V\|\rp ds\rp.
\end{multline}
\normalsize
Now, letting $\tau=1/s$ in the inequality above, we have that
\[
\int_{\sigma}^{\rho}\lp\frac{1}{s}\rp^{k+1}\lp\int_{B_g(\xi,s)}u\lmo\nabla^S_g h\rmo_gd\|V\|\rp ds=-\int_{\frac1{\sigma}}^{\frac1{\rho}}\tau^{k-1}\int_{B_g\lp\xi,\frac1{\tau}\rp}u\lmo\nabla^S_g h\rmo_gd\|V\|,
\]
then, if we let for $A\subset M$
\[
\nu(A):=\int_Au\lmo\nabla^S_g h\rmo_gd\|V\|,
\]
since $u$ is an increasing function, $\nu$ defines a measure on $(M,g)$, so by Lemma \ref{riem4.3.5}
\begin{align*}
\int_{\sigma}^{\rho}\frac{1}{s^{k+1}}\lp\int_{B_g(\xi,s)}u\lmo\nabla^S_g h\rmo_gd\|V\|\rp ds&=-
\int_{\frac1{\sigma}}^{\frac1{\rho}}\tau^{k-1}\nu\lp B_g\lp\xi,\frac1{\tau}\rp\rp d\tau\\
&=\int_{\frac1{\rho}}^{\frac1{\sigma}}\tau^{k-1}\nu\lp\left\{x\in M: \frac1{u(x)}>\tau\right\}\rp d\tau\\
&=\frac1k\int_{\left\{\frac1{\rho}<\frac1{u}<\frac1{\sigma}\right\}}\frac1{u(x)^k}-\lp\frac1{\sigma^k}-\frac1{\rho^k}\rp d\nu\\
&=\int_{B_g(\xi,\rho)\setminus B_g(\xi,\sigma)}\frac1{u^k}-\lp\frac1{\sigma^k}-\frac1{\rho^k}\rp d\nu\\
&\le\frac1k\int_{B_g(\xi,\rho)}\frac1{ u^k}d\nu\\
&=\frac1k\int_{B_g(\xi,\rho)}\frac{u\lmo\nabla^S_g h\rmo_g}{u^k}d\|V\|\\
&=\frac1k\int_{B_g(\xi,\rho)}\frac{\lmo\nabla^S_g h\rmo_g}{u^{k-1}}d\|V\|.
\end{align*}
Then
\small
\[
\frac1{\omega_k\sigma^k}\int_{B_g(\xi,\sigma)}h d\|V\|\le e^{\tilde{\lambda}(\rho)}\lp\frac1{\omega_k\rho^k}\int_{B_g(\xi,\rho)}h d\|V\|+\frac1{k\omega_k}\int_{B_g(\xi,\rho)}\frac{\lmo\nabla^S_g h\rmo_g}{r_{\xi}^{k-1}}d\|V\|\rp.
\]

\normalsize

Finally, let $\sigma\downarrow 0$, and the result follows, since, $\xi\in\spt \|V\|$ implies $\Theta^k(\|V\|,\xi)=1$ for $\|V\|$-a.e. $\xi$, therefore $\xi$ is a Lebesgue point $\|V\|$-a.e.
\end{proof}
\begin{theorem}[Poincar\'e-Type Inequality for Intrinsic Varifolds]
Let $(M^n,g)$ be a complete Riemannian manifold satisfying \hyperlink{$(BG)$}{$(BG)$}, let $V:=\rv{S}{\theta}$ a rectifiable varifold satisfying \hyperlink{$(AC)$}{$(AC)$}. Suppose: $h\in C^1(M)$, $h\ge 0$, $B_g(\xi,2\rho)\subset B_g(\xi, r_0)$ for $\xi\in S$ fixed, $\lmo H_g\rmo_g\le\Lambda$ for some $\Lambda>0$, $\theta>1$ $\|V\|$-a.e. in $B_g(\xi,r_0)$ and for some $0<\alpha<1$
\begin{equation}\label{poincarehypothesis}
\begin{split}
\|V\|\lp\left\{ x\in B_g\lp\xi,\rho\rp:h(x) > 0\right\}\rp&\le\omega_k\lp1-\alpha\rp\rho^k\quad and,\\ e^{\lp\Lambda+kc^*\rp\rho}&\le 1+\alpha.
\end{split}
\end{equation} 

Suppose also that, for some constant $\Gamma>0$

\begin{equation}
\|V\|\lp B_g(\xi,2\rho)\rp\le\Gamma\rho^k.
\end{equation}

Then there are constants $\beta:=\beta\lp k,\alpha,r_0,b\rp\in]0,\frac12[$ and $C:=C\lp k,\alpha,r_0,b\rp>0$ such that
\[
\int_{\B{\xi}{2\rho}}h\dv\le C\rho\int_{\B{\xi}{\rho}}\lmo \nabla^S_g h\rmo_g\dv.
\]

\end{theorem}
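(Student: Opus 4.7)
The proof adapts Simon's argument for Theorem 18.6 in \cite{Simon} to the intrinsic Riemannian setting, with Lemma \ref{lemmacondition1ps} replacing the Euclidean pointwise bound derived from monotonicity. The strategy has two stages: first derive an $L^\infty$ bound on $h$ over an inner ball, then convert this $L^\infty$ bound into the $L^1$ Poincar\'e inequality using the smallness of the support of $h$. (The $\beta$ announced in the statement is meant to appear on the left-hand side: the conclusion should read $\int_{B_g(\xi,\beta\rho)} h\,d\|V\| \le C\rho\int_{B_g(\xi,\rho)}|\nabla^S_g h|_g\,d\|V\|$.)

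\textbf{Step 1 (Pointwise reduction).} Fix $\beta\in(0,1/2)$ to be chosen. For every $\eta\in\spt\|V\|\cap B_g(\xi,\beta\rho)$ the ball $B_g(\eta,(1-\beta)\rho)$ lies inside $B_g(\xi,\rho)\subset B_g(\xi,r_0)$, so Lemma \ref{lemmacondition1ps} applied at $\eta$ with radius $(1-\beta)\rho$, together with $e^{(\Lambda+c^*k)\rho}\le 1+\alpha$, gives
\begin{equation*}
h(\eta)\le(1+\alpha)\left[\frac{1}{\omega_k((1-\beta)\rho)^k}\int_{B_g(\eta,(1-\beta)\rho)}\!\! h\,d\|V\|+\frac{1}{k\omega_k}\int_{B_g(\eta,(1-\beta)\rho)}\!\!\frac{|\nabla^S_g h|_g}{r_\eta^{k-1}}\,d\|V\|\right].
\end{equation*}

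\textbf{Step 2 (Absorption; $L^\infty$ bound).} Since $B_g(\eta,(1-\beta)\rho)\subset B_g(\xi,\rho)$, the smallness-of-support hypothesis yields
\begin{equation*}
\int_{B_g(\eta,(1-\beta)\rho)} h\,d\|V\|\le \bar M\cdot\|V\|(\{h>0\}\cap B_g(\xi,\rho))\le(1-\alpha)\omega_k\rho^k \bar M,
\end{equation*}
where $\bar M:=\sup_{\spt\|V\|\cap\overline{B_g(\xi,\rho)}} h$. Thus
\begin{equation*}
h(\eta)\le\frac{1-\alpha^2}{(1-\beta)^k}\bar M+\frac{1+\alpha}{k\omega_k}\int_{B_g(\xi,\rho)}\frac{|\nabla^S_g h|_g(y)}{d_g(\eta,y)^{k-1}}\,d\|V\|(y).
\end{equation*}
Choose $\beta=\beta(k,\alpha)\in(0,1/2)$ so that $(1-\beta)^k>1-\alpha^2$; then $\mu:=1-(1-\alpha^2)/(1-\beta)^k>0$. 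To absorb $\bar M$ one applies the same reasoning at a near-maximizer of $h$, using, if the maximum is attained outside $B_g(\xi,\beta\rho)$, the variable radius $\rho-d_g(\eta,\xi)$ (which satisfies $e^{(\Lambda+c^*k)(\rho-d_g(\eta,\xi))}\le 1+\alpha$). Combined with Proposition \ref{prop1} (applied via Fubini) and the upper density bound $\|V\|(B_g(\xi,2\rho))\le\Gamma\rho^k$ to control the singular kernel $1/r_\eta^{k-1}$ uniformly in $\eta$, this yields
\begin{equation*}
\sup_{\spt\|V\|\cap\overline{B_g(\xi,\beta\rho)}} h\le \frac{C_1(k,\alpha,\Gamma)}{\rho^{k-1}}\int_{B_g(\xi,\rho)}|\nabla^S_g h|_g\,d\|V\|.
\end{equation*}

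\textbf{Step 3 (Integral form).} Finally, using the smallness of support one more time,
\begin{equation*}
\int_{B_g(\xi,\beta\rho)} h\,d\|V\|\le\Big(\sup_{\spt\|V\|\cap\overline{B_g(\xi,\beta\rho)}} h\Big)\cdot\|V\|(\{h>0\}\cap B_g(\xi,\rho))\le(1-\alpha)\omega_k\rho^k\cdot\sup h,
\end{equation*}
and combining with Step 2 gives the desired $\int_{B_g(\xi,\beta\rho)} h\,d\|V\|\le C\rho\int_{B_g(\xi,\rho)}|\nabla^S_g h|_g\,d\|V\|$ with $C=C(k,\alpha,\Gamma,r_0,b)$. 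The main obstacle is Step 2: closing the absorption argument between the two nested balls $B_g(\xi,\beta\rho)\subset B_g(\xi,\rho)$, and handling the singular kernel $1/r_\eta^{k-1}$ uniformly in $\eta$. The kernel estimate is precisely where Proposition \ref{prop1} is needed and where the hypothesis $\|V\|(B_g(\xi,2\rho))\le\Gamma\rho^k$ enters (via $L^p$-monotonicity, Theorem \ref{Thm:LpMonotonicityFormula}, to transfer the density bound to off-center balls).
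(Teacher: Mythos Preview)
Your Step~1 and your use of Proposition~\ref{prop1} to control the singular kernel $r_\eta^{1-k}$ are exactly what the paper does. The gap is in Step~2: the $L^\infty$ absorption does not close. The pointwise bound you obtain for $\eta\in B_g(\xi,\beta\rho)$ has $\bar M=\sup_{B_g(\xi,\rho)}h$ on the right, while taking the sup on the left only gives $\sup_{B_g(\xi,\beta\rho)}h\le \bar M$; the inequality goes the wrong way to absorb. Your proposed fix---moving to a near-maximizer $\eta_0\in B_g(\xi,\rho)$ and shrinking the radius to $\rho-d_g(\eta_0,\xi)$---makes the coefficient in front of $\bar M$ equal to $(1-\alpha^2)\rho^k/(\rho-d_g(\eta_0,\xi))^k$, which blows up as $\eta_0$ approaches $\partial B_g(\xi,\rho)$. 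There is no a~priori reason the maximizer stays in the inner ball, and no iteration of the form $\bar M_s\le\theta\,\bar M_t+G$ with $s<t$ can be telescoped to eliminate $\bar M_1$, since $h$ is not assumed to vanish outside $B_g(\xi,\rho)$.

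The paper avoids this entirely by a truncation/layer-cake argument: one replaces $h$ by $f:=\gamma(h-t_0)$ with $\gamma$ nondecreasing, $\gamma\equiv0$ on $(-\infty,0]$, $0\le\gamma\le1$. Since $f\le1$ pointwise, the first term in Step~1 is bounded by $(1+\alpha)\|V\|(\{h>0\}\cap B_g(\xi,\rho))/(\omega_k(1-\beta)^k\rho^k)\le(1-\alpha^2)/(1-\beta)^k$, with \emph{no} $\bar M$ factor. Choosing $\beta$ so that this is $\le 1-\alpha^2/2$, one gets for every $\eta\in B_g(\xi,\beta\rho)\cap\{h>t_0+\varepsilon\}$ (where $f(\eta)=1$) the clean bound
\[
1\le \frac{2(1+\alpha)}{\alpha^2 k\omega_k}\int_{B_g(\xi,\rho)}\frac{\gamma'(h-t_0)\,|\nabla^S_g h|_g}{r_\eta^{k-1}}\,d\|V\|.
\]
Integrating this in $\eta$ over $B_g(\xi,\beta\rho)\cap\{h>t_0+\varepsilon\}$, swapping the order of integration, and applying Proposition~\ref{prop1} (this is where $\|V\|(B_g(\xi,2\rho))\le\Gamma\rho^k$ enters) yields
\[
\|V\|\big(B_g(\xi,\beta\rho)\cap\{h>t_0+\varepsilon\}\big)\le C\rho\int_{B_g(\xi,\rho)}\gamma'(h-t_0)\,|\nabla^S_g h|_g\,d\|V\|.
\]
Integrating in $t_0$ over $(0,\infty)$ and letting $\varepsilon\downarrow0$ then gives the desired $L^1$ inequality directly, without ever needing an $L^\infty$ bound on $h$.
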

\begin{proof}
Take $\beta\in]0,1/2[$ an arbitrary parameter, to be specified  later and $\tilde{\lambda}(s)$ as in \eqref{tildelambda(s)}.  Applying the previous Lemma with
\[
\eta\in\B{\xi}{\beta\rho}\cap\spt\|V\|,
\]
in place of $\xi$, thence, for all $0<\rho<r_0$
\small
\[
h(\eta)\le e^{\tilde{\lambda}((1-\beta)\rho)}\lp\frac1{\omega_k(1-\beta)^k\rho^k}\int_{B_g(\eta,(1-\beta)\rho)}h d\|V\|+\frac1{k\omega_k}\int_{B_g(\eta,(1-\beta)\rho)}\frac{\lmo\nabla^S_g h\rmo_g}{r_{\eta}^{k-1}}d\|V\|\rp.
\]
Since $\rho/2<(1-\beta)\rho<\rho$, we have $\B{\eta}{(1-\beta)\rho}\subset\B{\xi}{\rho}$ and
\begin{equation}\label{eq5.1}
h(\eta)\le e^{\tilde{\lambda}(\rho)}\lp\frac{1}{\omega_k(1-\beta)^k\rho^k}\int_{\B{\xi}{\rho}}h\dv+\frac1{k\omega^k}\int_{\B{\xi}{\rho}}\frac{\lmo\nabla^S_g h\rmo_g}{r_{\eta}^{k-1}}\dv\rp.
\end{equation}
Now, let $t_0\ge0$ fixed, and $\gamma\in C^1(\R)$ a fixed non-decreasing function with
\[
\gamma(t)=0,\ if\ t\le0\quad and\quad \gamma(t)\le1\ if\ t>0,
\]
and apply \eqref{eq5.1} with $f(x):=\gamma\lp h(x)-t_0\rp$ in place of $h$, then
\small
\[
f(\eta)\le e^{\tilde{\lambda}(\rho)}\lp\frac{1}{\omega_k(1-\beta)^k\rho^k}\int_{\B{\xi}{\rho}}f\dv+\frac1{k\omega^k}\int_{\B{\xi}{\rho}}\frac{\gamma'\lp h-t_0\rp\lmo\nabla^S_g h\rmo_g}{r_{\eta}^{k-1}}\dv\rp.
\]
\normalsize
By hypothesis \eqref{poincarehypothesis} we know that:
\begin{align*}
 e^{\tilde{\lambda}(\rho)}&\le1+\alpha\\
 \intertext{and that}
 \|V\|\lp\left\{x\in\B{\xi}{\rho}:h(x)>0\right\}\rp&\le\omega_k(1-\alpha)\rho^k, 
 \end{align*}

hence, 

\small
\begin{align*}
f(\eta)
&\le\lp1+\alpha\rp\lp\frac{1}{\omega_k(1-\beta)^k\rho^k}\int_{\B{\xi}{\rho}}f\dv+\frac1{k\omega^k}\int_{\B{\xi}{\rho}}\frac{\gamma'\lp h-t_0\rp\lmo\nabla^S_g h\rmo_g}{r_{\eta}^{k-1}}\dv\rp\\
&\le\lp1+\alpha\rp\lp\frac{\|V\|\lp\left\{h>0\right\}\cap\B{\xi}{\rho}\rp}{\omega_k(1-\beta)^k\rho^k}+\frac1{k\omega^k}\int_{\B{\xi}{\rho}}\frac{\gamma'\lp h-t_0\rp\lmo\nabla^S_g h\rmo_g}{r_{\eta}^{k-1}}\dv\rp\\
&\le\frac{1-\alpha^2}{(1-\beta)^k}+\frac{1+\alpha}{k\omega_k}\int_{\B{\xi}{\rho}}\frac{\gamma'\lp h-t_0\rp\lmo\nabla^S_g h\rmo_g}{r_{\eta}^{k-1}}\dv.
\end{align*}
\normalsize
Now, take $\beta:=\beta(k,\alpha)\in]0,1/2[$ small enough, such that
\[
\frac{1-\alpha^2}{(1-\beta)^k}\le1-\frac{\alpha^2}2,
\]
then
\begin{equation}\label{eq5.3}
\frac{\alpha^2}2+\lp f(\eta)-1\rp\le\frac{\lp1+\alpha\rp}{k\omega_k}\int_{\B{\xi}{\rho}}\frac{\gamma'\lp h(x)-t_0\rp\lmo\nabla^S_g h(x)\rmo_g}{r_{\eta}^{k-1}}\dv(x).
\end{equation}
Therefore, for any $\eta\in\B{\xi}{\beta\rho}\cap\spt\|V\|$ such that $f(\eta)\ge1$,
\begin{equation}\label{eq5.2}
1\le\frac{2(1+\alpha)}{\alpha^2k\omega_k}\int_{\B{\xi}{\rho}}\frac{\gamma'\lp h(x)-t_0\rp\lmo\nabla^S_g h(x)\rmo_g}{r_{\eta}^{k-1}}\dv(x).
\end{equation}
Let $\varepsilon>0$ given,  and choose $\gamma$ such that, $\gamma(t)\equiv 1$ for $t\ge\varepsilon$, then $f\equiv 1$ for $\eta\in\B{\xi}{\beta\rho}\cap E_{t_0+\varepsilon}$, where $E_{\tau}:=\left\{x\in\spt\|V\|: h(x)>\tau\right\}$, and so \eqref{eq5.3} remains valid. Then integrating in booth sides over $\B{\xi}{\beta\rho}\cap E_{t_0+\varepsilon}$, we get
\small
\begin{align*}
\|V\|\lp\B{\xi}{\beta\rho}\cap E_{t_0+\varepsilon}\rp
&\le\frac{2(1+\alpha)}{\alpha^2k\omega_k}\int_{\B{\xi}{\beta\rho}\cap E_{t_0+\varepsilon}}\lp\int_{\B{\xi}{\rho}}\frac{\gamma'\lp h(x)-t_0\rp\lmo\nabla^S_g h(x)\rmo_g}{r_{\eta}^{k-1}}\dv(x)\rp \dv(\eta)\\
&=\frac{2(1+\alpha)}{\alpha^2k\omega_k}\int_{\B{\xi}{\rho}}\gamma'\lp h(x)-t_0\rp\lmo\nabla^S_g h(x)\rmo_g\lp\int_{\B{\xi}{\beta\rho}\cap E_{t_0+\varepsilon}}\frac{1}{r_{\eta}^{k-1}}\dv(\eta)\rp \dv(x)\\
&\le\frac{2(1+\alpha)}{\alpha^2k\omega_k}\int_{\B{\xi}{\rho}}\gamma'\lp h(x)-t_0\rp\lmo\nabla^S_g h(x)\rmo_g\lp\int_{\B{\xi}{\rho}}\frac{1}{r_{\eta}^{k-1}}\dv(\eta)\rp \dv(x).
\end{align*}
\normalsize
Now, since by hypothesis $\|V\|\lp\B{\xi}{2\rho}\rp\le\Gamma\rho^k$, we have, by Proposition \ref{prop1} for all $0<\rho<r_0$ (notice that $\rho$ has to satisfy $0<\rho<2\rho<r_0$)
\[
\int_{\B{\xi}{\rho}}\frac1{r^{k-1}_{\eta}}\dv(\eta)\le k\Gamma\rho,
\]
then
\[
\|V\|\lp\B{\xi}{\beta\rho}\cap E_{t_0+\varepsilon}\rp\le\lp\frac{2(1+\alpha)\Gamma}{\alpha^2\omega_k}\rp\rho\int_{\B{\xi}{\rho}}\gamma'\lp h(x)-t_0\rp\lmo\nabla^S_g h(x)\rmo_g\dv(x).
\]
Integrating over $[0,\infty[$ in $t_0$,
\small
\begin{align*}
\int_0^{\infty}\|V\|\lp\B{\xi}{\beta\rho}\cap E_{t_0+\varepsilon}\rp dt_0
&\le\lp\frac{2(1+\alpha)\Gamma}{\alpha^2\omega_k}\rp\rho\int_0^{\infty}\lp\int_{\B{\xi}{\rho}}\gamma'\lp h(x)-t_0\rp\lmo\nabla^S_g h(x)\rmo_g\dv(x)\rp dt_0\\
&=\lp\frac{2(1+\alpha)\Gamma}{\alpha^2\omega_k}\rp\rho\int_{0}^{\infty}\lp \int_{\B{\xi}{\rho}}-\frac{\partial}{\partial t_0}\lp\gamma(h(x)-t_0)\rp\lmo\nabla^S_g h(x)\rmo_g\dv(x)\rp dt_0\\
&= \lp\frac{2(1+\alpha)\Gamma}{\alpha^2\omega_k}\rp\rho\int_{\B{\xi}{\rho}}\lmo\nabla^S_g h(x)\rmo_g\lp-\frac{d}{ds}\int_0^{\infty}\gamma(h(x)-t_0)dt_0\rp\dv(x)\\
&\le\lp\frac{2(1+\alpha)\Gamma}{\alpha^2\omega_k}\rp\rho\int_{\B{\xi}{\rho}}\lmo\nabla^S_g h(x)\rmo_g\gamma(h(x))\dv(x)\\
&\le\lp\frac{2(1+\alpha)\Gamma}{\alpha^2\omega_k}\rp\rho\int_{\B{\xi}{\rho}}\lmo\nabla^S_g h(x)\rmo_g\dv(x).
\end{align*}
\normalsize
To calculate the left term, let $\tilde{A}_{\tau}:=\left\{x\in\spt\|V\|:(h-\varepsilon)(x)>\tau\right\}=A_{\tau+\varepsilon}$, then by Lemma \ref{riem4.3.5}
\begin{align*}
\int_0^{\infty}\|V\|\lp\B{\xi}{\beta\rho}\cap E_{t_0+\varepsilon}\rp&=\int_0^{\infty}\|V\|\llcorner\lp\B{\xi}{\beta\rho}\rp\lp\tilde{A}_{\tau}\rp dt_0\\
&=\int_{\tilde{A}_0} \lp h-\varepsilon\rp\dv\llcorner\lp\B{\xi}{\beta\rho}\rp\\
&=\int_{\B{\xi}{\beta\rho}\cap A_{\varepsilon}}\lp h-\varepsilon\rp \dv.
\end{align*}
Sumarizing,
\[
\int_{\B{\xi}{\beta\rho}\cap A_{\varepsilon}}\lp h(x)-\varepsilon\rp \dv(x)\le C\rho\int_{\B{\xi}{\rho}}\lmo\nabla^S_g h(x)\rmo_g\dv(x),
\]
where
\[
C:=C(k,\alpha,\Gamma,r_0):=\frac{2(1+\alpha)\Gamma}{\alpha^2\omega_k}.
\]
Finally, letting $\varepsilon\downarrow 0$, the result follows.
\end{proof}
\subsection{Sobolev type Inequality}
\begin{lemma}\label{lemmacondition2ps}
Assume $(M^n,g)$ satisfying \hyperlink{$(BG)$}{$(BG)$} and $V:=\rv{S}{\theta}$, with $S\subset M$ $\Ha^k_g$-rectifiable, and $\theta\in L^1_{loc}(\|V\|)$, such that \hyperlink{$(AC)$}{$(AC)$} is satisfied. Let $h\in C^1(M)$ non-negative, then for all $\xi\in\spt \|V\|$ and for all $0<\rho<r_0$
\small
\begin{equation}\label{condition2ps}
\frac1{\omega_k\sigma^k}\int_{\B{\xi}{\sigma}}h\dv\le\frac1{\omega_k\rho^k}\int_{\B{\xi}{\rho}}h\dv+\int_{\sigma}^{\rho}\frac1{s^k}\lp\int_{\B{\xi}{s}}\lmo\nabla^S_g h\rmo_g+h\lp\lmo H_g\rmo_g-c^*k\rp\dv\rp ds
\end{equation}
\normalsize
where $c^*$ is as in Remark \ref{behaivourc}.
\end{lemma}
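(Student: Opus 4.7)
The plan is to read Lemma \ref{lemmacondition2ps} as an immediate rearrangement of the Weighted Monotonicity Formula, Corollary \ref{weigthedmonotonicitycorollary1}, in the special case where $V$ is rectifiable; no fresh cut-off or integration by parts argument is required.

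First I would invoke Corollary \ref{weigthedmonotonicitycorollary1} with the given $h\in C^1(M)$, $h\ge 0$, at radii $0<\sigma<\rho<r_0$. This produces
\begin{align*}
\frac{1}{\rho^k}\int_{B_g(\xi,\rho)} h\,d\|V\| - \frac{1}{\sigma^k}\int_{B_g(\xi,\sigma)} h\,d\|V\|
&\ge -\int_\sigma^\rho \frac{1}{s^k}\int_{G_k(B_g(\xi,s))} |\nabla^T h|\,dV\,ds \\
&\quad -\int_\sigma^\rho \frac{1}{s^k}\int_{B_g(\xi,s)} h\bigl(|H_g|_g - kc_1^*\bigr)\,d\|V\|\,ds \\
&\quad + \bigl(J(\rho) - J(\sigma)\bigr),
\end{align*}
with $J(s)$ as in Theorem \ref{wightedmonotonicitytheorem}.

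Next I would observe that $J$ is monotone non-decreasing, because its integrand $h(y)|\nabla^{T^\bot}u|_g^2/r_\xi^k$ is non-negative and the domain of integration $G_k(B_g(\xi,s))$ grows with $s$. Therefore $J(\rho)-J(\sigma)\ge 0$, and this contribution may be dropped without spoiling the inequality. The remaining bundle integral $\int_{G_k(B_g(\xi,s))}|\nabla^T h|\,dV$ would then be converted to an integral against the weight measure using the rectifiable structure of $V=\rv{S}{\theta}$: by the Disintegration Theorem (Theorem \ref{DVThm}) together with the $\mathcal{H}^k_g$-a.e. existence of the approximate tangent plane $T_xS$, the fiber measures $\pi_x$ are Dirac-concentrated at $T_xS$ for $\|V\|$-a.e. $x\in S$, so that
$$\int_{G_k(B_g(\xi,s))}|\nabla^T h|\,dV(y,T) \;=\; \int_{B_g(\xi,s)}|\nabla^S_g h|_g\,d\|V\|(y).$$

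To conclude, I would move the two negative contributions to the right-hand side of the rearranged inequality, divide through by $\omega_k$, and identify $c^*=c_1^*$ according to Remark \ref{behaivourc}. These purely algebraic manipulations yield precisely \eqref{condition2ps}. Since the argument is essentially mechanical, there is no serious obstacle; the only step requiring care is the bundle-to-weight-measure identification of the gradient term, but that is routine once the rectifiability hypothesis and the disintegration theorem are invoked.
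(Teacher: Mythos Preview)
Your proposal is correct and follows essentially the same route as the paper: both arguments rest on the Fundamental Weighted Monotonicity Inequality (Theorem~\ref{wightedmonotonicitytheorem}), drop the non-negative tilt contribution $J(\rho)-J(\sigma)$, and use rectifiability to replace the fiberwise gradient $\nabla^T h$ by $\nabla^S_g h$. The only difference is packaging: the paper re-runs the mollification and integration from the differential form, while you invoke the already-integrated Corollary~\ref{weigthedmonotonicitycorollary1} and rearrange, which is the more economical way to write it.
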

\begin{proof}
By Hypothesis \hyperlink{$(BG)$}{$(BG)$} and \hyperlink{$(AC)$}{$(AC)$}  we are in the conditions of Theorem \ref{wightedmonotonicitytheorem}, then, for $\xi\in\spt \|V\|$ and for all $0<s<r_0$ 

\begin{align*}
\frac{d}{ds}\lp\frac1{s^k}\int_{B_g(\xi,s)}h\g{u} d\|V\|\rp
&\ge\int_{B_g(\xi,s)}\frac h{s^{k}}\frac{\partial}{\partial s}\lp\g{u}\rp\lmo \nabla^{\perp}_gu\rmo^2_g\dv\\
&+\frac1{s^k}\int_{\B{\xi}{s}}\g{u}\Ll\nabla_g^S h,u\nabla_g u\Rl_g\dv\\
&+\frac{kc^*}{s^k}\int_{\B{\xi}{s}}h\g{u}\dv\\
&+\frac1{s^{k+1}}\int_{\B{\xi}{s}}\g{u}\Ll H_gh,u\nabla_g u\Rl_g\dv,
\end{align*}

where $\gamma_{\varepsilon}\in C^1(\R)$ is, such that, given $0<\varepsilon<1$,
\[
\gamma_{\varepsilon}(s):=
\begin{cases}
1,\quad if\quad s<\varepsilon\\
0,\quad if\quad s>1,
\end{cases}
\]
and
\[
\gamma'_{\varepsilon}(s)<0\quad if\quad \varepsilon<s<1.
\]
Then

\begin{align*}
\frac d{ds}\lp\frac1{s^k}\int_{\B{\xi}{s}}h\g{u}\dv\rp
&\ge\frac1{s^{k+1}}\int_{\B{\xi}{s}}\g{u}\Ll H_gh+\nabla^S_g h,u\nabla_g u\Rl_g\dv\\
&+\frac{c^*k}{s^k}\int_{\B{\xi}{s}}h\g{u}\dv\\
&\ge-\frac{1}{s^{k+1}}\int_{\B{\xi}{s}}\g{u}\lmo H_gh+\nabla^S_g h\rmo_gu\dv\\
&+\frac{c^*k}{s^k}\int_{\B{\xi}{s}}h\g{u}\dv,
\end{align*}

therefore
\begin{equation*}
\frac d{ds}\lp\frac1{s^k}\int_{\B{\xi}{s}}\g{u}h\dv\rp\ge-\frac1{s^{k}}\int_{\B{\xi}{s}}\g{u}\lp\lmo\nabla^S_g h\rmo_g+h\lp\lmo H_g\rmo_g-c^*k\rp\rp\dv\\.
\end{equation*}
Now, let $\{\varepsilon_j\}_{j\in\N}$ a sequence of real numbers such that $\varepsilon\uparrow 1$ as $j\to\infty$, and $\{\gamma_{\varepsilon_j}\}_{j\in\N}\subset C^1(\R)$ such that
\[
\gamma_{\varepsilon_j}(s):=
\begin{cases}
1,\quad if\quad s<\varepsilon_j\\
0,\quad if\quad s>1,
\end{cases}
\]
and
\[
\gamma'_{\varepsilon_j}(s)<0\quad if\quad \varepsilon_j<s<1,
\]
and $\varepsilon_j\to\chi_{]-\infty,1[}$ pointwise from below. Then, reasoning as in the proof of Theorem \eqref{wightedmonotonicitytheorem}, for all $j\in\N$, and for all $0<s<r_0$
\small
\begin{equation*}
\frac d{ds}\lp\frac1{s^k}\int_{\B{\xi}{s}}\gamma_{\varepsilon_j}\lp\frac us\rp h\dv\rp\ge-\frac1{s^{k}}\int_{\B{\xi}{s}}\gamma_{\varepsilon_j}\lp\frac us\rp\lp\lmo\nabla^S_g h\rmo_g+h\lp\lmo H_g\rmo_g-c^*k\rp\rp\dv\\.
\end{equation*}
\normalsize
Letting $j\to\infty$ we have in distributional sense  in $s$,
\begin{equation*}
\frac d{ds}\lp\frac1{s^k}\int_{\B{\xi}{s}}h\dv\rp\ge-\frac1{s^{k}}\int_{\B{\xi}{s}}\lp\lmo\nabla^S_g h\rmo_g+h\lp\lmo H_g\rmo_g-c^*k\rp\rp\dv\\.
\end{equation*}
Finally, integrating over $[\sigma,\rho]\subset ]0,r_0[$ we have 
\small
\begin{align*}
\int_{\sigma}^{\rho}\frac d{ds}\lp\frac1{s^k}\int_{\B{\xi}{s}}h\dv\rp&=\frac1{\rho^k}\int_{\B{\xi}{\rho}}h\dv-\frac1{\sigma^k}\int_{\B{\xi}{\sigma}}h\dv\\
&\ge-\int_{\sigma}^{\rho}\lp\frac1{s^{k}}\int_{\B{\xi}{s}}\lp\lmo\nabla^S_g h\rmo_g+h\lp\lmo H_g\rmo_g-c^*k\rp\rp\dv\rp ds.
\end{align*}
\normalsize
Therefore,
\small
\[
\frac1{\omega_k\sigma^k}\int_{\B{\xi}{\sigma}}h\dv\le\frac1{\omega_k\rho^k}\int_{\B{\xi}{\rho}}h\dv+\int_{\sigma}^{\rho}\frac1{s^k}\lp\int_{\B{\xi}{s}}\lmo\nabla^S_g h\rmo_g+h\lp\lmo H_g\rmo_g-c^*k\rp\dv\rp ds.
\]
\normalsize
\end{proof}
\begin{lemma}\label{lemmacomputation}
Suppse $f$ and $g$ are bounded non-decreasing functions on $]0,\infty[$, and
\begin{equation}\label{eq5.4}
1\leq\frac1{\sigma^k}f(\sigma)\le\frac1{\rho^k}f(\rho)+\int_{0}^{\rho}\frac1{s^k}g(s)ds,
\end{equation}
where $0<\sigma<\rho<\infty$. Then, there exists $\rho\in]0,\rho_0[$ such that
\[
f(\rho)\le\frac125^k\rho_0g(\rho),
\]
where
\[
\rho_0=2\lp\lim_{\rho\to\infty}f(\rho)\rp^{\frac1k}.
\]
\end{lemma}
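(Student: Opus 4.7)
The plan is to argue by contradiction. Suppose that for every $\rho\in]0,\rho_{0}[$ the reverse inequality $f(\rho)>\tfrac{1}{2}5^{k}\rho_{0}\,g(\rho)$ holds, equivalently
\[
g(\rho)<\frac{2f(\rho)}{5^{k}\rho_{0}}\qquad\forall\rho\in]0,\rho_{0}[.
\]
Writing $M:=\lim_{\rho\to\infty}f(\rho)$, the definition of $\rho_{0}$ gives $\rho_{0}^{k}=2^{k}M$, hence $f(\rho_{0})/\rho_{0}^{k}\le M/\rho_{0}^{k}=2^{-k}$.

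I would then apply the standing hypothesis \eqref{eq5.4} with $\rho=\rho_{0}$ and an arbitrary $\sigma\in]0,\rho_{0}[$. Denoting $I:=\int_{0}^{\rho_{0}}g(s)s^{-k}\,ds$, this extracts two pieces of information at once:
\[
1\le\frac{f(\sigma)}{\sigma^{k}}\le 2^{-k}+I\qquad\text{for every }\sigma\in]0,\rho_{0}[.
\]
The outer chain immediately yields the lower bound $I\ge 1-2^{-k}\ge \tfrac{1}{2}$ (valid for $k\ge 1$), while the rightmost inequality gives a uniform pointwise bound on $f(\sigma)/\sigma^{k}$ throughout the whole interval $]0,\rho_{0}[$. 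This second upgrade is the crucial one, since a priori $f(s)/s^{k}$ could blow up as $s\downarrow 0$ (we only know $f$ is bounded, not that it is of order $s^{k}$ near zero).

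With that bound in hand, I insert the contradiction assumption into the definition of $I$:
\[
I<\frac{2}{5^{k}\rho_{0}}\int_{0}^{\rho_{0}}\frac{f(s)}{s^{k}}\,ds\le\frac{2}{5^{k}\rho_{0}}\cdot(2^{-k}+I)\cdot\rho_{0}=\frac{2\,(2^{-k}+I)}{5^{k}}.
\]
Rearranging gives $(5^{k}-2)\,I<2^{1-k}$, so $I<2^{1-k}/(5^{k}-2)$, which is at most $\tfrac{1}{3}$ for every $k\ge 1$. This is incompatible with $I\ge\tfrac{1}{2}$, delivering the desired contradiction.

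The main obstacle is conceptual rather than computational: one must set up the argument so that the integral $I$ ends up bounding itself. The trick is that the choice $\rho=\rho_{0}$ in the reference inequality, together with the specific coefficient $\tfrac{1}{2}5^{k}\rho_{0}$ in the conclusion, are precisely what make the two resulting constraints on $I$ incompatible. Any significant tightening of the conclusion would prevent the circular estimate from closing, which is presumably why the factor $5^{k}$ (and not, say, $2^{k}$) shows up in the statement.
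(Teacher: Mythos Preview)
Your argument is correct and follows the same overall strategy as the paper: argue by contradiction and feed the assumed lower bound on $f$ back into the integral on the right of \eqref{eq5.4} so that the quantity ends up bounding itself. The execution, however, differs in a useful way. The paper negates the slightly stronger conclusion $f(5\rho)\le\tfrac12 5^k\rho_0 g(\rho)$ (which is what is actually used in the Sobolev application), substitutes $g(s)<\tfrac{2}{5^k\rho_0}f(5s)$, changes variables to produce $\int_0^{5\rho_0}s^{-k}f(s)\,ds$, splits at $\rho_0$, and closes a self--referential estimate for $S:=\sup_{0<\sigma<\rho_0}\sigma^{-k}f(\sigma)$; the final contradiction $1\le S<2^{2-k}$ only bites for $k\ge 2$. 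You instead keep everything on $]0,\rho_0[$, read off from \eqref{eq5.4} both a lower bound $I\ge 1-2^{-k}$ and the uniform pointwise control $\sigma^{-k}f(\sigma)\le 2^{-k}+I$, and then let $I$ bound itself directly: $I<2^{1-k}/(5^k-2)$, which already contradicts $I\ge 1-2^{-k}$ for every $k\ge 1$. So your version is shorter, avoids the $5\rho$ rescaling and the interval splitting, and removes the restriction $k\ge 2$; the price is that you literally prove only the form stated (with $f(\rho)$), whereas the paper's computation yields the $f(5\rho)$ version used downstream, though of course that follows from yours by monotonicity of $f$. Both arguments tacitly assume $I=\int_0^{\rho_0}s^{-k}g(s)\,ds<\infty$ in order to rearrange; this is harmless in the intended application but is worth flagging as an implicit hypothesis.
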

\begin{proof}
We argue by contradiction. Assume that for every $\rho\in]0,\rho_0[$
\[
f(5\rho)>\frac12 5^k\rho_0g(\rho),
\]
then, by \eqref{eq5.4},
\begin{equation}\label{eq5.5}
\begin{split}
1\le\sup_{0<\sigma<\rho_0}\frac1{\sigma^k}f(\sigma)&\le\frac1{\rho_0^k}f(\rho_0)+\int_0^{\rho_0}\frac1{s^k}g(s)ds\\
&<\frac1{\rho_0^k}f(\rho_0)+\frac2{5^k\rho_0}\int_0^{\rho_0}\frac1{s^k}f(5s)ds\\
&=\frac1{\rho_0^k}f(\rho_0)+\frac2{5^k\rho_0}\int_0^{5\rho_0}\frac{5^k}{s^k}f(s)\frac{ds}5\\
&=\frac1{\rho_0^k}f(\rho_0)+\frac2{5\rho_0}\lp\int_0^{\rho_0}\frac1{s^k}f(s)ds+\int_{\rho_0}^{5\rho_0}\frac1{s^k}f(s)ds\rp\\
&\le\frac1{\rho_0^k}f(\rho_0)+\frac25\sup_{0<s<\rho_0}\frac1{s^k}f(s)+\frac2{5\rho_0}\lim_{s\to\infty}f(s)\int_{\rho_0}^{5\rho_0}\frac1{s^k}ds\\
&\le\frac1{\rho_0^k}\lim_{s\to\infty}f(s)+\frac25\sup_{0<\sigma<\rho_0}\frac1{\sigma^k}f(\sigma)+\lp\frac2{5(1-n)\rho_0}\lp\left.\frac1{s^{k-1}}\rmo_{\rho_0}^{5\rho_0}\rp\lim_{s\to\infty}f(s)\rp\\
&\le\frac1{\rho_0^k}\lim_{s\to\infty}f(s)+\frac2{5(n-1)}\frac1{\rho_0^k}\lim_{s\to\infty}f(s)+\frac25\sup_{0<\sigma<\rho_0}\frac1{\sigma^k}f(\sigma)\\
&\le\lp1+\frac25\rp\frac1{\rho_0^k}\lim_{s\to\infty}f(s)+\frac12\sup_{0<\sigma<\rho_0}\frac1{\sigma^k}f(\sigma)\\
&\le\lp1+\frac12\rp\frac1{\rho_0^k}\lim_{s\to\infty}f(s)+\frac12\sup_{0<\sigma<\rho_0}\frac1{\sigma^k}f(\sigma),
\end{split}
\end{equation}
then,
\[
\sup_{0<\sigma<\rho_0}\frac1{\sigma^k}f(\sigma)<3\frac1{\rho_0^k}\lim_{s\to\infty}f(s).
\]
Hence
\[
\sup_{0<\sigma<\rho_0}\frac1{\sigma^k}f(\sigma)<\frac12\lp\sup_{0<\sigma<\rho_0}\frac1{\sigma^k}f(\sigma)+3\frac1{\rho_0^k}\lim_{s\to\infty}f(s)\rp<3\frac1{\rho_0^k}\lim_{s\to\infty}f(s)<4\frac1{\rho_0^k}\lim_{s\to\infty}f(s),
\]
and then, by \eqref{eq5.5} we have,
\[
1\le\sup_{0<\sigma<\rho_0}\frac1{\sigma^k}f(\sigma)<\frac12\lp\sup_{0<\sigma<\rho_0}\frac1{\sigma^k}f(\sigma)+3\frac1{\rho_0^k}\lim_{s\to\infty}f(s)\rp<4\frac1{\rho_0^k}\lim_{s\to\infty}f(s).
\]
Finally, since
\[
\rho_0^k=2^k\lim_{s\to\infty}f(s),
\]
and dividing by $2$, we have
\[
\frac12\le\frac12\sup_{0<\sigma<\rho_0}f(\sigma)<\frac1{2^{k-1}},
\]
which clearly is a contradiction sonce $k\ge2$.
\end{proof}
\begin{theorem}[Sobolev-Type Inequality for Intrinsic Varifolds]
Let $(M^n,g)$ be a complete Riemannian manifold satisfying \hyperlink{$(BG)$}{$(BG)$}, let $V=\rv{S}{\theta}$ a rectifiable varifold satisfying \hyperlink{$(AC)$}{$(AC)$}. Suppose $h\in C^1_0(S)$ non negative, and $\theta\ge1$ $\|V\|$-a.e. in $S$. Then there exists $C:=C(k)>0$ such that
\begin{equation}\label{Eq:MichaelSimonInManifolds}
\lp\int_S h^{\frac k{k-1}}\dv\rp^{\frac{k-1}k}\le C\int_S\lp\lmo\nabla_g^S h\rmo_g+h\lp\lmo H_g\rmo_g-c^*k\rp\rp\dv.
\end{equation}
\end{theorem}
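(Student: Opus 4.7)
The plan is to follow the Michael-Simon strategy, combining Lemma \ref{lemmacondition2ps} (the integral monotonicity estimate for $C^1$ test functions) with Lemma \ref{lemmacomputation} (the technical covering-radius lemma). The structure has three stages: (i) apply Lemma \ref{lemmacondition2ps} to a smooth approximation of $\chi_{\{h>t\}}$ to produce a pointwise density-ratio estimate on the superlevel set $\{h > t\}$; (ii) invoke Lemma \ref{lemmacomputation} together with a Vitali-type $5$-covering of $\{h > t\}\cap\spt\|V\|$ to deduce a weak isoperimetric-type inequality for a.e.\ $t$; and (iii) integrate in $t$ via the layer-cake formula together with the coarea formula on the rectifiable set $S$ and a Lorentz-$L^p$ embedding, yielding the strong Sobolev inequality. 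The hypothesis $\theta \geq 1$ is essential in the $\sigma \downarrow 0$ limit, providing the lower bound on the density.

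For fixed $t > 0$, apply Lemma \ref{lemmacondition2ps} with $\tilde h = \gamma_\varepsilon(h-t)$, where $\gamma_\varepsilon \in C^1(\R)$ is non-decreasing, vanishes on $]-\infty,0]$, and equals $1$ on $[\varepsilon,\infty[$. Letting first $\sigma \downarrow 0$ (using $\theta(\xi)\geq 1$ for $\xi \in \{h > t\}\cap\spt\|V\|$) and then $\varepsilon \downarrow 0$ (with $\int_{\B{\xi}{s}}\gamma_\varepsilon'(h-t)\lmo\nabla^S_g h\rmo_g\dv \to \Ha^{k-1}_g(\B{\xi}{s}\cap\{h=t\})$ for a.e.\ $t$, by the coarea formula for Lipschitz functions on the rectifiable set $S$), we obtain for $0 < \rho < r_0$
\[
\omega_k \leq \frac{\|V\|(\B{\xi}{\rho}\cap\{h>t\})}{\rho^k} + \int_0^\rho \frac{1}{s^k}\lp\Ha^{k-1}_g(\B{\xi}{s}\cap\{h=t\}) + \int_{\B{\xi}{s}\cap\{h>t\}}(\lmo H_g\rmo_g - c^*k)\,\dv\rp ds.
\]
Setting $f(\rho):=\omega_k^{-1}\|V\|(\B{\xi}{\rho}\cap\{h>t\})$ and $g(\rho):=\omega_k^{-1}[\Ha^{k-1}_g(\B{\xi}{\rho}\cap\{h=t\}) + \int_{\B{\xi}{\rho}\cap\{h>t\}}(\lmo H_g\rmo_g - c^*k)\dv]$, both bounded non-decreasing, the hypotheses of Lemma \ref{lemmacomputation} hold with $\rho_0 = 2(\|V\|(\{h>t\})/\omega_k)^{1/k}$, producing a radius $\rho_\xi \in\,]0,\rho_0[$ such that $f(\rho_\xi) \leq \tfrac{5^k}{2}\rho_0\,g(\rho_\xi)$.

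A Vitali-type $5$-covering extracts a pairwise disjoint subfamily $\{\B{\xi_i}{\rho_{\xi_i}}\}$ whose $5$-dilations still cover $\{h>t\}\cap\spt\|V\|$. Summing the per-point estimate above, using disjointness so that $\sum_i g(\rho_{\xi_i}) \leq \omega_k^{-1}[\Ha^{k-1}_g(\{h=t\}) + \int_{\{h>t\}}(\lmo H_g\rmo_g - c^*k)\dv]$, together with the $L^p$-monotonicity of Theorem \ref{Thm:LpMonotonicityFormula} to bound $\|V\|(\B{\xi_i}{5\rho_{\xi_i}}) \leq C\rho_{\xi_i}^k$, yields the weak isoperimetric-type inequality
\[
\|V\|(\{h>t\})^{(k-1)/k} \leq C(k)\lp\Ha^{k-1}_g(\{h=t\}) + \int_{\{h>t\}}(\lmo H_g\rmo_g - c^*k)\dv\rp, \quad \text{a.e.\ } t > 0.
\]
Integrating in $t$ over $]0,\infty[$ and invoking the coarea formula $\int_0^\infty \Ha^{k-1}_g(\{h=t\})\,dt = \int_S \lmo\nabla^S_g h\rmo_g\dv$, Cavalieri's identity $\int_0^\infty \int_{\{h>t\}}(\lmo H_g\rmo_g - c^*k)\dv\,dt = \int_S h(\lmo H_g\rmo_g - c^*k)\dv$, and the Lorentz embedding $(\int_S h^{k/(k-1)}\dv)^{(k-1)/k} \leq \int_0^\infty \|V\|(\{h>t\})^{(k-1)/k}\,dt$ (from Minkowski's integral inequality), we obtain the claimed Sobolev inequality \eqref{Eq:MichaelSimonInManifolds}. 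The principal obstacles are (a) rigorously justifying the $\varepsilon \downarrow 0$ passage identifying the limit of $\gamma_\varepsilon'(h-t)\lmo\nabla^S_g h\rmo_g$ with the $(k-1)$-Hausdorff measure of $\{h=t\}$, which requires a coarea formula for Lipschitz functions on $\Ha^k_g$-rectifiable sets in the intrinsic setting; and (b) tracking constants through the covering/summation to obtain a final constant depending only on $k$, rather than on the ambient geometric data $n, b, r_0, inj_M$, as claimed in contribution $(iv)$ of the introduction.
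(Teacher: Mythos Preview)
Your overall strategy---pass to superlevel sets, derive an isoperimetric-type inequality for each level via Lemma~\ref{lemmacomputation} and a Vitali $5$-covering, then integrate in $t$ using coarea and the Lorentz embedding---is a legitimate Michael--Simon route, but it is \emph{not} the route the paper takes. The paper never passes to level sets of $h$: it applies Lemma~\ref{lemmacomputation} directly with $f(\rho)=\omega_k^{-1}\int_{\B{\xi}{\rho}}h\,\dv$ and $g(\rho)=\omega_k^{-1}\int_{\B{\xi}{\rho}}(\lmo\nabla^S_g h\rmo_g+h(\lmo H_g\rmo_g-c^*k))\,\dv$ on the set $\{h\ge1\}$, covers, and obtains the $L^1$-type estimate $\int_{\{h\ge1\}}h\,\dv\le 5^k(\omega_k^{-1}\int h\,\dv)^{1/k}\int(\lmo\nabla^S_g h\rmo_g+h(\lmo H_g\rmo_g-c^*k))\,\dv$. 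Only \emph{after} this does it substitute $\gamma_\varepsilon(h-t_0)$ for $h$, multiply by $(t_0+\varepsilon)^{1/(k-1)}$, and integrate in $t_0$ using nothing more than Fubini and $\int_0^\infty\gamma_\varepsilon'(h-t_0)\,dt_0=\gamma_\varepsilon(h)\le1$. This sidesteps the coarea formula entirely; your obstacle~(a) simply does not arise in the paper's argument.

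There is also a genuine gap in your covering step. You read Lemma~\ref{lemmacomputation} as producing $\rho_\xi$ with $f(\rho_\xi)\le\tfrac{5^k}{2}\rho_0\,g(\rho_\xi)$, and then invoke Theorem~\ref{Thm:LpMonotonicityFormula} to bound $\|V\|(\B{\xi_i}{5\rho_{\xi_i}})\le C\rho_{\xi_i}^k$. This does not close: that bound controls the \emph{full} mass of the $5$-ball, not $\|V\|(\B{\xi_i}{5\rho_{\xi_i}}\cap\{h>t\})$, and there is no way to recover $\sum_i\rho_{\xi_i}^k$ from the disjoint-ball data you have. Moreover, the constant in Theorem~\ref{Thm:LpMonotonicityFormula} depends on $b,r_0,p$ and on $\|H_g\|_{L^p}$, which would destroy the claim $C=C(k)$. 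The correct reading of Lemma~\ref{lemmacomputation} (visible from its proof, and used explicitly in the paper's equation~\eqref{eq5.5}) is that there exists $\rho$ with $f(5\rho)\le\tfrac{5^k}{2}\rho_0\,g(\rho)$; with this, the $5$-dilated balls already carry the needed mass bound and no appeal to monotonicity is required. Once you make that correction your argument goes through, but it still costs you an intrinsic coarea formula that the paper's Fubini-only approach avoids.
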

\begin{proof}
Notice that, for Lemma \ref{lemmacondition2ps}, for all $0<\sigma<\rho<r_0$
\[
\frac1{\omega_k\sigma^k}\int_{\B{\xi}{\sigma}}h\dv\le\frac1{\omega_k\rho^k}\int_{\B{\xi}{\rho}}h\dv+\int_{\sigma}^{\rho}\frac1{s^k}\lp\int_{\B{\xi}{s}}\lmo\nabla^S_g h\rmo_g+h\lp\lmo H_g\rmo_g-c^*k\rp\dv\rp ds,
\]
but, since by hypothesis $h\in C_0^1(\B{\xi}{r_0})$ it remains valid for all $0<\sigma<\rho<\infty$, hence we can apply the Lemma \ref{lemmacomputation} with the choices
\begin{align*}
f(\rho):=&\frac1{\omega_k}\int_{\B{\xi}{\rho}}h\dv\\
g(\rho):=&\frac1{\omega_k}\int_{\B{\xi}{\rho}}\lp\lmo\nabla^S_g h\rmo_g+h\lp\lmo H_g\rmo_g-c^*k\rp \rp\dv,
\end{align*}
provided that $\xi\in\spt\|V\|$ and $h(\xi)\ge1$.\\
Thus for $\|V\|$-a.e $\xi\in\left\{x\in\spt \|V\|:h(x)\ge1\right\}$ we have
\[
\rho<2\lp\frac1{\omega_k}\int_{M}h\dv\rp^{\frac1k},
\]
and
\begin{equation}\label{eq5.5}
\int_{\B{\xi}{5\rho}} h\dv\le5^k\lp\frac1{\omega_k}\int_{M}h\dv\rp^{\frac1k}\int_{\B{\xi}{\rho}} \lp\lmo\nabla^S_g h\rmo_g+h\lp\lmo H_g\rmo_g-c^*k\rp\rp\dv.
\end{equation}
Now, since $\left\{ x\in\spt\|V|:h(x)\ge1\right\}\subset\spt\|V\|\cap\spt h$ which is compact, in virtue of  Vitali's five Lemma (cf. \cite{Simon} Theorem 3.3 pg. 11) we can select a collection of balls
\[
\left\{\B{\xi_i}{\rho_i}\right\}_{i\subset\Lambda},
\]
such that, for all $i\in\Lambda$, $\xi_i\subset\left\{x\in\spt\|V\|:h(x)\ge1\right\}$ and, there exists $\Lambda'\subset\Lambda$ such that for every $i\ne j$, $i,j\in\Lambda'$ it holds $\B{\xi_i}{\rho_i}\cap\B{\xi_j}{\rho_j}=\emptyset$ and 
\[
\left\{x\in\spt\|V\|:h(x)\ge1\right\}\subset\bigcup_{j\in\Lambda'}\B{\xi_{i_j}}{5\rho_{i_j}}=\bigcup_{i\in\Lambda}\B{\xi_i}{\rho_i}.
\]
Then, by \eqref{eq5.5}
\small
\begin{equation}\label{eq5.6}
\begin{split}
\int_{\left\{h\ge1\right\}\cap\spt\|V\|}h\dv&
\le\sum_{j\in\Lambda'}\int_{\B{\xi_{i_j}}{5\rho_{i_j}}}h\dv\\
&\le\sum_{j\in\Lambda'}\lp5^k\lp\frac1{\omega_k}\int_Mh\dv\rp^{\frac1k}\int_{\B{\xi_{i_j}}{\rho_{i_j}}}\lp\lmo\nabla^S_g h\rmo_g+h\lp\lmo H\rmo_g-c^*k\rp\rp\dv\rp\\
&\le5^k\lp\frac1{\omega_k}\int_{M}h\dv\rp^{\frac1k}\sum_{j\in\Lambda'}\int_{\B{\xi_{i_j}}{\rho_{i_j}}}\lp\lmo\nabla^S_g h\rmo_g+h\lp\lmo H\rmo_g-c^*k\rp\rp\dv\\
&\le5^k\lp\frac1{\omega_k}\int_{M}h\dv\rp^{\frac1k}\int_M\lp\lmo\nabla^S_g h\rmo_g+h\lp\lmo H\rmo_g-c^*k\rp\rp\dv.
\end{split}
\end{equation}
\normalsize
Let $\gamma_\varepsilon\in C^1(\R)$ a non-decreasing function such that, for given $\varepsilon>0$,
\[
\gamma_\varepsilon(t):=
\begin{cases}
1,\quad if\ 0<\varepsilon<t\\
0,\quad if\ t\le0, 
\end{cases}
\]
and consider, for $t_0\ge0$ given,
\[
f(x):=\gamma_\varepsilon\lp h(x)-t_0\rp,
\]
then, applying \eqref{eq5.6} with $f$ instead of $h$, and setting $S_{\alpha}:=\left\{x\in S:h(x)>\alpha\right\}$, we get,
\begin{align*}
\|V\|\lp S_{t_0+\varepsilon}\rp&=\int_{\left\{x\in M:h(x)-t_0>\varepsilon\right\}}h\dv\\
&=\int_{\left\{x\in S:f(x)\ge1\right\}}f\dv\\
&\le5^k\lp\frac1{\omega_k}\int_Sf\dv\rp^{\frac1k}\int_{S}\lp\gamma_\varepsilon'(h-t_0)\lmo\nabla^S_g h\rmo_g+f\lp\lmo H_g\rmo_g-c^*k\rp\rp\dv\\
&\le\frac{5^k}{\omega_k^{\frac1k}}\lp\|V\|(S_{t_0})\rp^{\frac1k}\int_{S}\lp\gamma_\varepsilon'(h-t_0)\lmo\nabla^S_g h\rmo_g+f\lp\lmo H_g\rmo_g-c^*k\rp\rp\dv,
\end{align*}
now, multiplying by $\lp t_0+\varepsilon\rp^{\frac1{k-1}}$ we have,
\tiny
\begin{align*}
\lp t_0+\varepsilon\rp^{\frac1{k-1}}\|V\|\lp S_{t_0+\varepsilon}\rp
&\le\frac{5^k}{\omega_k^{\frac1k}}\lp\lp t_0+\varepsilon\rp^{\frac k{k-1}}\|V\|(S_{t_0})\rp^{\frac1k}\int_{S}\lp\gamma_\varepsilon'(h-t_0)\lmo\nabla^S_g h\rmo_g+f\lp\lmo H\rmo_g-c^*k\rp\rp\dv\\
&=\frac{5^k}{\omega_k^{\frac1k}}\lp \int_{S_{t_0}}\lp t_0+\varepsilon\rp^{\frac k{k-1}}\dv\rp^{\frac1k}\int_{S}\lp\gamma_\varepsilon'(h-t_0)\lmo\nabla^S_g h\rmo_g+f\lp\lmo H\rmo_g-c^*k\rp\rp\dv\\
&\le\frac{5^k}{\omega_k^{\frac1k}}\lp \int_{S_{t_0}}\lp h+\varepsilon\rp^{\frac k{k-1}}\dv\rp^{\frac1k}\lp\int_{S}-\frac{\partial}{\partial t_0}\gamma_\varepsilon'\lp h-t_0\rp\lmo\nabla^S_g h\rmo_g\dv+\int_Sf\lp\lmo H\rmo_g-c^*k\rp\dv\rp\\
&\le\frac{5^k}{\omega_k^{\frac1k}}\lp \int_{S}\lp h+\varepsilon\rp^{\frac k{k-1}}\dv\rp^{\frac1k}\lp\int_{S}-\frac{\partial}{\partial t_0}\gamma_\varepsilon'\lp h-t_0\rp\lmo\nabla^S_g h\rmo_g\dv+\int_Sf\lp\lmo H\rmo_g-c^*k\rp\dv\rp.
\end{align*}
\normalsize
Integrating the above inequality on $t_0$ in the interval $]0,\infty[$, we have
\small
\begin{multline*}
\int_{0}^{\infty}\lp t_0+\varepsilon\rp^{\frac1{k-1}}\|V\|\lp S_{t_0+\varepsilon}\rp dt_0\le\\
\frac{5^k}{\omega_k^{\frac1k}}\lp \int_{S}\lp h+\varepsilon\rp^{\frac k{k-1}}\dv\rp^{\frac1k}
\int_0^{\infty}\lp\int_{S}-\frac{\partial}{\partial t_0}\gamma_\varepsilon'\lp h-t_0\rp\lmo\nabla^S_g h\rmo_g\dv+\int_Sf\lp\lmo H\rmo_g-c^*k\rp\dv\rp dt_0.
\end{multline*}
\normalsize
First, notice that the left hand side, by the Lemma \ref{riem4.3.5}, is equal to,
\[
\int_0^{\infty}\lp t_0+\varepsilon\rp^{\frac1{k-1}}\|V\|\lp S_{t_0+\varepsilon}\rp dt_0=\frac{k-1}k\int_{S_0}\lp h-\varepsilon\rp^{\frac k{k-1}}\dv=\frac{k-1}k\int_{\left\{x\in S:h(x)>\varepsilon \right\}}\lp h-\varepsilon\rp^{\frac k{k-1}}\dv.
\]
on the other hand, we can estimate the right hand side as follows, first by Fubini's Theorem
\begin{align*}
\int_0^{\infty}\lp\int_{S}-\frac{\partial}{\partial t_0}\gamma_\varepsilon'\lp h(x)-t_0\rp\lmo\nabla^S_g h\rmo_g\dv\rp dt_0&=\int_S\lmo\nabla^S_g h\rmo_g\lp-\frac d{dt}\int_0^{\infty}\gamma_\varepsilon\lp h(x)-t_0\rp dt_0\rp\dv\\
&=\int_S\lmo\nabla^S_g h\rmo_g\lp\gamma_\varepsilon(h(x))\rp\dv\\
&\le\int_S\lmo\nabla^S_g h(x)\rmo\dv.
\end{align*}
And, again by Fubini's Theorem,
\begin{align*}
\int_0^{\infty}\lp\int_Sf(x)\lp\lmo H_g\rmo_g-c^*k\rp\dv(x) \rp dt&=\int_S\lp\lmo H_g\rmo_g-c^*k\rp\lp\int_0^{\infty}\gamma_\varepsilon(h(x)-t_0)dt_0\rp\dv\\
&=\int_S\lp\lmo H_g\rmo_g-c^*k\rp\lp\int_{h(x)}^{-\infty}-\gamma_\varepsilon(w)dw\rp\dv\\
&\le\int_Sh\lp\lmo H_g\rmo_g-c^*k\rp\dv.
\end{align*}
Therefore
\[
\frac{k-1}k\int_{M_{\varepsilon}}\lp h-\varepsilon\rp^{\frac k{k-1}}\dv\le\frac{5^k}{\omega_k^{\frac1k}}\lp\int_M\lp h+\varepsilon\rp^{\frac k{k-1}}\dv\rp^{\frac1k}\int_M\lp\lmo\nabla^S_g h\rmo_g+h\lp\lmo H_g\rmo_g-c^*k\rp\rp\dv,
\]
then, letting $\varepsilon\to0$ we obtain
\[
\lp\int_Sh^{\frac k{k-1}}\dv\rp^{\frac{k-1}{k}}\le C\int_S\lp\lmo\nabla^S_g h\rmo_g+h\lp\lmo H_g\rmo_g-c^*k\rp\rp\dv,
\]
where
\[
C:=C(k):=\frac{k5^k}{(k-1)\omega_k^{\frac1k}}.
\]
\end{proof}

 \newpage
      \markboth{Bibliography}{Bibliography}
      \bibliographystyle{alpha}
      \bibliography{bibliografia}
      \addcontentsline{toc}{section}{\numberline{}Bibliography}

\end{document}